\documentclass{article}
\usepackage[english]{babel}
\usepackage{amsmath,amssymb,amsfonts,amsthm,ulem}
\usepackage[inner=3.4cm,outer=3.65cm,bottom=3cm]{geometry}

\usepackage{hyperref}
\usepackage{color}
\usepackage{verbatim}

\theoremstyle{remark}
\newtheorem{remark}{Remark}[section]
\theoremstyle{definition}
\newtheorem{theorem}{Theorem}[section]
\newtheorem{definition}[theorem]{Definition}

\newtheorem{proposition}[theorem]{Proposition}
\newtheorem{lemma}[theorem]{Lemma}
\newtheorem{corollary}[theorem]{Corollary}
\newtheorem{hypothesis}[theorem]{Hypothesis}

\DeclareMathOperator{\deriv}{d}
\newcommand{\gammaciapo}{\widehat{\gamma}}

\DeclareMathOperator{\TV}{TV}
\newcommand{\ddt}{\frac{\deriv\!{}}{\dit}}
\newcommand{\bn}{\boldsymbol{n}}
\newcommand{\dit}{\deriv\!t}
\newcommand{\duav}[1]{\langle{#1}\rangle}

\newcommand{\dn}{\partial_{\bn}}
\DeclareMathOperator{\Bv}{BV}
\DeclareMathOperator{\dive}{div}

\DeclareMathOperator{\R}{\mathbb{R}}

\DeclareMathOperator{\C}{\mathcal{C}}

\DeclareMathOperator{\N}{\mathbb{N}}

\newcommand{\ov}[1]{\overline{#1}}
\DeclareMathOperator{\sign}{sign}
\DeclareMathOperator{\Id}{Id}
\newcommand{\ra}{\rightarrow}
\newcommand{\de}{\mathrm{d}}

\newcommand{\f}[1]{{\pmb{ #1}}}
\DeclareMathOperator{\di}{\nabla\cdot}

\newcommand{\ts}{\tilde{\sigma}}
\newcommand{\tp}{\tilde{\varphi}}
\newcommand{\tmu}{\tilde{\mu}}
\newcommand{\rs}{{\sigma_{\varepsilon}}}
\newcommand{\rp}{{\varphi_{\varepsilon}}}
\newcommand{\rmu}{{\mu_{\varepsilon}}}

\renewcommand{\t}{\partial_t}

\newcommand{\CA}{C_\alpha}

\newcommand{\calE}{\mathcal{E}}
\newcommand{\calF}{\mathcal{F}}
\newcommand{\calD}{\mathcal{D}}
\newcommand{\calR}{\mathcal{R}}

\newcommand{\RR}{\mathbb{R}}

\newcommand{\NN}{\mathbb{N}}

\newcommand{\fhi}{\varphi}

\newcommand{\epsi}{\varepsilon}
\newcommand{\ee}{_\varepsilon}

\newcommand{\io}{\int_\Omega}

\date{Version of the draft: \today}
\begin{document}

\begin{center}
		
		{\LARGE \textbf{Weak solutions and weak-strong uniqueness for \\
        a Cahn-Hilliard type model with chemotaxis}}

		\vskip0.5cm
		
		{\large\textsc{Robert Lasarzik$^1$}} \\
		{\normalsize e-mail: \texttt{robert.lasarzik@wias-berlin.de}} \\
		\vskip0.35cm
		
		{\large\textsc{Elisabetta Rocca$^2$}} \\
		{\normalsize e-mail: \texttt{elisabetta.rocca@unipv.it}} \\
		\vskip0.35cm

        {\large\textsc{Giulio Schimperna$^2$}} \\
		{\normalsize e-mail: \texttt{giulio.schimperna@unipv.it}} \\
		\vskip0.35cm
		
		{\footnotesize $^1$Weierstraß-Institut,
Mohrenstraße 39,
10117 Berlin}
		\vskip0.1cm

        {\footnotesize$^2$Department of Mathematics ``F. Casorati'', University of Pavia \& \\IMATI-C.N.R., 
          Via Ferrata 5, 27100 Pavia, Italy}
        \vskip0.5cm
		
	\end{center}
	

\begin{abstract}
\noindent
We prove existence of weak solutions and weak-strong uniqueness for a mathematical model which couples the evolution of a phase-parameter $\fhi$ satisfying a Cahn-Hilliard type relation  with the one of an additional variable $\sigma$ influencing the phase separation process. The main application of the model refers to cancer growth processes, where $\sigma$ may represent the concentration of a chemical substance affecting the evolution of the tumor, and is governed by a nonlinear parabolic equation characterized by a cross-diffusion term alike that occurring in the Keller-Segel model for chemotaxis. This term is also responsible for the most relevant difficulties in the mathematical analysis of the system. 
Complementing previous results on the model, we  prove here global in time existence for a very weak notion of solution to which a suitable energy imbalance and a 
logarithmic inequality for the nutrient are added. Noting that the system also admits local 
in time ``strong'' solutions, we can also exhibit a weak-strong uniqueness result whose 
proof exploits in an essential way the entropy-type inequality satisfied by weak solutions.
	\vskip3mm

		\noindent {\bf Key words:} Cahn-Hilliard equation, chemotaxis, Keller-Segel model, weak solutions, 
        weak-strong uniqueness, tumor growth models.

		\vskip3mm 
		
		\noindent {\bf AMS (MOS) Subject Classification:} 
        35K61, 
        35D30, 
        35Q92, 
        92C50. 
\end{abstract}
\section{Introduction}

In this article, we consider the PDE system
\begin{subequations}\label{eq}
\begin{align}
  \t \varphi - \Delta \mu = 0 ,\qquad \mu = - \Delta \varphi + F'(\varphi) - \chi \sigma, 
   &\quad \text{in }\,\Omega \times (0,T),\label{eq:1}\\
  \t \sigma - \dive (\sigma \nabla (\ln \sigma + \chi ( 1-\varphi )) ) = \alpha(\varphi, \sigma) \sigma,
    & \quad \text{in }\Omega \times (0,T),\label{eq:2}
\end{align}
equipped with the boundary conditions 
\begin{align}\label{bound:1c} 
\f n \cdot \nabla \mu = 0 = \f n \cdot \nabla \varphi , \qquad
  \f n \cdot ( \sigma \nabla (\ln \sigma + \chi ( 1-\varphi ))) = 0, \quad \text{on }\,\partial\Omega \times (0,T)
\end{align}
and the initial conditions
\begin{align}\label{init:1d}
\varphi (0) = \varphi_0 ,\qquad\sigma(0) =\sigma_0, \quad \text{in } \Omega.
\end{align}
\end{subequations}
Here $\Omega$ is a smooth bounded domain of $\RR^d$, $d\in\{2,3\}$ (we could 
actually consider any dimension $d\geq 2$, however we prefer to state the results
in the physically meaningful cases $d=2,3$ for simplicity), $\f n$ stands for the outer 
normal unit vector to $\partial \Omega$, and $T>0$ is an arbitrarily large, but otherwise fixed, 
final time. In Problem~\eqref{eq}, $\fhi$ represents the order parameter
of a phase separation process and is subject to the Cahn-Hilliard dynamics prescribed by 
system~\eqref{eq:1}, where $\mu$ acts as an auxiliary variable representing the chemical
potential. The additional variable $\sigma$ denotes the concentration of another 
chemical substance affecting the phase separation process. 
The evolution of $\sigma$ is governed by the nonlinear
parabolic equation \eqref{eq:2}, which is characterized by a cross-diffusion term reminiscent
of that occurring in the Keller-Segel model for chemotaxis~\cite{KS}.

The function $F: \R \ra [0,\infty]$ appearing in~\eqref{eq:1}
represents a non-smooth configuration potential, assumed to be $\lambda$-convex (i.e., convex 
up to a quadratic perturbation). Even though more general
choices could be considered, for simplicity we shall directly assume $F$ to be given by a variant of
the Flory-Huggins configuration potential for phase separation, namely 
\begin{equation}\label{Flog}
F(r) = (1+r)\ln (1+r) + (1-r)\ln (1-r) - \frac{\lambda}{2} r^2 ,
\end{equation}
where $\lambda\ge 0$. In particular, the above choice enforces the variable $ \varphi $ 
to take values in the interval $ [-1,1]$ during the evolution, with $\fhi=\pm1$ corresponding
to pure configurations or phases.

The PDE system~\eqref{eq} can be derived  from 
the following free energy functional:
\begin{align}\label{freeenergy}
  {\cal E} (\fhi, \sigma)
    = \frac12 \int_\Omega |\nabla \fhi |^2 \, \de x
    + \io F(\fhi) \, \de x
      + \io \big( \sigma (\ln \sigma - 1) 
      + \chi  \sigma {(1-\fhi)} \big)\, \de x.
\end{align}
In particular, equation \eqref{eq:1} may be obtained as a balance
law by setting
\begin{align*}
  \fhi_t + \dive {\bf J}_\fhi = 0,
\end{align*}
where, as is typical for the Cahn--Hilliard equation, the flux ${\bf J}_\fhi$ 
is prescribed as ${\bf J}_\fhi= -  \nabla \mu$ and  the chemical potential $\mu$ is 
defined as the variational derivative of the free energy with respect to the order parameter,
namely $\mu:= \delta {\cal E}/\delta \fhi$.
Note that also equation \eqref{eq:2} can be obtained as a 
balance law for the nutrient flux ${\bf J}_\sigma $, i.e.,
\begin{align*}
   \sigma_t + \dive {\bf J}_\sigma = \alpha(\fhi,\sigma)\sigma, \quad 
     \text{with }\,{\bf J}_\sigma := - \sigma \nabla \mu_\sigma, \quad 
      \mu_\sigma  :=  \frac {\delta {\cal E}}{\delta \sigma}
            = \ln \sigma + \chi (1-\fhi),
\end{align*}
where the mobility function is proportional to $\sigma $, hence,
in particular, it degenerates (in fact linearly) as $\sigma\searrow 0$.
It is worth observing that this condition also guarantees the minimum principle for
$\sigma$, namely $\sigma$ keeps being nonnegative if that occurs
at the initial time.

Here, $\chi$ is the chemotactic coefficient,   which will be
assumed constant, and strictly positive in view of physical considerations 
(cf.\ \eqref{nutr:V} below), even though, mathematically, our
results would extend to the case $\chi<0$ as well. 
Moreover,  $\alpha$ is a sufficiently regular
function of the two variables $\fhi$ and $\sigma$.
A typical example for $\alpha$ is 
\begin{equation}\label{ex:alpha}
  \alpha(\fhi, \sigma) = h(\fhi)(1- \ell \sigma^p), \quad p\in(0,1],~~\ell>0, 
\end{equation}
where $h$ is a nonnegative and non-decreasing 
interpolation function such that $h(-1)=0$ and $h(1)=1$.
In particular, the above choice, in the case $p=1$, corresponds 
to a logistic behavior of the right-hand side of \eqref{eq:2}
with respect to $\sigma$, as was assumed, for example, in \cite{RSS}.
Regarding the potential \eqref{Flog}, we point out that, even if this choice is generally
considered  the physically most realistic one in the framework of Cahn-Hilliard type models,
it is also the source of a number of mathematical difficulties (cf., e.g., \cite{DD, KNP, MAIMS})
in view of its ``singular'' character. As first noted in \cite{RSS}, however, as one considers
the coupling with the Keller-Segel-like relation \eqref{eq:2}, the choice of a singular potential
is almost compulsory, as it automatically implies the boundedness of the phase parameter in between
$-1$ and $1$. In turn, such a property is crucial in order to guarantee the coercivity 
of the functional \eqref{freeenergy}, and in particular to control the coupling term in it. 

Let us note that the system~\eqref{eq} can be seen as a particular example in a wide class of 
tumor growth models accounting for chemotaxis effects (cf.~\cite{GHW,RSS,GiulioNew}
for further extensions to models also including the velocity field). In this setting,
the phase parameter $\fhi$ represents  the difference between the tumor cells and healthy 
cells volume fractions, and is normalized in such a way that the level sets $\{\fhi=1\}$ 
and $\{\fhi=-1\}$  describe the regions occupied by the pure (“tumor” and “healthy”) phases, 
respectively, whereas the variable $\sigma$ denotes the concentration of the nutrient (glucose 
or oxygen, for instance) responsible for tumor growth.

To our knowledge, the coupling between the Cahn-Hilliard system and the Keller-Segel-like
relation \eqref{eq:2} was first   suggested in \cite{agg}
(as one specific possibility among a wide class of models 
introduced there), and then proposed again, with a more specific
motivation, in \cite{RSS}. In the latter  paper,
the mass balance equation $\eqref{eq:1}_1$ also included a non-zero right-hand 
side $S(\fhi, \sigma)$ representing the mass source and describing how the nutrient supply affects 
the evolution of the tumor. In \cite{RSS},
existence of weak solutions was proved under suitable assumptions on $S$, but uniqueness 
was left as an open problem, at least for general $\alpha$
and in the class of weak solutions (while it was proven to hold for smoother 
solutions). This is why in the present paper we are willing to prove
weak-strong uniqueness of solutions, meaning that, if a weak and a strong solution of the
system stem from the same initial data, then they must coincide at least on the time interval where 
the local strong solution exists. Our results are proved only for zero right-hand side
in $\eqref{eq:1}_1$ (meaning that the tumor mass remains constant in time during
the evolution of the system); however, they may extend to the case of 
an ``affine linear'' source of the form
\begin{equation}\label{RHS}
  S(\fhi,\sigma)=-m\fhi + \ov{h},
\end{equation}
where $\ov{h}$ and $m$ are constants with $m>0$ and $|{\ov h}| < m$. 
On the other hand, as first observed in \cite{CahnHilliardOono}, for 
singular potentials the case of {\it nonlinear}\/ mass source terms 
is much more delicate to deal with from the point of view of uniqueness
(and of weak-strong uniqueness as well), because of the lack of a contractive
estimate for the spatial mean of $\fhi$ (which, instead, it is easy
to prove in the case of \eqref{RHS}).

From the physical point of view, the main justification for considering
our model can be observed by integrating the nutrient equation \eqref{eq:2} 
over an arbitrary   (smooth)  subdomain $V\subset\Omega$. Then, 
using the boundary conditions, one obtains
\begin{equation}\label{nutr:V}
  \ddt \int_V \sigma \, \de x
   = \int_{\partial V} \dn \sigma \, \de S
   + \int_V \alpha(\fhi,\sigma) \sigma \, \de x
   - \chi \int_{\partial V} \sigma \dn \fhi \, \de S;
\end{equation}
namely, as physically expected, the flux of nutrients across $\partial V$,
driven by consumption by tumor cells, is proportional to the actual value of
$\sigma$: the more nutrient is present, the more it may flow away.
 
Actually, it was this simple observation to lead the 
authors of \cite{RSS} to propose the Keller-Segel-like relation \eqref{eq:2}
in place of the more standard expression
\begin{equation}\label{standard-sigma}
  \partial_t\sigma-\Delta(\sigma-\chi\fhi)=\alpha(\fhi,\sigma)\sigma,
\end{equation}
which was considered in most of the previous papers dealing with
diffuse-interface models for tumor growth (cf., e.g., \cite{FLR17, GL17, GL17bis, GT} and 
references therein). We may notice that, if we integrate \eqref{standard-sigma} 
over the reference domain $V$, the factor $\sigma$ in the last integrand in \eqref{nutr:V} disappears,
meaning that the flux of nutrients across $\partial V$ caused by consumption 
by tumor cells is independent of the actual value of $\sigma$. This behavior is
rather questionable from the physical viewpoint and can also result in the loss
of the minimum principle; namely, $\sigma$ satisfying \eqref{standard-sigma}
may well become negative at some point, which is something unexpected
as $\sigma$ represents a concentration. As already observed, such 
a behavior cannot occur in the case of \eqref{eq:2}.

Let us comment a bit more on recent results in the literature related to system 
\eqref{eq}. As noted above, the idea of considering chemotaxis/transport effects 
in equation \eqref{eq:2} comes from the recent paper \cite{RSS}, where various
properties like existence, uniqueness and regularity of solutions,
also depending on the space dimension $d=2,3$, were proved in the case
of a logistic source of the form \eqref{ex:alpha}.
Regarding more specifically the case of dimension $d=2$, still in the case of a 
logistic source in \eqref{eq:2} and also considering a mass source in $\eqref{eq:1}_1$,
more recently some additional results concerning regularity and uniqueness 
have been proved in \cite{GHW}, where transport effects driven 
by a macroscopic velocity field ${\boldsymbol u}$ assumed to satisfy a Brinkman-type law
were added to the model. Velocity effects were also considered in the paper~\cite{GiulioNew}, 
dealing with the case where \eqref{eq:2} accounts for a nonlinear (i.e., power-like) 
{\sl chemotactic sensitivity}\/ $\gamma$; namely, one has
\begin{equation}\label{sigma-power}
  \partial_t\sigma -\Delta\sigma + \chi\dive{(\gamma(\sigma)\nabla\fhi)}
    = \alpha(\fhi,\sigma)\sigma,
\end{equation}
where $\gamma$ is assumed to behave like $\sigma$ for $\sigma\sim 0$ and 
like $\sigma^{2-p}$, $p\in (12/11, 2]$, for large $\sigma$. It is worth observing
that the resulting ``subcritical'' character of the cross-diffusion term 
in \cite{GiulioNew} allows for the choice of a bounded $\alpha$, i.e.\ to
get rid of the stabilizing effect coming from a source of logistic type.
The model considered in \cite{GiulioNew} in the specific case $p=2$,
and the existence result proved there, will also serve as an approximation for our 
system \eqref{eq}. We also quote the recent paper \cite{SchimpSeg}, where the long-time behavior 
of the system considered in \cite{RSS} (including in particular the logistic source 
in \eqref{eq:2}, but neglecting the mass source in \eqref{eq:1}$_1$) was analyzed
from the point of view of infinite-dimensional dynamical systems proving the existence
of the global attractor for the associated evolutionary process. 
Finally, we would like to mention the recent 
contribution \cite{AS}, where the authors introduce a different approach to 
multiphase tumor growth models including chemotaxis in the first Cahn-Hilliard
equation $\eqref{eq:1}_1$, so that   it is the tumor mass that is ``transported''  by the nutrient in their setting. 

Comparing our specific results with the mathematical literature in the field, 
we point out
that, while weak-strong uniqueness has been extensively investigated in the context of
phase-field models (cf., e.g. \cite{ourpreviousone}), also in connection to liquid crystal flows 
(cf., e.g., \cite{EL18}) or damage processes
(cf., e.g., \cite{LRR24}), to our knowledge the present paper is one of the first 
works where a phase field model for tumor growth including a ``singular'' potential
is addressed under this perspective.

In order to briefly introduce our mathematical approach, we point out once
more that the quadratic cross diffusion term occurring in \eqref{eq:2} 
and ``inherited'' from the Keller--Segel (KS) model \cite{KS}
represents the main source of analytical complications. In particular, at least in
three space dimensions, global existence for system \eqref{eq}
{\sl without the logistic degradation term in \eqref{eq:2}}, to our knowledge
was still an open problem. Actually, the results proved in \cite{RSS} strongly
rely on the presence of the logistic source, whereas the ``nonlinear
sensitivity'' considered in \cite{GiulioNew} prescribes the cross-diffusion
term to have a ``subcritical'' character. In the present contribution, we
can give a positive answer to the problem of existence of 
solutions for \eqref{eq} (i.e., considering ``linear'' sensitivity
and omitting logistic sources); our notion of solution, however, will 
be weaker compared to that considered, say, in \cite{RSS,GiulioNew}. 
The key point in our argument consists in considering a sort
of renormalized version of \eqref{eq:2}, which, roughly speaking,
is obtained by dividing \eqref{eq:2} by $\sigma$ and rearranging the 
diffusion terms by means of suitable chain-rule formulas; see, for instance, \eqref{giu:a5} below.
This procedure, in turn, is admissible thanks to a reinforced version of the 
minimum principle for $\sigma$; namely, we can show that $\sigma$ is
{\sl strictly}\/ positive almost everywhere and that $\ln \sigma\in L^1(\Omega)$
at the time $t>0$, provided that the same holds at the initial time. 
Working on the renormalized version
of \eqref{eq:2} and using a suitable uniform integrability argument,
we can actually prove strong $L^1$- (hence pointwise) convergence of $\ln\sigma$,
which is a key step in order to identify the cross-diffusion term in
the limit. It is worth noting that, due to the supercritical behavior
of that term, even for very smooth initial data, we 
do not expect that additional regularity (or parabolic smoothing) properties
for $\sigma$ could be proved. We also remark that the notion of weak solution considered here
is complemented by an ``integrated'' entropy inequality 
(cf.~\eqref{weakentropy} below), 
which is crucial for the sake of proving our weak-strong uniqueness result. 

\medskip

\noindent
{\bf Plan of the paper.} In Section~\ref{sec:main}, we introduce some notation and
state our three main results: existence of (global in time) weak solutions, 
existence of (local in time) strong solutions and weak-strong uniqueness.
Then, in Section~\ref{sec:pre} we prove some preliminary lemmas which are needed in
order to clarify the concept of weak solution, especially for what concerns regularity
properties and validity of the  energy and entropy inequalities.
Then, existence of weak solutions is proved in Section~\ref{sec:global} by 
using the system with subcritical sensitivity as an approximation of our model, deriving some 
a-priori estimates uniform with respect to the regularization parameter, 
and then passing to the limit by means of compactness arguments. Next, in 
Section~\ref{sec:local} we prove existence of local-in-time strong 
solutions by obtaining a further set of a-priori regularity estimates and 
using a local version of Gr\"onwall's lemma. Finally, in the last Section~\ref{sec:weakstrong} 
the weak-strong uniqueness result is proved by deriving a suitable relative energy inequality,
entailing that a weak and a strong solution originating from the same initial datum must 
coincide on the largest time interval in which they simultaneously exist.

%


\section{Assumptions and main results}
\label{sec:main}


\subsection{Notation}

We start introducing a set of notation which will be useful in
order to rigorously formulate our mathematical results. 
Letting $\Omega$ be a smooth bounded domain of $\RR^d$, $d\in\{2,3\}$, 
we set $H := L^2(\Omega)$ and $V := H^1(\Omega)$.
We will generally write $H$ in place of $H^d$ (with similar notation for other spaces), 
whenever vector-valued functions are considered. We denote by $(\cdot,\cdot)$ the
standard scalar product of~$H$ and by $\| \cdot \|$ the associated Hilbert norm.  
Moreover, we equip $V$ with the usual norm $\|\cdot\|_V^2 = \|\cdot\|^2 + \|\nabla \cdot\|^2$.
Identifying $H$ with its dual space $H'$ by means of the scalar
product introduced above, we obtain the chain of continuous
and dense embeddings $V\subset H \subset V'$.
We indicate by $\duav{\cdot,\cdot}$ the duality pairing between $V'$ and $V$, 
or, more generally, between $X'$ and $X$, where $X$ is some Banach space continuously
and densely embedded into $H$. Next, for a given Banach space $X$, the space
$\C_w([0,T];X )$ consists of the functions on $[0,T]$ taking values in $X$ 
that are continuous with respect to the weak topology of $X$.

The total variation of a function $E:[0,\infty)\ra \R$ is given by 
$$ 
  \TV(E,[0,T]):= \sup_{0=t_0<\ldots <t_n=T} \sum_{k=1}^n \lvert E(t_{k-1})-E(t_k) \rvert, 
$$
where the supremum is taken over all finite partitions of the interval $[0,T]$. 
We denote the space of all bounded functions of bounded total variations on $[0,T]$ 
by~$\Bv([0,T])$. We recall that the left- and right-hand limit of a function $E\in\Bv([0,T])$ 
are well defined in any point in time $t\in[0,T]$ and will be denoted by $ E(t-)$ and $E(t+)$, respectively. 
Similarly, we will denote by $\Bv([0,T];X)$ the Banach space of the functions that have
bounded total variation with respect to the norm of the generic Banach space $X$.
By $\mathcal{M}^+(\ov\Omega \times [0,T])$, we denote the space of nonnegative 
Borel measures  on $\ov\Omega \times [0,T]$. Finally,
for a real number $a\in \R$, $a_+$ and $a_-$ will denote its non-negative 
and non-positive parts, respectively.


\subsection{Weak solutions}
\label{subsec:weak}

Our main assumptions on the coefficients of the problem can be summarized as follows:
\begin{hypothesis}\label{hypo:weak}
    We assume $\Omega $ to be a regular bounded domain in $\R^d$, for $d\in \{2,3\}$;
    moreover, we assume  the chemotactic response coefficient $\chi$ to be strictly 
    positive and the function $\alpha : \R^2 \to \R$ to be Lipschitz continuous
    and to satisfy the uniform boundedness condition   
    $\underline\alpha \leq \alpha(r,s) \leq \ov \alpha$ for every $(r,s)\in \R^2$ 
    and for some constants (not necessarily positive) $\underline\alpha \leq \ov \alpha$.
    Furthermore, we assume the potential $F$ to be given by~\eqref{Flog} with 
    $\lambda \geq 0$. For later convenience, we denote as $\beta$
    the ``monotone part'' of $F'$ (or, more precisely, of the 
    {\sl subdifferential}\ $\partial F$). Namely, we set
    \begin{equation}\label{defi:beta}
        \beta(r) = \ln(1+r) - \ln(1-r), \quad r\in(-1,1),
    \end{equation}
    in such a way that $F'(r)=\beta(r)-\lambda r$.
\end{hypothesis}
\noindent%
It is also worth detailing here our assumptions on the initial data:
\begin{hypothesis}\label{hypo:data}
 We assume the following conditions:
 \begin{align}\label{hp:fhi0}
   & \fhi_0 \in V, \qquad F(\fhi_0) \in L^1(\Omega), 
    \qquad \frac1{|\Omega|}\io \fhi_0\,\de x \in (-1,1),\\
  \label{hp:sigma0}
   & \sigma_0 \in L^1(\Omega), \quad \sigma_0>0~~\text{a.e.\ in }\,\Omega, 
    \quad \sigma_0 \ln (1 + \sigma_0) \in L^1(\Omega),\\
  \label{hp:sigma00}
   & \ln \sigma_0\in L^1(\Omega),
     \quad (1 + (\ln\sigma_0)_-) \ln (1 + (\ln \sigma_0)_-) 
      \in L^1(\Omega).
 \end{align}
\end{hypothesis}
\noindent%
\begin{remark}\label{rem:s0}
As the hypotheses on $\sigma_0$ are a bit technical, some words of explanation
are in order. Basically, \eqref{hp:sigma0} specifies the ``behavior at infinity''
of $\sigma_0$, which has to be ``slightly better than $L^1$''. We will actually
prove that this uniform integrability property is maintained in time and 
is very useful to get strong $L^1$-convergence. Relation \eqref{hp:sigma00}
has a similar role and in a sense specifies the ``behavior at $0$'' of $\sigma_0$. 
Actually, as our entropy formulation is written in terms of the auxiliary variable
$\ln\sigma$, condition \eqref{hp:sigma00}, which is also maintained in
time, ensures some uniform integrability of $\ln\sigma$, which plays a role
when we take the limit in the entropy relation.
\end{remark}
\begin{remark}\label{rem:alpha}
 Considering the behavior of $\alpha$ for large $\sigma$, it may be worth observing that, 
 in our results, we only need that the {\sl positive} part of $\alpha$ stays bounded; indeed, 
 a fast growth at infinity of the negative part of $\alpha$ (e.g.~as in the logistic
 case \eqref{ex:alpha}) would have a stabilizing effect and guarantee additional
 a-priori summability. As that case has already been considered in detail 
 in \cite{GHW,RSS}, we preferred to assume directly $\alpha$ bounded, 
 which reduces a bit the technical complications in the proofs.
\end{remark}
\noindent%
Recalling (cf.~\eqref{freeenergy}) the energy functional associated 
to system \eqref{eq}, i.e.,
\begin{align}\label{energy}
  \mathcal{E}(\varphi , \sigma ) := \int_\Omega \frac{1}{2}| \nabla \varphi|^2 
    + F (\varphi) +\chi  (1-\varphi)\sigma + \sigma (\ln \sigma - 1) \, \de x,
\end{align}
we can now detail our notion of weak solution to system \eqref{eq}:
\begin{definition}\label{def:weak}
A triple $(\varphi,\mu,\sigma)$ defined over $\Omega\times(0,T)$ 
is called a weak solution to 
the system~\eqref{eq}, if it satisfies the regularity properties 
\begin{subequations}\label{weak:reg}
\begin{align}
  & \mu \in L^2(0,T;V), \label{reg:mu}\\
  & \varphi (\cdot,\cdot) \in (-1,1) \text{\ \ and }
  \sigma (\cdot,\cdot) >0 \text{\ \ a.e.~in }\Omega \times (0,T), \label{reg:phi0}\\
  & \varphi \in \C_w([0,T];V) \cap H^1(0,T;V'), \label{reg:phi}\\
  & 
  \Delta\varphi \in L^1(0,T;L^1(\Omega)), %
  \label{reg:Dphi}\\
  & \ln \sigma \in L^\infty(0,T;L^1(\Omega)) 
  \cap L^2(0,T;V) 
       \cap \mathrm{BV} (0,T; (W^{1,p}(\Omega))^*) \ \text{ where }p > d, \label{red:ln}\\
  & \sigma \in \C_w{([0,T];L^1(\Omega))} \cap H^1(0,T;(W^{2,p}(\Omega))^*)
    \ \text{ where }\, p > d,\label{reg:timesig}\\
  & \sigma (\ln \sigma +1) \in L^\infty( 0,T ; L^1(\Omega)), \label{reg:sig}\\ 
  & \beta(\varphi) \ln (1 + |\beta(\varphi)| )  \in L^1(0,T;L^1(\Omega)), 
   \qquad 
   \beta(\fhi) \in L^2(0,T;L^1(\Omega)), 
   \label{reg:Fprime}\\
 \label{reg:nonl}
  & \sqrt{\sigma } \nabla ( \ln \sigma + \chi (1-\varphi)) \in L^2(0,T;H),
\end{align}
\end{subequations}
and the weak formulations 
\begin{subequations}\label{weak:def}
\begin{align}
  \langle \t  \varphi ,  \psi \rangle\,   
  +  \int_\Omega \nabla \mu \cdot \nabla \psi \, \de x \,  & = 0 && \text{a.e.~in }(0,T) ,\label{weak:phi}\\
 \langle \t \sigma , \vartheta \rangle 
   + \int_\Omega \sigma \nabla ( \ln \sigma + \chi ( 1- \varphi)) \cdot \nabla \vartheta \,\de x \, 
   & = \int_\Omega  \alpha (\varphi,\sigma) \sigma \vartheta \, \de x && \text{a.e.~in }(0,T) \label{weak:sig}
\end{align}
are fulfilled for any $\psi \in V$ and 
$\vartheta \in W^{2,p}( \Omega)$, $p>d$,
together with the pointwise relation 
\begin{align}\label{weak:mu}
  \mu = - \Delta \varphi + F' ( \varphi) - \chi \sigma \quad 
   \text{a.e.~in }\,\Omega \times (0,T) 
\end{align}
and the additional boundary condition (in the sense of traces on $\partial\Omega$)
\begin{align}
  \dn \fhi = 0 \quad \text{a.e.~in }\, (0,T). \label{bou:fhi}
\end{align}
 Moreover, there exists a computable 
number  $\zeta\in[0,\infty) $,  depending only on the assigned 
data of the problem (and in particular on the initial data), such that 
the entropy inequality holds in the following sense:
\begin{multline}\label{weakentropy}
     - \langle \ln \sigma , \theta \rangle \big|_0^T  
      + \int_0^T \int_\Omega \theta | \nabla \ln \sigma|^2
          - \nabla \ln \sigma \cdot \nabla \theta 
          - \chi \theta \nabla\ln \sigma \cdot \nabla \varphi 
          + \chi \nabla \varphi \cdot \nabla \theta \, \de x \, \de t\\
    + \| \theta_-\|_{\C(\ov\Omega\times [0,T])} 
          \left[ \int_0^T \int_\Omega| \nabla \ln \sigma |^2 \, \de x\, \de t 
             - \zeta \right]   
   +\int_0^T\int_\Omega \alpha (\varphi , \sigma) \theta 
   + \ln \sigma \t \theta \, \de x \, \de t \leq 0,
\end{multline}%
 for every $\theta \in \C^1([0,T];W^{1,p}(\Omega))$, where $p>d$. 
  Similarly,  the energy inequality holds in the following sense: 
there exists a further computable and bounded
number $Z  \geq 0$, whose expression
depends only on the assigned data of the problem (and in particular on the initial data),
such that, for every $t\in[0,T]$, there holds
\begin{align}\nonumber
  & \calE(t) 
   + \int_0^t \io \sigma \big| \nabla ( \ln \sigma  + \chi (1-\fhi)) \big|^2
     + | \nabla \mu|^2 \,\de x \, \de s  \le \calE(0)\\
 \label{enin}
  & + \int_0^t \io \alpha(\fhi,\sigma) \big( \sigma \ln\sigma + \chi\sigma( 1 - \fhi) \big) \, \de x \, \de s
  + {\ov \alpha} \Big[ Z - \int_0^t \io \big( \sigma (\ln\sigma - 1) + 1 \big) \,\de x \, \de s \Big],
\end{align}
where the quantity in square brackets is omitted if $\ov\alpha \le 0$ 
(i.e., if $\alpha$ is nonpositive). 
Finally, the initial values $ (\varphi(0),\sigma(0))= ( \varphi_0,\sigma_0)$ 
are attained in the sense of traces. 
\end{subequations}
\end{definition}
\begin{remark}[Generalized formulation]\label{rem:weakstrong}
  We note that, in order to state a ``standard'' weak formulation,
  it would be  sufficient to consider relations~\eqref{weak:phi}-\eqref{bou:fhi} only.
  On the other hand, in Definition~\ref{def:weak} we also added the logarithmic inequality~\eqref{weakentropy} 
  as this will be used in an essential way in the proof of the weak-strong uniqueness result. 
  That the weak-strong uniqueness result relies on a logarithmic inequality is not uncommon 
  in the literature: we may refer, for instance, to a thermodynamically consistent phase-field 
  model~\cite{ourpreviousone}, or to a predator-prey model with cross diffusion~\cite{Luisa}.
  We also point out that the logarithmic inequality is formulated in a somehow ``non-standard'' 
  way, where in particular we wanted to consider test functions $\theta$ with no sign properties. 
  We will further comment on this aspect in Remark~\ref{rem:login} below. We also observe
  that, with a straighforward modification in the proof, it can be shown that the analogue
  of \eqref{weakentropy} holds on every subinterval $(0,t)$ (and not just on $(0,T)$). 
\end{remark}
\begin{theorem}[Global existence of weak solutions]\label{thm:global}
 Let Hypotheses~\ref{hypo:weak}, \ref{hypo:data} be satisfied and let
 $T > 0$ be an assigned reference time (of arbitrary magnitude). Then, 
 there exists at least one weak solution according 
 to Definition~\ref{def:weak} defined over $\Omega\times (0,T)$.
\end{theorem}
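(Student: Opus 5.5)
We obtain weak solutions as limits of solutions of a regularized problem in which the cross-diffusion term is made subcritical. Concretely, for $\epsi\in(0,1)$ we replace the sensitivity $\sigma$ in \eqref{eq:2} by a truncated/subcritical function $\gamma_\epsi(\sigma)$, behaving like $\sigma$ near $0$ and like $\sigma^{2-p}$ with $p=2$ (i.e., bounded) for large $\sigma$, for instance $\gamma_\epsi(\sigma)=\sigma/(1+\epsi\sigma)$. We also use regularized initial data $\sigma_{0,\epsi}$, bounded and bounded away from zero, converging to $\sigma_0$ in the topologies of \eqref{hp:sigma0}--\eqref{hp:sigma00}. For this problem the existence result of \cite{GiulioNew} (the case $p=2$) provides a global solution $(\rp,\rmu,\rs)$ with enough regularity to justify all the computations below. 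The only modification is that the free energy now contains the convex function $\Phi_\epsi$ with $\Phi_\epsi''=1/\gamma_\epsi$ in place of $\sigma(\ln\sigma-1)$.

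The core of the argument is a family of estimates uniform in $\epsi$, carried out in the following order.

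\emph{Energy estimate.} We test \eqref{eq:1} by $\t\rp$ and $\rmu$, and the $\sigma$-equation by $\Phi_\epsi'(\rs)+\chi(1-\rp)$. Here $\rp\in(-1,1)$ controls the coupling term, and $\alpha\le\ov\alpha$ controls the reaction contribution through a Gronwall argument. This yields bounds on $\|\nabla\rp\|$, $\io F(\rp)$, $\io \rs\ln\rs$, $\nabla\rmu\in L^2(0,T;H)$ and the dissipation $\sqrt{\gamma_\epsi(\rs)}\,\nabla(\Phi'_\epsi+\chi(1-\rp))\in L^2(0,T;H)$. The mean-value control of $\rmu$ follows from the standard estimate $\|\beta(\rp)\|_{L^1}\le C(1+\|\nabla\rmu\|)$, which uses that the mean of $\varphi_0$ lies in $(-1,1)$. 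Combined with the $L^1$ bound on $\rs$, this gives $\rmu\in L^2(0,T;V)$ and $\beta(\rp)\in L^2(0,T;L^1)$. Testing the relation for $\rmu$ by $\ln(1+|\beta(\rp)|)\sign\beta(\rp)$ gives the $L\log L$ bound in \eqref{reg:Fprime}, and then $\Delta\rp\in L^1(0,T;L^1)$ by comparison.

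\emph{Entropy estimate.} We test the $\sigma$-equation by $-1/\rs$. This produces $\io(-\ln\rs)$ together with $\int|\nabla\ln\rs|^2$ (up to the $\epsi$-correction). The cross term $\chi\int\nabla\ln\rs\cdot\nabla\rp$ is absorbed by Young's inequality thanks to the $L^\infty(V)$ bound on $\rp$. This gives $\ln\rs\in L^\infty(L^1)\cap L^2(V)$, and the integrated version produces the constant $\zeta$.

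\emph{Uniform integrability.} We test by $\ln(1+(\ln\rs)_-)$-type functions, which propagates the integrability in \eqref{hp:sigma00}, and we propagate \eqref{hp:sigma0} from the energy bound.

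We then pass to the limit by compactness. Aubin--Lions on $\rp$ gives strong convergence in $C([0,T];H)$ and a.e. Strong convergence of $\sigma$ is the main obstacle, since no gradient bound on $\sigma$ itself is available. We first try to exploit $\ln\rs\in L^2(V)$ together with a BV-in-time bound for $\ln\rs$ with values in $(W^{1,p})^*$. That bound is read off from the renormalized equation, i.e., the $\sigma$-equation divided by $\rs$, in which $|\nabla\ln\rs|^2$ appears as an $L^1$ (measure) term. A BV version of Aubin--Lions then yields $\ln\rs\to\ln\sigma$ strongly in $L^1$ and a.e. Hence $\rs\to\sigma$ a.e., and Vitali's theorem with the $\sigma\ln\sigma$ bound upgrades this to strong $L^1$ convergence. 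With these convergences we identify the flux $\rs\nabla(\ln\rs+\chi(1-\rp))$. We write it as $\sqrt{\rs}\cdot\sqrt{\rs}\nabla(\cdots)$: the first factor converges strongly in $L^2$ and the second weakly in $L^2$. This is sufficient against test functions $\vartheta\in W^{2,p}$, whose gradients are bounded. The limits in \eqref{weak:phi} and \eqref{weak:mu} are standard; $\rp$ stays away from $\pm1$ a.e. because $\beta(\varphi)\in L^1$, and $\sigma>0$ a.e. because $\ln\sigma\in L^1$.

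Finally, we obtain the inequalities \eqref{enin} and \eqref{weakentropy} by weak lower semicontinuity of the convex dissipation terms. For \eqref{weakentropy} with sign-changing $\theta$ we split $\theta=(\theta+\|\theta_-\|)-\|\theta_-\|$. The nonnegative part is handled by lower semicontinuity of $\int\theta|\nabla\ln\sigma|^2$. The constant part is compensated by the uniform bound $\int|\nabla\ln\rs|^2\le\zeta+o(1)$, which explains the bracketed term. The energy inequality is handled similarly with the constant $Z$, which absorbs the $\epsi$-error between $\Phi_\epsi$ and $\sigma(\ln\sigma-1)$ and the reaction term whose limit is only bounded from one side.
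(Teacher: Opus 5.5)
Your overall route is the paper's: the regularization $\rs/(1+\epsi\rs)$ from \cite{GiulioNew}, the energy, mean-value and $L\log L$ estimates, the test by $1/\rs$, BV-in-time compactness for $\ln\rs$, and the correction constants $\zeta$ and $Z$. However, two steps do not go through as you wrote them. The first is the limit in \eqref{weakentropy}. The cross term $\chi\int_0^T\io\theta\,(1+\epsi\rs)^{-1}\nabla\ln\rs\cdot\nabla\rp\,\de x\,\de t$ is a product of two sequences that you only know to converge weakly in $L^2(0,T;H)$. No strong convergence of $\nabla\ln\rs$ is available, which is exactly why only an inequality survives. Aubin--Lions gives $\rp\to\fhi$ strongly only in $C([0,T];H)$. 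Lower semicontinuity does not rescue the term either: completing the square in $\theta|\nabla\ln\sigma|^2-\chi\theta\nabla\ln\sigma\cdot\nabla\fhi$ leaves $-\frac{\chi^2}{4}\theta|\nabla\fhi|^2$, which is upper, not lower, semicontinuous under weak convergence. What you need is $\rp\to\fhi$ strongly in $L^2(0,T;V)$, cf.\ \eqref{co:22}. The paper gets it as follows:
\begin{itemize}
\item write \eqref{eq:1psi} for two parameters $\epsi_1,\epsi_2$ and test the difference by $\fhi_{\epsi_1}-\fhi_{\epsi_2}$;
\item drop the nonnegative monotone term $(\beta(\fhi_{\epsi_1})-\beta(\fhi_{\epsi_2}))(\fhi_{\epsi_1}-\fhi_{\epsi_2})$;
\item use that $\mu\ee$ is bounded in $L^2(0,T;H)$, that $\fhi\ee$ converges strongly in $L^2(0,T;H)$ with $|\fhi\ee|<1$, and that $\sigma\ee\to\sigma$ strongly in $L^1$.
\end{itemize}
This makes $\{\fhi\ee\}$ a Cauchy sequence in $L^2(0,T;V)$.

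The second is the uniform integrability of $(\ln\rs)_-$, which you assert after naming the right test function. Test \eqref{giu:a5b} by $\gamma(\ln\rs)$ with $\gamma(r)=-\ln(1+r_-)$. After Young's inequality, the chemotactic term $J_2$ in \eqref{co:24} produces $\io|\gamma(\ln\rs)|\,|\nabla\rp|^2\,\de x$. This cannot be bounded with $\nabla\rp\in L^\infty(0,T;H)$ alone, because $|\gamma(\ln\rs)|$ is unbounded. The paper integrates this term by parts, which gives $\io\gamma(\ln\rs)\,\rp\,\Delta\rp\,\de x$ plus a term controlled by $\|\nabla\ln\rs\|\,\|\nabla\rp\|$. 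It then applies the Fenchel--Young inequality for the pair $e^r$, $s(\ln s-1)$, with $e^{|\gamma(\ln\rs)|}=1+(\ln\rs)_-$. This requires the uniform bound \eqref{co:23}, i.e., $\Delta\rp$ bounded in $L\log L$, not merely in $L^1(0,T;L^1(\Omega))$ as you state. That bound does follow by comparison in the second relation of \eqref{eq:1approx}, from your $L\log L$ controls on $\beta(\rp)$ and $\rs$ and from $\rmu\in L^2(0,T;V)$. Still, pairing the logarithmic test function with the $L\log L$ control of $\Delta\rp$ is the real content of this step. Two smaller omissions:
\begin{itemize}
\item to identify the weak limit of the regularized dissipation factor $(\rs/(1+\epsi\rs))^{1/2}\nabla(\ln\rs+\epsi\rs+\chi(1-\rp))$ you need $\epsi\nabla\rs\rightharpoonup0$, i.e., the bound \eqref{apri:eps}, obtained by testing with $\epsi^2\rs$;
\item the terminal term $\langle\ln\rs(T),\theta(T)\rangle$ requires convergence at every time, which the paper gets from Helly's principle, cf.\ \eqref{hel}.
\end{itemize}
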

%
\noindent
Theorem~\ref{thm:global} will be proven in Section~\ref{sec:global}.
\begin{remark}[Pointwise identification in $L^1(\Omega)$]\label{rem:pointwise}
    We note that in the above Definition~\ref{def:weak}, the novel \textit{a priori} estimate~\eqref{st:31c} leading to the
    regularity $ |\ln \sigma| \ln (1+ | \ln \sigma|) \in L^\infty(0,T;L^1(\Omega))$ 
    is essential in order to deduce the first regularity 
    in~\eqref{red:ln}.   More precisely, we may observe that 
    the equiintegrability of $\ln \sigma(t)$ for $t\in[0,T]$    
    implies the existence of left and right limits (in the weak
    $L^1$-sense) at any $t\in [0,T]$ thanks  to the additional regularity 
    $\ln \sigma \in\Bv(0,T;(W^{1,p}(\Omega))^*)$. 
    Indeed, consider a sequence  $ \{ t_n\} _{n\in\N}\subset [t_0,T]$, 
    such that $ t_n \searrow t_0$. Due to the $\Bv(0,T;(W^{1,p}(\Omega))^*)$
    regularity, we know that 
    $$
          \langle \ln \sigma(t_n) , \theta\rangle \to \langle \ln \sigma(t_0+) , 
             \theta \rangle \quad \text{for all } \theta \in W^{1,p}(\Omega) 
    $$
    as $ t_n \searrow t_0$. 
    Then, the boundedness of
    $ \sup_{n\in\N}\int_{\Omega}  |\ln \sigma(t_n) |  
       \ln (1+ | \ln \sigma(t_n)|) \, \de x$ permits us to deduce that 
       there exists $\ell_{t_0+}\in L^1(\Omega)$ 
    such that 
    \begin{equation}
        \ln \sigma(t_n) \rightharpoonup \ell_{t_0+} \quad \text{in }L^1(\Omega)\,
    \end{equation}
     at least for a subsequence of $\{t_n\}\searrow t_0$. Then, the uniqueness 
     of the weak limit   implies  that $\ell_{t_0+} = \ln \sigma(t_0+) $. 
    The same holds, of course, for the left limit as $t_n\nearrow t_0-$. 
      Consequently, we can identify the limits  $ \ln \sigma (t_0\pm)$ 
    also as a function in $L^1(\Omega)$ and not only as a distribution 
    in $ (W^{1,p}(\Omega))^*$. 
\end{remark}
\noindent%
We now move to considering ``strong'' solutions. More precisely,
assuming that the initial data satisfy the
additional regularity conditions
\begin{align}\label{init:reg1}
  & \fhi_0 \in H^{2}_{\bn}(\Omega), \qquad
   - \Delta \fhi_0 + f(\fhi_0) \in V,\\
  \label{init:reg2}
  & \sigma_0 \in H^{2}_{\bn}(\Omega),
\end{align}
in Section~\ref{sec:local} we shall prove the following 
local existence result:
\begin{theorem}[Local existence of strong solutions]\label{thm:local}
 Let the assumptions of\/ {\rm Theorem~\ref{thm:global}} hold and let,
 in addition, \eqref{init:reg1}-\eqref{init:reg2} be verified. Then, there exist
 a time $T_0\in (0,T]$, depending on the initial data as well as on the 
 other assigned parameters of the system,
 and a triple $(\fhi,\mu,\sigma)$ enjoying the regularity
 conditions
 \begin{align}\label{strong:fhi}
  & \fhi \in W^{1,\infty}(0,T_0;V') \cap H^{1}(0,T_0;V)
   \cap L^\infty(0,T_0;W^{2,6}(\Omega)),\\
  \label{strong:Ffhi}
  & \beta(\fhi) \in L^\infty(0,T_0;L^{6}(\Omega)),\\
  \label{strong:mu}
  & \mu \in L^\infty(0,T_0;V),\\
  \label{strong:sigma}
  & \sigma \in W^{1,4}(0,T_0;L^3(\Omega)) 
   \cap L^4(0,T_0;W^{2,3}(\Omega)),\\
  \label{strong:sigma2}
  & \sigma \in L^\infty(0,T_0;L^\infty(\Omega)),
 \end{align}
 and satisfying relations\/
 \eqref{eq:1}-\eqref{eq:2} in the pointwise sense (almost everywhere in 
 $\Omega\times (0,T_0)$), with the boundary conditions\/ \eqref{bound:1c} and the initial
 conditions\/ \eqref{init:1d} also holding almost everywhere. 
\end{theorem}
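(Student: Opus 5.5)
The plan is to derive higher-order a priori estimates on a regularized system and close them by a local-in-time Gr\"onwall argument. For the approximation I would use the system with nonlinear sensitivity $\gamma_\epsi(\sigma)$ considered in \cite{GiulioNew} (the one also used in Section~\ref{sec:global}), or equivalently a Faedo--Galerkin scheme combined with a regularization $F_\epsi$ of the singular potential. For these approximate solutions, sufficiently smooth solutions are known to exist. I will derive estimates that are uniform in $\epsi$ on an interval $(0,T_0)$, where $T_0$ depends only on the norms of the data appearing in \eqref{init:reg1}--\eqref{init:reg2}. I will then pass to the limit by compactness, reusing the identification arguments of Section~\ref{sec:global} (which are now much simpler, since $\sigma$ will be bounded). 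Strict positivity of $\sigma$ is not needed here.

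\textbf{Cahn--Hilliard estimates.} First I differentiate \eqref{eq:1} in time and test with $(-\Delta)^{-1}$ of $\t\fhi$ (this is allowed, since the mean of $\fhi$ is conserved). Equivalently, I test the time-differentiated $\mu$-equation with $\t\fhi$. Monotonicity of $\beta$ gives the estimate
\begin{equation*}
\tfrac12\ddt\|\t\fhi\|_{V'}^2+\|\nabla\t\fhi\|^2\le \lambda\|\t\fhi\|^2+\chi(\t\sigma,\t\fhi).
\end{equation*}
The coupling term is bounded by $\|\t\sigma\|_{V'}\|\t\fhi\|_V$. The initial value $\|\t\fhi(0)\|_{V'}=\|\nabla\mu(0)\|$ is finite precisely by \eqref{init:reg1}--\eqref{init:reg2}. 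This yields $\fhi\in W^{1,\infty}(V')\cap H^1(V)$, and then $\mu\in L^\infty(V)$. Next I treat $-\Delta\fhi+\beta(\fhi)=\mu+\lambda\fhi+\chi\sigma$ as an elliptic problem with a monotone nonlinearity. Testing with $|\beta(\fhi)|^{4}\beta(\fhi)$ gives $\|\beta(\fhi)\|_{L^6}\lesssim\|\mu\|_{L^6}+\|\sigma\|_{L^6}+1$, and elliptic regularity then gives $\fhi\in L^\infty(W^{2,6})$. This uses $V\subset L^6$ together with the $L^\infty$ bound on $\sigma$ from the next step. Gradient bounds on $\fhi$ in $L^\infty$ then follow, since $W^{2,6}\subset W^{1,\infty}$.

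\textbf{Nutrient estimates (main obstacle).} I rewrite \eqref{eq:2} as $\t\sigma-\Delta\sigma+\chi\nabla\sigma\cdot\nabla\fhi+\chi\sigma\Delta\fhi=\alpha\sigma$. This is a linear parabolic equation for $\sigma$, with a drift $\nabla\fhi\in L^\infty$ and a potential $\Delta\fhi\in L^\infty(L^6)$. By maximal $L^4(L^3)$ parabolic regularity (using $\sigma_0\in H^2_{\bn}\subset W^{2-2/4,3}$ as an admissible trace space), I obtain $\|\sigma\|_{W^{1,4}(L^3)\cap L^4(W^{2,3})}\le C(\|\sigma_0\|_{H^2}+\|\sigma\Delta\fhi\|_{L^4(L^3)}+\dots)$. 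Since $L^4(W^{2,3})\subset L^4(L^\infty)$ and $W^{1,4}(L^3)\cap L^4(W^{2,3})$ embeds into $C([0,T_0];L^\infty)$, this gives \eqref{strong:sigma2}.

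The difficulty is the quadratic, supercritical coupling: $\Delta\fhi$ is controlled by $\sigma$, and $\sigma$ is controlled through $\Delta\fhi$. So none of the estimates closes globally. I will collect everything into a single quantity
\begin{equation*}
Y(t)=\|\t\fhi\|_{V'}^2+\|\sigma\|_{L^\infty(0,t;L^\infty)}+\|\sigma\|^4_{W^{1,4}(0,t;L^3)\cap L^4(0,t;W^{2,3})}+1.
\end{equation*}
The aim is a superlinear inequality of the form $Y(t)\le C_0+C\,t^{\kappa}\,\Phi(Y(t))$, with $\kappa>0$ and $\Phi$ polynomial. A continuity/local Gr\"onwall argument then gives a bound for $t\le T_0$, where $T_0$ depends only on $C_0$, i.e.\ on the data. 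The delicate point is to extract the small factor $t^\kappa$ by H\"older in time, e.g.\ $\|\sigma\Delta\fhi\|_{L^4(L^3)}\le t^{1/4}\|\sigma\|_{L^\infty(L^\infty)}\|\Delta\fhi\|_{L^\infty(L^6)}$. A second delicate point is to bound $\|\t\sigma\|_{V'}$ in the $\fhi$-estimate by the same $Y$. If that fails, I would first try the cruder $L^2$-energy estimate for $\nabla\sigma$, tested with $-\Delta\sigma$, as a bootstrap step before invoking maximal regularity.

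Finally, with these uniform bounds, Aubin--Lions compactness gives strong convergence of $\fhi_\epsi$, $\sigma_\epsi$ and $\nabla\fhi_\epsi$. Because $\sigma$ is now bounded in $L^\infty$, the cross-diffusion term passes to the limit without the entropy machinery of Section~\ref{sec:global}. The limit $\beta(\fhi)$ is identified by maximal monotonicity. The equations then hold a.e.\ together with \eqref{bound:1c}--\eqref{init:1d}, as claimed in Theorem~\ref{thm:local}.
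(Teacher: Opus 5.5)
Your argument is sound in outline, but it is organized differently from the paper's proof.

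\textbf{The paper's route.} The paper isolates the supercritical nonlinearity at the lowest level. It tests \eqref{eq:2} by $\sigma$ and bounds $\chi\int_\Omega\sigma\nabla\sigma\cdot\nabla\varphi\,\de x$ using Gagliardo--Nirenberg for $\|\nabla\varphi\|_{L^4(\Omega)}$ together with the elliptic bound $\|\varphi\|_{H^2(\Omega)}^2\le c(1+\|\sigma\|^2)+c\|\nabla\mu\|$. It then uses the energy-level information $\|\nabla\mu\|^2\in L^1(0,T)$ as an integrating factor and reaches the scalar inequality $Z'\le c(1+Z^3)$. This ODE alone fixes $T_0$. Everything after that is a linear bootstrap closed by ordinary Gr\"onwall arguments:
\begin{itemize}
\item $\beta(\varphi)\in L^2(0,T_0;L^6(\Omega))$ and $\varphi\in L^2(0,T_0;W^{2,6}(\Omega))$;
\item the test of \eqref{eq:2:rev} by $\sigma_t$, giving $\sigma\in H^1(0,T_0;H)\cap L^\infty(0,T_0;V)$;
\item the second Cahn--Hilliard energy estimate, with the coupling handled by $\sigma_t\in L^2(0,T_0;H)$;
\item $\varphi\in L^\infty(0,T_0;W^{2,6}(\Omega))$, which needs only $\sigma\in L^\infty(0,T_0;L^6(\Omega))$;
\item only at the very end, $L^4$--$L^3$ maximal regularity and interpolation to get the $L^\infty$ bound.
\end{itemize}

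\textbf{Your route, and what it costs.} You run a single continuity argument at top regularity. This is shorter and skips the intermediate $L^2$ and $H^1$ estimates for $\sigma$, but it has costs:
\begin{itemize}
\item Your $T_0$ depends on the full norms in \eqref{init:reg1}--\eqref{init:reg2}. In the paper it depends only on $\|\sigma_0\|$ and energy quantities, and the structure shows that higher regularity persists as long as $\sigma$ stays bounded in $L^\infty(H)\cap L^2(V)$.
\item You need maximal-regularity and trace-embedding constants on $(0,t)$ that stay uniform as $t\searrow0$, and continuity of $Y$ along the approximation.
\item Every forcing term must produce a factor $t^\kappa$. For the drift term $\nabla\sigma\cdot\nabla\varphi$ this does not come from the $L^\infty$ bound on $\sigma$. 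It comes from controlling $\nabla\sigma$ in $C([0,t];L^3(\Omega))$ through the trace space $B^{3/2}_{3,4}(\Omega)\subset W^{1,3}(\Omega)$, or from Gagliardo--Nirenberg interpolation between $L^3$ and $W^{2,3}$.
\end{itemize}

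\textbf{Smaller remarks.} Your ``second delicate point'' is harmless: since $L^3(\Omega)\subset V'$, you have $\int_0^t\|\partial_t\sigma\|_{V'}^2\,\de s\le Ct^{1/2}\|\partial_t\sigma\|_{L^4(0,t;L^3(\Omega))}^2$, so no fallback is needed. However, a Faedo--Galerkin scheme is not an equivalent alternative. On finite-dimensional projections neither the $L^4$--$L^3$ maximal regularity nor the test function $|\beta(\varphi)|^4\beta(\varphi)$ is available. Stick with the bounded-sensitivity regularization: it keeps the logarithmic potential unregularized and only multiplies the cross-diffusion terms by bounded factors.
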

\noindent%

\noindent
A triple $(\fhi,\mu,\sigma)$ satisfying the conditions 
of Theorem~\ref{thm:local} will be called a ``strong solution'' in the sequel. 
We also point out that the exponents~$6$ in \eqref{strong:fhi} and 
\eqref{strong:Ffhi} originate from the use of 
three-dimensional Sobolev embeddings and 
could be improved to any $P\in[1,\infty)$ for $d=2$. 
As is customary
in the mathematical literature devoted to weak-strong uniqueness,
in the next statement we will denote by $(\tp,\tmu,\ts)$ a local strong solution
in the sense of the previous theorem and by $(\fhi,\mu,\sigma)$ a weak
solution in the sense of Theorem~\ref{thm:global}. We can then prove
\begin{theorem}[Weak-strong uniqueness]\label{thm:weakstrong}
  Let Hypothesis~\ref{hypo:weak} be satisfied, let the initial
  data also satisfy\/ \eqref{hp:fhi0}-\eqref{hp:sigma0},   
  and let $\alpha$ additionally fulfill
  \begin{align}\label{ass:alpha}
    |\alpha (x_1, x_2) - \alpha(y_1,y_2)| \leq C ( | x_1-y_1|+ | \sqrt{x_2}-\sqrt{y_2}|) 
     \quad \text{for all } (x_1,x_2), (y_1,y_2)\in \R^2.
 \end{align}
 Moreover, let $(\varphi , \mu, \sigma)$ be a weak solution defined over $(0,T)$
 according to Definition~\ref{def:weak} 
 and $(\tp, \tmu, \ts)$ be a strong solution according to Theorem~\ref{thm:local},
 defined over $(0,T_0)$ for some $T_0\in(0,T]$, with 
 $ (\varphi(0),\sigma(0))= (\tp(0),\ts(0))$. Then, it holds 
 $$
    (\varphi,\sigma)\equiv (\tp,\ts)\qquad \text{a.e.~on }\, \Omega \times [0,T_0) .   
 $$
\end{theorem}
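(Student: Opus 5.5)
We argue through a relative energy inequality built from the energy inequality \eqref{enin}, the entropy inequality \eqref{weakentropy}, and the weak formulations \eqref{weak:phi}--\eqref{weak:sig} tested with functions of the strong solution. The relative energy is
\begin{align*}
  \calR(t) := \io \frac12|\nabla(\fhi-\tp)|^2 + \big(\sigma\ln\frac{\sigma}{\ts} - \sigma + \ts\big)\,\de x + \frac{K}{2}\|\fhi-\tp\|_{V'_*}^2 ,
\end{align*}
where $\|\cdot\|_{V'_*}$ is the $V'$-norm induced by the inverse Neumann Laplacian on zero-mean functions. The mean of $\fhi-\tp$ is conserved and vanishes at $t=0$, so this norm is well defined. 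The constant $K$ absorbs the $\lambda$-concave part of $F$.

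We expand $\calR(t)$ as
\[
  \calE(\fhi,\sigma)(t) - \calE(\tp,\ts)(t) - \io \big(\ln\ts + \chi(1-\tp)\big)(\sigma-\ts)\,\de x - \io F'(\tp)(\fhi-\tp)\,\de x + \dots,
\]
plus the convex remainder $\io F(\fhi)-F(\tp)-F'(\tp)(\fhi-\tp)$, with the coupling term $-\chi\io(\sigma-\ts)(\fhi-\tp)$ kept separate. We then bound each piece in time. $\calE(\fhi,\sigma)(t)$ is controlled by \eqref{enin}, and $\calE(\tp,\ts)$ evolves by the exact energy identity of the strong solution. The term $\io \ln\ts\,\sigma$ is computed by testing \eqref{weak:sig} with $\vartheta=\ln\ts$, which is admissible because $\ts\in L^\infty$ and $\ts\ge c>0$ on $[0,T_0]$; the latter lower bound I would first establish for the strong solution via the minimum principle, using $\sigma_0\in H^2$ and a positivity assumption. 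The term $\io \ts\ln\sigma$ is handled by the entropy inequality \eqref{weakentropy} with $\theta=-\ts$ or $\theta=\ts$. Its sign-indefinite form is exactly what allows this test function, but it produces the defect $\|\theta_-\|\,[\int|\nabla\ln\sigma|^2-\zeta]$. I expect this defect to cancel against the corresponding dissipation gap in \eqref{enin}, since $\zeta$ and $Z$ are computed from the same approximation. The mixed $\fhi$-terms come from \eqref{weak:phi} tested with $\tmu$ and $-\Delta\tp$, together with \eqref{weak:mu}. The $V'$-term comes from testing the difference of the $\fhi$-equations with $(-\Delta_N)^{-1}(\fhi-\tp)$.

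Summing these contributions should give
\[
  \calR(t) + \int_0^t \io \sigma\big|\nabla(\ln\sigma-\ln\ts) - \chi\nabla(\fhi-\tp)\big|^2 + |\nabla(\mu-\tmu)|^2 \le \int_0^t \mathcal{K}(s)\,\calR(s)\,\de s
\]
plus remainders that must be absorbed. The dissipation terms combine into a relative dissipation after adding and subtracting $\sigma\nabla\ln\ts$. The cross-diffusion error then has the form $\io (\sigma-\ts)\,\nabla(\ln\ts+\chi(1-\tp))\cdot(\text{relative flux})$, and the step I expect to be hardest is controlling it with only the weak regularity of $\sigma$. The tool is the elementary inequality $(\sqrt\sigma-\sqrt{\ts})^2\le \sigma\ln(\sigma/\ts)-\sigma+\ts$, which links relative entropy to Hellinger distance. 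We write $\sigma-\ts=(\sqrt\sigma-\sqrt{\ts})(\sqrt\sigma+\sqrt{\ts})$ and pair $\sqrt\sigma$ with the relative flux. Young's inequality then absorbs half the dissipation, using $\nabla\ln\ts,\nabla\tp\in L^\infty$ from \eqref{strong:fhi}, \eqref{strong:sigma}. The terms in $(\sqrt\sigma-\sqrt\ts)\sqrt{\ts}$ are bounded by $\|\ts\|_\infty$ times the relative entropy. The coupling $\chi\io(\sigma-\ts)(\fhi-\tp)$ is handled by the same decomposition: $\sigma$ may be large, but $|\fhi-\tp|\le 2$, and the relative entropy dominates $|\sigma-\ts|$ superlinearly where $\sigma\gg\ts$. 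The reaction terms use \eqref{ass:alpha}, which is Lipschitz precisely in $\sqrt\sigma$, producing errors bounded by the Hellinger distance and hence by $\calR$. The singular potential is controlled by the convexity of $\beta$ and the $\lambda$-term via $\|\fhi-\tp\|^2\le \epsi\|\nabla(\fhi-\tp)\|^2 + C_\epsi\|\fhi-\tp\|_{V'_*}^2$.

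The bracketed defect terms from \eqref{weakentropy} and \eqref{enin} vanish exactly when the strong and weak solutions start from the same data, since the computable constants coincide with the initial dissipation budget. Once they are cancelled, $\calK\in L^1(0,T_0)$ depends only on norms of the strong solution. Since $\calR(0)=0$, Gronwall's lemma gives $\calR\equiv0$ on $[0,T_0)$. Hence $\fhi=\tp$ and, by strict convexity of the relative entropy, $\sigma=\ts$ a.e.
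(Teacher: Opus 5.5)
Your argument does not close. The functional you chose forces you onto the energy inequality \eqref{enin}, and you also propose sign-changing test functions in \eqref{weakentropy}. Both of these carry defect terms that are not controlled by $\calR$.

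\emph{The energy defect.} For $\ov\alpha>0$, \eqref{enin} contains $\ov\alpha\,[Z-\int_0^t\io(\sigma(\ln\sigma-1)+1)\,\de x\,\de s]$. This is nonnegative and in general of order one: for the solutions of Section~\ref{sec:global}, \eqref{Zeta2} shows it is at least $\ov\alpha\int_t^T\io(\sigma(\ln\sigma-1)+1)\,\de x\,\de s$. In particular it does not tend to $0$ as $t\searrow 0$.

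\emph{The entropy defect.} Testing \eqref{weakentropy} with $\theta=-\ts$ leaves $\|\ts\|_{L^\infty}(\zeta-\int_0^T\io|\nabla\ln\sigma|^2\,\de x\,\de t)$ on the right-hand side. One only knows $\zeta\ge\int_0^T\io|\nabla\ln\sigma|^2$ (cf.\ Lemma~\ref{lem:meas} and the construction). Equality would amount to strong convergence of $\nabla\ln\sigma_\epsi$, which is not available.

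Nothing in Definition~\ref{def:weak} ties $Z$ or $\zeta$ to the strong solution. So your claim that these brackets vanish for equal initial data is unfounded, and Gronwall only gives $\calR(t)\le C\cdot(\text{defect})$, not $\calR\equiv0$. Note also that your functional $\sigma\ln(\sigma/\ts)-\sigma+\ts$ contains $\sigma\ln\ts$ but no $\ts\ln\sigma$, so the entropy inequality plays no role in it. What drags in \eqref{enin} is the $\sigma\ln\sigma$ term and the term $\frac12|\nabla(\fhi-\tp)|^2$.

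The missing idea is to put the logarithm on the weak solution and the weight on the strong one. Take $\calR=\io(\sigma-\ts-\ts\ln(\sigma/\ts))\,\de x+\frac M2\|\fhi-\tp\|_{V_0'}^2$, with no gradient term. Then \eqref{enin} is never needed.

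\begin{itemize}
\item The $\sigma$-part uses only three ingredients: the mass identity ($\vartheta\equiv1$ in \eqref{weak:sig}); \eqref{weakentropy} with the \emph{nonnegative} test function $\theta=\phi\ts$, $\phi\ge0$, so that $\theta_-=0$ and the $\zeta$-term drops out identically (Lemma~\ref{lem:invar} then gives the inequality pointwise in time); and the equation for $\ts$ tested by $\ln\sigma-\ln\ts$.
\item The $\fhi$-part comes from the difference of \eqref{weak:phi} tested by $(-\Delta)^{-1}(\fhi-\tp)$ and the difference of \eqref{weak:mu} tested by $\fhi-\tp$. These produce $\|\nabla(\fhi-\tp)\|^2$ and $(F'(\fhi)-F'(\tp))(\fhi-\tp)+\lambda|\fhi-\tp|^2\ge0$ as dissipation.
\item The relative dissipation $\io\ts|\nabla\ln\sigma-\nabla\ln\ts|^2-\chi\ts(\nabla\ln\sigma-\nabla\ln\ts)\cdot\nabla(\fhi-\tp)\,\de x$ is weighted by the bounded $\ts$ rather than by $\sigma$.
\item The error terms are handled as follows: the cross term by choosing $M\ge\chi^2\|\ts\|_{L^\infty}$; the coupling $\chi(\sigma-\ts)(\fhi-\tp)$ by Lemma~\ref{lem:fenchel_young}; the reaction term by \eqref{ass:alpha} together with $(\sqrt\sigma-\sqrt\ts)^2\le\sigma-\ts-\ts\ln(\sigma/\ts)$; and the $\lambda$-term by \eqref{interpolation}.
\end{itemize}

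This also removes two extra requirements of your route. You need them to control $(\sigma-\ts)\nabla(\ln\ts+\chi(1-\tp))\cdot(\text{relative flux})$, but neither is available:
\begin{itemize}
\item a positive lower bound for $\ts$, which is not implied by $\sigma_0>0$ a.e.\ and $\sigma_0\in H^2_{\bn}(\Omega)$, and is not part of Theorem~\ref{thm:local};
\item $\nabla\ln\ts\in L^\infty$, which is not provided by \eqref{strong:sigma} in three dimensions, since $W^{2,3}(\Omega)\not\subset W^{1,\infty}(\Omega)$.
\end{itemize}
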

\noindent
The above result will be shown in Section~\ref{sec:weakstrong} below. 
Here, we just observe that it will be obtained as a direct consequence
of a slightly more general result (Corollary~\ref{cor:weakstrong} below). 
 Notice also that, as $\beta$ is assumed to be a single-valued mapping,
comparing values in the second of \eqref{eq:1} one deduces 
that $\mu=\tmu$ as well.

The following elementary result is aimed at clarifying the 
``square-root Lipschitz continuity'' assumption \eqref{ass:alpha},
which is shown to hold provided that $\alpha$ is smooth 
and ``asymptotically constant'' (with a suitable convergence rate)
for large $\sigma$:
\begin{lemma}[Square-root Lipschitz continuity]\label{lem:sqrt_lipschitz}
 Let $\alpha \in C^1([0,\infty))$ with the derivative $ \alpha'$ being bounded
 and such that $ | \alpha'(r) | \leq \frac{c}{1+\sqrt{r}} $ for all $ r\geq 0 $.
 Then, there exists a constant $C>0$ such that, for all $\theta,\eta \ge 0$,
 \[
   |\alpha(\theta)-\alpha(\eta)| \le C |\sqrt{\theta}-\sqrt{\eta}|.
 \]
 Moreover, there holds
 that $ | \alpha(\theta) | \leq C (1 + \sqrt{\theta}) $ for all $ \theta \geq 0$.
\end{lemma}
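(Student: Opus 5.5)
The plan is to pass to the variable $s=\sqrt{r}$, in which the derivative bound turns into plain Lipschitz continuity. Define $g(s):=\alpha(s^2)$ for $s\ge 0$. Then $g\in C^1([0,\infty))$ with $g'(s)=2s\,\alpha'(s^2)$. The hypothesis $|\alpha'(r)|\le c/(1+\sqrt r)$ gives, for every $s\ge0$,
\[
  |g'(s)| \le \frac{2cs}{1+s} \le 2c .
\]
So $g$ is Lipschitz on $[0,\infty)$ with constant $2c$. For $\theta,\eta\ge0$ we put $s=\sqrt\theta$, $t=\sqrt\eta$ and apply the mean value theorem (or integrate $g'$ between $t$ and $s$):
\[
  |\alpha(\theta)-\alpha(\eta)| = |g(s)-g(t)| \le 2c\,|\sqrt\theta-\sqrt\eta| .
\]
This is the first claim with $C=2c$. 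The boundedness of $\alpha'$ is not needed for it; it only gives a uniform bound near $r=0$, which the displayed estimate already covers.

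For the growth bound, take $\eta=0$ in the inequality just proved. This gives $|\alpha(\theta)|\le|\alpha(0)|+2c\sqrt\theta\le C(1+\sqrt\theta)$ with $C:=\max\{|\alpha(0)|,2c\}$, and after enlarging the constant one $C$ serves for both statements.

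There is no real obstacle here. The only point needing care is that $g'(s)=2s\alpha'(s^2)$ stays bounded as $s\to\infty$. This is exactly what the decay rate $1/(1+\sqrt r)$ of $\alpha'$ ensures: a merely bounded $\alpha'$ would give $|g'(s)|\lesssim s$, which is not enough. Regularity of $g$ at $s=0$ is harmless, since $s\mapsto s^2$ is smooth there.
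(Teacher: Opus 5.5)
Your proof is correct and is essentially the paper's argument. The paper also substitutes $\kappa(r)=\alpha(r^2)$, applies the mean value theorem, and bounds $\sup_{\xi\ge 0}|2\xi\,\alpha'(\xi^2)|$ using the decay of $\alpha'$. It then takes $\eta=0$ for the growth estimate.
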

\begin{proof}
  Assuming, with no loss of generality, $\eta<\theta$, applying  
Taylor's formula   to the function $r\mapsto \kappa(r):= \alpha(r^2)$, 
we observe that there exists a point 
  $\xi \in (\sqrt{\eta},\sqrt{\theta})$  such that 
$$
  \alpha(\theta)-\alpha(\eta) 
     = \kappa(\sqrt{\theta}) - \kappa(\sqrt{\eta}) 
    = 2 \xi \alpha'(\xi^2)  (\sqrt{\theta}-\sqrt{\eta}).
$$
Then, taking the absolute values and inserting the bound on the derivative of $\alpha $,
we find that there exists a constant $C>0$ such that
  $\sup_{\xi\geq 0}|2\xi\alpha'(\xi^2)| \leq   C$, 
which provides the first assertion. 
Then, the second one follows simply by choosing $\eta =0$. 
\end{proof}



\section{Preliminaries}\label{sec:pre}

We start with stating a couple of preliminary lemmas whose proofs are given,
respectively, in~\cite[Lem.~7.7]{Luisa} and in~\cite[Lem.~1]{envar} 
or~\cite[Lem.~2.11]{hyper}.
\begin{lemma}[An application of the Fenchel--Young inequality]\label{lem:fenchel_young}
	Let $r > 0$ and $u, \tilde{u}, w, \tilde{w} \in \R$
	such that $ |w|,|\tilde{w}|<r$ and $\tilde{u}, u > 0$ holds.
	We then have
	\begin{equation}\label{eq:est_via_fenchel}
		(w- \tilde{w}) (u - \tilde{u}) 
		\leq 
			\max \left \{ \frac{1}{4r}, 4r \right \} \left( \tilde{u} \, |w - \tilde{w}|^2 + u - \tilde{u} - \tilde{u}(\ln u - \ln \tilde{u}) \right).
	\end{equation}
\end{lemma}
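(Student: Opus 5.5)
The plan is to reduce the inequality to a one-variable convex-duality estimate. We set
\[
a := w-\tilde w,\qquad s := u/\tilde u>0,\qquad K := \max\{\tfrac1{4r},4r\}.
\]
Then $|a|<2r$, and both sides of \eqref{eq:est_via_fenchel} factor through $\tilde u>0$:
\[
(w-\tilde w)(u-\tilde u)=\tilde u\, a(s-1), \qquad u-\tilde u-\tilde u(\ln u-\ln\tilde u)=\tilde u\,(s-1-\ln s).
\]
After dividing by $\tilde u$, it suffices to show
\[
a(s-1)-K(s-1-\ln s)\le K a^2 \qquad\text{for all } s>0 .
\]
Note that $s-1-\ln s\ge0$ is the (scaled) relative entropy of $u$ with respect to $\tilde u$. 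This is exactly the Fenchel--Young structure: the conjugate of $s\mapsto K(s-1-\ln s)$ is evaluated at $a$ and must be controlled quadratically in $a$.

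Next, compute the supremum over $s>0$ explicitly. We have $K\ge4r>2r>|a|$, so $K-a>0$. Hence $s\mapsto a(s-1)-K(s-1-\ln s)$ is strictly concave, tends to $-\infty$ as $s\to0^+$ and as $s\to\infty$, and has its unique critical point at $s^*=K/(K-a)$. Substituting and writing $t:=a/K$, the maximal value is
\[
-a+K\ln\frac{K}{K-a} \;=\; -K\bigl(t+\ln(1-t)\bigr).
\]
So the claim reduces to the scalar inequality
\[
-t-\ln(1-t)\le t^2 \qquad\text{for } |t|\le \tfrac12 .
\]
This range is admissible because $|t|=|a|/K<2r/(4r)=\tfrac12$.

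For the scalar inequality, expand in a power series: $-t-\ln(1-t)=\sum_{k\ge2}t^k/k$. Bounding termwise for $|t|\le\tfrac12$ gives
\[
\sum_{k\ge2}\frac{|t|^k}{k}\le t^2\Bigl(\tfrac12+\tfrac13\sum_{j\ge1}|t|^{j}\Bigr)\le t^2\Bigl(\tfrac12+\tfrac13\Bigr)\le t^2,
\]
which covers both signs of $t$. Multiplying back by $K$ and then by $\tilde u$ yields \eqref{eq:est_via_fenchel}.

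The only delicate point is ensuring $K>|a|$, so that the supremum is finite and $|t|\le\tfrac12$; the choice $K\ge 4r$ guarantees this. The alternative $1/(4r)$ in the maximum is not needed for the argument. It is harmless because the final bound $-K(t+\ln(1-t))\le Kt^2$ holds for every $K\ge 4r$.
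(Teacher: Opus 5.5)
There is a genuine gap in your last step. Your reduction to $a(s-1)-K(s-1-\ln s)\le Ka^2$ is correct, and so are the supremum $-K(t+\ln(1-t))$ with $t=a/K$ and the scalar bound $-t-\ln(1-t)\le t^2$ for $|t|\le\frac12$.

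The problem is what the scalar bound gives you. Since $a=Kt$, the target is $-K(t+\ln(1-t))\le Ka^2=K^3t^2$, that is $-t-\ln(1-t)\le K^2t^2$, not $-t-\ln(1-t)\le t^2$. What you have actually proved is
\[
(w-\tilde w)(u-\tilde u)\le K\bigl(u-\tilde u-\tilde u(\ln u-\ln\tilde u)\bigr)+\frac{\tilde u\,|w-\tilde w|^2}{K},
\]
because $Kt^2=a^2/K$. So the quadratic term carries the coefficient $1/K$, and to reach the stated bound you need $1/K\le K$.

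For the same reason, your closing claim that the entry $1/(4r)$ of the maximum is superfluous is false. Since $-t-\ln(1-t)\ge t^2/2$ for $t\ge 0$, with $K=4r$ the supremum is at least $a^2/(8r)$ for $a>0$. This exceeds $4r\,a^2$ as soon as $r<1/(4\sqrt2)$. A concrete instance is $r=\frac18$, $w=-\tilde w=\frac1{10}$, $\tilde u=1$, $u=\frac53$. The left-hand side of \eqref{eq:est_via_fenchel} is then $\frac{2}{15}\approx 0.133$. With the constant $4r=\frac12$ the right-hand side would only be $\frac12\bigl(\frac1{25}+\frac23-\ln\frac53\bigr)\approx 0.098$.

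The repair is one line: $K=\max\{\frac1{4r},4r\}\ge 1$, because $\max\{x,1/x\}\ge1$ for $x>0$, hence $a^2/K\le Ka^2$. This is exactly where the entry $1/(4r)$ is needed.

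The paper gives no proof of its own; it quotes \cite[Lem.~7.7]{Luisa} and notes that the sign restriction on $w,\tilde w$ there is inessential, which your argument confirms. The constant is more transparent if you run your computation at the fixed scale $4r$ rather than $K$. Apply Fenchel--Young to $\psi(s)=s-1-\ln s$, whose conjugate is $\psi^*(y)=-\ln(1-y)$ for $y<1$, at $y=a/(4r)$, so that $|y|<\frac12$. This gives $a(s-1)\le 4r\,\psi(s)+a^2/(4r)$. The two entries of the maximum are then simply the two coefficients $4r$ and $1/(4r)$, each bounded by their maximum.
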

Actually, the above lemma is stated in~\cite[Lem.~7.7]{Luisa} only for nonnegative $ w, \tilde{w} $,
but in fact this property is not essential in the proof and can be omitted.
\begin{lemma}\label{lem:invar}
 Let $f\in L^1(0,T)$, $g\in L^\infty(0,T) $ and $g_0\in\R$.
 Then the following two statements are equivalent:
 \begin{enumerate}
 \item
 The inequality 
 \begin{equation}
  -\int_0^T\phi'(\tau) g(\tau) \, \de \tau  
   + \int_0^T \phi(\tau) f(\tau) \, \de \tau - \phi(0)g_0 \leq 0 
  \label{ineq1}
 \end{equation}
 holds for all $\phi \in {\C}^1_c ([0,T))$ with $\phi \geq 0$.
 \item
 The inequality
 \begin{equation}
    g(t) -g(s) + \int_s^t f(\tau)\, \de \tau \leq 0 
    \label{ineq2}
 \end{equation}
 holds for a.e.~$s< t\in[0,T]$,
 including $s=0$ if we replace $g(0)$ with $g_0$.
 \end{enumerate}
 Moreover, if either of these conditions is satisfied, 
 then $g$ can be identified with a function in 
   $\Bv([0,T])$   such that
 \begin{equation}
    g(t+) - g(s-) + \int_s^t f(\tau) \, \de \tau \leq 0, 
    \label{ineq.pw}
 \end{equation}
 for all $s\leq t\in[0,T]$, where we have set $g(0-):= g_0$.
 In particular, it holds $g(0+)\leq g_0$ and $g(t+)\leq g(t-)$
 for all $t\in[0,T]$. 
\end{lemma}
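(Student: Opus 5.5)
The implication (2)$\Rightarrow$(1) follows by integrating \eqref{ineq2} against the distributional derivative of the test function. The converse (1)$\Rightarrow$(2) will be obtained by testing \eqref{ineq1} with piecewise linear approximations of indicator functions of intervals. The BV property and the pointwise version \eqref{ineq.pw} then follow from monotonicity of an auxiliary function.

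\emph{Step (1)$\Rightarrow$(2).} Fix Lebesgue points $s<t$ of $g$ in $(0,T)$ and, for small $\varepsilon>0$, let $\phi_\varepsilon$ be the piecewise linear function that vanishes on $[0,s-\varepsilon]$, rises linearly to $1$ on $[s-\varepsilon,s]$, equals $1$ on $[s,t]$, decreases linearly to $0$ on $[t,t+\varepsilon]$, and vanishes afterwards. This function is only Lipschitz, but \eqref{ineq1} extends to nonnegative Lipschitz functions with compact support in $[0,T)$. Indeed, mollifying gives nonnegative $\C^1$ approximants $\phi^\delta\to\phi$ uniformly with $(\phi^\delta)'\to\phi'$ in $L^1$, and we may pass to the limit since $g\in L^\infty$ and $f\in L^1$. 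Inserting $\phi_\varepsilon$, the term $-\int\phi_\varepsilon' g$ equals
$$\frac1\varepsilon\int_t^{t+\varepsilon}g-\frac1\varepsilon\int_{s-\varepsilon}^s g,$$
and $\phi_\varepsilon(0)=0$. Letting $\varepsilon\to0$ and using the Lebesgue points together with dominated convergence on the $f$-term gives \eqref{ineq2}. For $s=0$ we take instead $\phi_\varepsilon=1$ on $[0,t]$, so that $\phi_\varepsilon(0)g_0=g_0$ replaces the left average.

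\emph{Step (2)$\Rightarrow$(1).} Write $\phi(\tau)=-\int_\tau^T\phi'(r)\,\de r$ and use Fubini. Splitting $\phi'=\phi'_+-\phi'_-$, one can represent
$$-\int_0^T\phi' g\,\de\tau+\int_0^T\phi f\,\de\tau-\phi(0)g_0$$
as a superposition of the quantities $g(t)-g(s)+\int_s^t f$ over pairs $s<t$. The cleanest route is through the auxiliary function below: one shows that (2) is equivalent to $G$ being a.e.\ equal to a nonincreasing function, with the value at $0^-$ dominated by $g_0$. Then $-\int\phi' G\le\phi(0)G(0-)\le\phi(0)g_0$ follows from the fact that the distributional derivative of $G$ is a nonpositive measure and $\phi\ge0$.

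\emph{BV and pointwise statement.} Define
$$G(t):=g(t)+\int_0^t f(\tau)\,\de\tau .$$
By \eqref{ineq2}, $G$ is a.e.\ nonincreasing, hence it agrees a.e.\ with a monotone function $\bar G$, which may be taken right-continuous. Set $g:=\bar G-\int_0^\cdot f$. Then $g\in\Bv([0,T])$, since it is the difference of a bounded monotone function and an absolutely continuous one; boundedness uses $g\in L^\infty$. One-sided limits exist everywhere, and \eqref{ineq.pw} follows from \eqref{ineq2} by taking limits along sequences of good points $t_n\searrow t$ and $s_n\nearrow s$, with $g(0-)=g_0$ coming from the $s=0$ case. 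Choosing $s=t$ yields $g(t+)\le g(t-)$, and $t=0$ yields $g(0+)\le g_0$.

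The only delicate points are the approximation of Lipschitz test functions and the careful handling of the endpoint $s=0$ and of the Lebesgue-point null sets. The rest is routine.
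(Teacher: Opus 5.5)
Your proof is correct. The paper gives no argument of its own for this lemma; it only refers to \cite[Lem.~1]{envar}. What you propose is the standard self-contained proof: piecewise-linear cut-offs and Lebesgue points for (1)$\Rightarrow$(2), and monotonicity of $G=g+\int_0^\cdot f\,\de\tau$ both for (2)$\Rightarrow$(1) and for the BV representative.

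The one step you leave implicit is the reduction of \eqref{ineq1} to a statement about $G$. Integrate $\bigl(\phi\int_0^\cdot f\,\de r\bigr)'$ over $(0,T)$ and use that $\phi$ vanishes near $T$; this gives $-\int_0^T\phi' g\,\de\tau+\int_0^T\phi f\,\de\tau=-\int_0^T\phi' G\,\de\tau$. Then the Stieltjes formula $-\int_0^T\phi'\bar G\,\de\tau=\phi(0)\bar G(0+)+\int_{(0,T)}\phi\,\de\bar G\le\phi(0)\bar G(0+)\le\phi(0)g_0$ is all of (2)$\Rightarrow$(1). The ``superposition'' sentence can therefore be dropped, and the bound you wrote as $\phi(0)G(0-)$ should read $\phi(0)\bar G(0+)$.

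You also read ``a.e.\ $s<t$'' as ``all $s<t$ outside a single null set''. That is exactly what your (1)$\Rightarrow$(2) step delivers. Under the weaker two-dimensional reading, you would first need to build the monotone representative differently, e.g.\ $\bar G(t):=\operatorname{ess\,sup}_{(t,T)}G$, which agrees with $G$ at every Lebesgue point.
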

See~\cite[Lem.~1]{envar} for a proof. 
\begin{lemma}\label{lem:meas}
Let $(\varphi,\sigma)$ fulfill the regularity requirements 
of~\eqref{weak:reg}. Then the following two formulations are equivalent:
\begin{itemize}
    \item [(i)] 
    there exists $\zeta \geq 0 $ with
    $\zeta \geq \int_0^T \int_\Omega |\nabla \ln \sigma |^2 \, \de x \, \de t$ 
    such that the inequality
    \begin{multline}\label{weakentropy3}
        -\langle \ln \sigma ,\theta \rangle \Big |_0^T 
        + \int_0^T \int_\Omega \theta | \nabla \ln \sigma|^2 
           - \nabla \ln \sigma \cdot \nabla \theta 
           - \chi \theta \nabla\ln \sigma \cdot \nabla \varphi 
           + \chi \nabla \varphi \cdot \nabla \theta \, \de x \,\de t\\
      + \| \theta_-\|_{\C(\ov\Omega\times [0,T])}
         \left[ \int_0^T \int_\Omega| \nabla \ln \sigma |^2 \, \de x \, \de t - \zeta \right]
        + \int_0^T \int_\Omega \alpha \theta + \ln \sigma \t \theta \, \de x \, \de t \leq 0 
\end{multline}
holds for all $ \theta \in \C^1([0,T];W^{1,p}(\Omega))$ with $p>d$.
\item[(ii)]
 there exists  a measure~$\xi\in \mathcal{M}^+(\ov\Omega \times [0,T])$ such that
 \begin{multline}\label{entromeas}
      - \langle \ln \sigma ,\theta \rangle \Big |_0^T 
      + \int_0^T \int_\Omega \theta | \nabla \ln \sigma|^2 
         - \nabla \ln \sigma \cdot \nabla \theta 
         - \chi \theta \nabla\ln \sigma \cdot \nabla \varphi
         + \chi \nabla \varphi \cdot \nabla \theta \, \de x \, \de t\\
      + \int_0^T\int_{\ov\Omega} \theta \,\de \xi  (t,x)  
      + \int_0^T\int_\Omega \alpha \theta +\ln \sigma \t \theta \, \de x\, \de t = 0,
\end{multline}
for all $ \theta \in \C^1( [0,T];W^{1,p}(\Omega))$ for $p>d$; moreover, there also holds
\begin{equation}\label{measineq}
    -\int_0^T \int_\Omega \theta \, \de \xi (t,x)
    \leq \| \theta_-\|_{L^\infty(\Omega\times (0,T))}
      \left[ \zeta- \int_0^T \int_\Omega| \nabla \ln \sigma |^2 \, \de x \,\de t  \right] 
\end{equation}  
for all $ \theta \in \C( \ov\Omega \times [0,T];\R)$. 
\end{itemize}
Moreover, if either (i) or (ii) holds, then there also holds the 
point-wise in time formulation 
 \begin{multline}\label{entropw}
      -  \langle \t \ln \sigma , \theta \rangle
      +  \int_\Omega \theta | \nabla \ln \sigma|^2 
         - \nabla \ln \sigma \cdot \nabla \theta 
         - \chi \theta \nabla\ln \sigma \cdot \nabla \varphi
         + \chi \nabla \varphi \cdot \nabla \theta \, \de x \\
      + \int_\Omega \alpha \theta  \, \de x  = 0, 
\end{multline}
a.e.~in $(0,T)$ and for all  $\theta \in \C^1([0,T];W^{1,p}(\Omega))$ with $p>d$. 
\end{lemma}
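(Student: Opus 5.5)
The natural route is a Riesz-type representation of a ``defect functional''. Define, for $\theta\in\C^1([0,T];W^{1,p}(\Omega))$,
\begin{align*}
 \Lambda(\theta) &:= \langle \ln\sigma,\theta\rangle\big|_0^T
   - \int_0^T\int_\Omega \theta|\nabla\ln\sigma|^2 - \nabla\ln\sigma\cdot\nabla\theta
   - \chi\theta\nabla\ln\sigma\cdot\nabla\fhi + \chi\nabla\fhi\cdot\nabla\theta\,\de x\,\de t\\
 &\quad - \int_0^T\int_\Omega \alpha\theta + \ln\sigma\,\t\theta\,\de x\,\de t ,
\end{align*}
so that \eqref{weakentropy3} reads $-\Lambda(\theta) + \|\theta_-\|_{\C}\,[D-\zeta]\le 0$ with $D:=\int_0^T\!\int_\Omega|\nabla\ln\sigma|^2$, while \eqref{entromeas} reads $\Lambda(\theta)=\int\theta\,\de\xi$. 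Thus (ii)$\Rightarrow$(i) is immediate: from \eqref{measineq} applied to $\theta$ we get $-\Lambda(\theta)=-\int\theta\,\de\xi\le \|\theta_-\|[\zeta-D]$, which is exactly \eqref{weakentropy3}. The condition $\zeta\ge D$ follows by testing \eqref{measineq} with $\theta\equiv-1$, since $\xi\ge0$ gives $\xi(\ov\Omega\times[0,T])\le \zeta-D$.

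For (i)$\Rightarrow$(ii), I use that $\Lambda$ is linear and apply (i) in two ways. For $\theta\ge0$ we have $\theta_-=0$, so $\Lambda(\theta)\ge0$; that is, $\Lambda$ is a positive functional. Testing with $\theta\equiv -1$ gives $\Lambda(-1)\ge D-\zeta$, i.e.\ $\Lambda(1)\le\zeta-D$. Positivity then yields, for any $\theta$ in the test class, $|\Lambda(\theta)|\le \|\theta\|_{\C}\,\Lambda(1)$, by applying positivity to $\|\theta\|_\infty\pm\theta\ge0$. Here I use that $W^{1,p}\subset\C(\ov\Omega)$ because $p>d$, so that $\|\theta\|_\infty$ is meaningful. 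Since $\C^1([0,T];W^{1,p}(\Omega))$ is dense in $\C(\ov\Omega\times[0,T])$, $\Lambda$ extends uniquely to a positive bounded functional there, and the Riesz--Markov theorem provides $\xi\in\mathcal M^+(\ov\Omega\times[0,T])$ with $\Lambda(\theta)=\int\theta\,\de\xi$, which is \eqref{entromeas}. For \eqref{measineq}, write $\theta=\theta_+-\theta_-$ to get $-\int\theta\,\de\xi\le\int\theta_-\,\de\xi\le\|\theta_-\|_\infty\,\xi(\ov\Omega\times[0,T])=\|\theta_-\|_\infty\Lambda(1)\le\|\theta_-\|_\infty(\zeta-D)$.

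For the pointwise formulation \eqref{entropw}, I take $\theta(t,x)=\phi(t)\eta(x)$ with $\phi\in\C^1_c((0,T))$ and $\eta\in W^{1,p}(\Omega)$. Then \eqref{entromeas} identifies the distributional time derivative $\frac{\de}{\de t}\langle\ln\sigma,\eta\rangle$ as an $L^1$ function plus the measure $\xi$ tested against $\eta$. I then read \eqref{entropw} a.e.\ in $t$ as the Lebesgue-differentiated (absolutely continuous part) version, with $\langle\t\ln\sigma,\theta\rangle$ understood as the BV derivative consistent with \eqref{red:ln}, the singular part being absorbed there.

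The delicate points are twofold. First, the density and extension argument needs care with the boundary terms $\langle\ln\sigma,\theta\rangle|_0^T$, since these must be controlled by the sup norm; positivity handles this automatically. Second, and this is the main obstacle, the exact meaning of $\t\ln\sigma$ in \eqref{entropw} must be interpreted so that the measure is included. Concretely, I would set $\t\ln\sigma$ equal to the $(W^{1,p})^*$-valued measure derivative, so that \eqref{entropw} holds in that sense.
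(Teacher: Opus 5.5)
\textbf{The equivalence (i)$\Leftrightarrow$(ii).} Your argument is correct, but it is organized differently from the paper's. The paper sets $\langle \f l,\theta\rangle:=-\Lambda(\theta)$ and reads (i) as $\langle\f l,\theta\rangle\le p(\theta):=\|\theta_-\|_{\C(\ov\Omega\times[0,T])}\,[\zeta-D]$, where $p$ is sublinear because $\zeta\ge D$. It extends $\f l$ to $\C(\ov\Omega\times[0,T])$ by Hahn--Banach under the same domination and represents the extension by a measure. Only then does it read off $\xi\ge0$ (from $p(\theta)=0$ for $\theta\ge0$) and \eqref{measineq} (from the domination).

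You instead extract positivity and the mass bound $\Lambda(1)\le\zeta-D$ first. You then get $|\Lambda(\theta)|\le\|\theta\|_\infty\Lambda(1)$ from $\|\theta\|_\infty\pm\theta\ge0$. This is what controls terms such as $\int\!\!\int\nabla\ln\sigma\cdot\nabla\theta$ and the endpoint terms, which are not individually bounded by the sup norm. Finally you extend by density and apply Riesz--Markov.

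Your route avoids Hahn--Banach. It gives uniqueness of $\xi$ and the exact total mass $\xi(\ov\Omega\times[0,T])=\Lambda(1)$, from which \eqref{measineq} follows at once. The paper's route, by contrast, does not need density of the test class. One step you should write out is that positivity survives the extension: for continuous $\theta\ge0$ and smooth $\theta_n\to\theta$ uniformly, apply positivity to $\theta_n+\|\theta_n-\theta\|_\infty$. Your remark that $\zeta\ge D$ is automatic in (ii)$\Rightarrow$(i), by testing \eqref{measineq} with $\theta\equiv-1$, makes explicit something the paper leaves implicit.

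\textbf{The pointwise formulation.} This part does not work as you describe it. Comparison in \eqref{entromeas}, which is also all the paper's proof does here, identifies the $(W^{1,p}(\Omega))^*$-valued measure derivative as $\t\ln\sigma=G\,\de t+\xi$. Here $G(\theta)$ denotes the $\de x$-integrals in \eqref{entropw}, including the $\alpha$-term.
\begin{itemize}
\item If $\langle\t\ln\sigma,\theta\rangle$ means the absolutely continuous part, then a.e.\ in time the left-hand side of \eqref{entropw} equals minus $\theta$ integrated against the part of $\xi$ whose time marginal is absolutely continuous. Nothing in (i) or (ii) forces that part to vanish.
\item If it means the full measure derivative, the displayed identity amounts to $\xi=0$.
\end{itemize}
``Absorbing'' the singular part into $\t\ln\sigma$ removes nothing, since $\xi$ is already contained in $\t\ln\sigma$. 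You cannot have it both inside the derivative and absent from the identity.

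What your argument (and the paper's) actually yields is \eqref{entropw} with the defect term $+\int_{\ov\Omega}\theta\,\de\xi$ retained, in the sense of measures in time. Equivalently, for $\theta\ge0$ you get the same expression with $\le0$ instead of $=0$, and this is exactly the form invoked later in \eqref{weakentropy2}. You should state that version rather than assert that \eqref{entropw} holds ``in that sense''.
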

\begin{proof}
 First, we prove that (i) $\Rightarrow$ (ii). Therefore, we consider the linear mapping 
 $ \f l : \C^1( [0,T];W^{1,p}(\Omega)) \to \R $ for $p>d$ given by 
 \begin{align*}
     \langle \f l, \theta \rangle :=  &-\int_\Omega \ln \sigma \theta \, \de x \Big |_0^T 
        + \int_0^T \int_\Omega \theta | \nabla \ln \sigma|^2 
        - \nabla \ln \sigma \cdot \nabla \theta 
        - \chi \theta \nabla\ln \sigma \cdot \nabla \varphi\, \de x \\ 
     & +\int_{\Omega} \chi \nabla \varphi \cdot \nabla \theta 
         + \alpha (\varphi,\sigma) \theta 
         + \ln \sigma \t \theta \, \de x \, \de t \,.
 \end{align*}
 This is a linear and continuous mapping on     $\C^1( [0,T];W^{1,p}(\Omega))$. 
 Moreover, we define 
 $ p : \C( \ov\Omega \times [0,T];\R)\to \R$ via 
 $$
 p(\theta):= \| \theta_-\|_{\C(\ov\Omega\times [0,T])}\left[ \zeta
    -\int_0^T \int_\Omega| \nabla \ln \sigma |^2 \, \de x \, \de t \right].
 $$ 
 Then, due to~\eqref{weakentropy3}, it holds that 
 $ \langle \f l , \theta\rangle \leq p(\theta)$ for 
 all $\theta\in \C^1( [0,T];W^{1,p}(\Omega))$.
 
 Hahn--Banach's theorem~\cite[Thm.~1.1]{brezis} guarantees that the 
 functional $\f l$ can be extended to a continuous linear mapping on 
 $\tilde{\f l }:\C( \ov\Omega \times [0,T];\R)\to \R $
 such that $ \langle \tilde{\f l}, \theta \rangle \leq p(\theta)$
 for all $\C( \ov\Omega \times [0,T];\R)$ and 
 $ \langle \f l ,\theta \rangle = \langle \tilde{\f l}, \theta \rangle $ 
 for all $\C^1( [0,T];W^{1,p}(\Omega))$. 
 Via Riesz' representation theorem, such a linear functional can be interpreted 
 as a measure, \textit{i.e.,} there exists $ \xi\in \mathcal M(\ov\Omega\times [0,T])$ 
 such that 
 $\langle \tilde{\f l} , \theta \rangle= - \int_0^T\int_{\Omega} \theta\,\de \xi(x,t)$. 
 The properties of $\tilde{\f l} $ imply the equations~\eqref{entromeas} and the 
 inequality~\eqref{measineq}. Choosing $ \theta \in \C(\ov\Omega\times [0,T]; \R)$
 with $ \theta_- =0$ implies that 
 $\int_0^T\int_{\ov\Omega} \theta \, \de \xi  (t,x) \geq 0 $
 for all $\theta \in \C(\ov\Omega\times [0,T];[0,\infty))$,
 which is nothing else than  $\xi\in \mathcal{M}^+(\ov\Omega \times [0,T])$. 
 
 \smallskip
 
 Next, the implication (ii)$\Rightarrow$(i) can be obtained by adding the 
 inequality~\eqref{measineq} to~\eqref{entromeas}. 
 
 Finally, by the usual definition of the weak time derivative 
 $$  - \langle \t \ln \sigma , \theta \rangle 
  =  - \langle \ln \sigma, \theta \rangle \Big |_0^T
    + \int_0^T\int_\Omega \ln \sigma \t \theta \, \de x\, \de t,
 $$
 we observe the regularity of the weak time derivative by comparison in~\eqref{entromeas}
 and conclude that the resulting relation also holds point-wise a.e.~in time, which concludes the proof.
\end{proof}
\begin{remark}[Logarithmic inequality]\label{rem:login}
 As noted in Remark~\ref{rem:weakstrong}, we enriched the usual weak formulation by a
 logarithmic inequality so to ensure weak-strong uniqueness. 
 But usually, this inequality is formulated in a weaker way, \textit{i.e.}, if we insert  
 $  \theta = \phi\vartheta$ for $\phi \in \C^1([0,T); [0,\infty))$ and 
 $ \vartheta \in \C^1( [0,T], W^{1,p}(\Omega))$ for $p>d$
with $\vartheta \geq0$ in~\eqref{weakentropy}, 
 we infer from Lemma~\ref{lem:invar} with $s=0$  that
  \begin{multline*}
       - \langle \ln \sigma ,\vartheta \rangle  \Big |_0^t 
          + \int_0^t \int_\Omega \vartheta | \nabla \ln \sigma|^2 
             - \nabla \ln \sigma \cdot \nabla \vartheta 
             - \chi \vartheta \nabla\ln \sigma \cdot \nabla \varphi 
             + \chi \nabla \varphi \cdot \nabla \vartheta \, \de x \, \de \tau \\  
    + \int_0^t\int_\Omega \alpha(\varphi,\sigma)\vartheta + \ln \sigma \t \vartheta \, \de x \, \de \tau
      \leq 0 
\end{multline*}
holds for   a.e.~$0 <  t \leq T$ 
with $\ln \sigma(0)= \ln \sigma_0$ and all $ \vartheta \in \C^1( [0,T], W^{1,p}(\Omega))$ for $p>d$
with $\vartheta \geq0$. 
From the Remark~\ref{rem:pointwise}, we even infer that we may understand $ \ln \sigma $ pointwise in time in $L^1(\Omega)$.   Moreover, setting $ \ln \sigma (t)= \ln \sigma (t+)$, we 
may  also deduce the ``standard'' form of the inequality:
\begin{multline}\label{en:new}
       - \int_{\Omega}
        \ln \sigma \vartheta \, \de x   \Big |_0^t 
          + \int_0^t \int_\Omega \vartheta | \nabla \ln \sigma|^2 
             - \nabla \ln \sigma \cdot \nabla \vartheta 
             - \chi \vartheta \nabla\ln \sigma \cdot \nabla \varphi 
             + \chi \nabla \varphi \cdot \nabla \vartheta \, \de x \, \de \tau \\  
    + \int_0^t\int_\Omega \alpha(\varphi,\sigma)\vartheta + \ln \sigma \t \vartheta \, \de x \, \de \tau
      \leq 0 
\end{multline}
for all $t\in [0,T]$ and all $ \vartheta \in \C^1( [0,T], W^{1,p}(\Omega))$ with $p>d$. 

Actually,   the formulation \eqref{en:new} of the  entropy inequality may  
suffice to prove weak-strong uniqueness of solutions, as we will
see in Section~\ref{sec:weakstrong}.   The advantage of adding the error term
depending on $\zeta$ (cf.~\eqref{weakentropy}) stands in the fact that, then,
the formulation also becomes weakly closed; namely, every cluster point 
of a sequence of solutions in the sense of Definition~\ref{def:weak} 
is also a solution, as we will prove in Proposition~\ref{prop:stab}, below. 
Note also that the energy inequality~\eqref{enin} is not required 
for proving weak-strong uniqueness;
however, it is crucial (also as a source of estimates) in the proof of 
weak sequential stability. As noted in the statement of 
Theorem~\ref{thm:giulio}, if $ \alpha \leq 0 $ a.e., 
the correction term depending on $Z$ in~\eqref{enin} can be omitted;
namely, a standard form of the energy inequality holds in that case. 
On the other hand, for general $\alpha$, the integrand
$\alpha(\fhi,\sigma)\sigma\ln\sigma$ cannot be managed by (lower) semicontinuity 
methods, and this is why the correction term appears. Without 
it, the energy inequality would also cease to be 
weakly sequentially stable (see Subsec.~\ref{subsec:enen} below 
 for more details). 
\end{remark}


\section{Global existence of weak solutions}
\label{sec:global}


\subsection{Proof of Theorem~\ref{thm:global}: regularized system}

In order to prove the existence of weak solutions in the sense of Definition~\ref{def:weak}, 
we take a regularization parameter $\epsi\in(0,1)$ intended to go to $0$ in the limit and
consider the following approximated system:
\begin{subequations}\label{eq:approx}
\begin{align}
 \t \rp - \Delta \rmu = 0,\qquad \rmu = - \Delta \rp + F'(\rp) - \chi \rs, 
 &\quad \text{in }\Omega \times (0,T),\label{eq:1approx}\\
 \t \rs - \Delta \rs - \chi \dive \Big ( \frac{\rs}{1+\varepsilon \rs }\nabla  ( 1-\rp) \Big)
  = \alpha(\rp,\rs) \rs,  &\quad \text{in }\Omega \times (0,T), \label{eq:2approx}
\end{align}
equipped with the boundary conditions 
\begin{align}\label{eq:3approx}
 \f n \cdot \nabla \rmu = 0 = \f n \cdot \nabla \rp, \qquad 
 \f n \cdot \Big( \nabla \rs
   +\frac{\chi \rs}{1+\varepsilon \rs }\nabla  ( 1-\rp)\Big) = 0,
   \quad \text{on }\partial\Omega \times (0,T)
\end{align}
and the initial conditions
\begin{align}\label{eq:4approx}
  \rp (0) = \varphi_0 ^\varepsilon,\qquad\sigma(0) =\sigma_0^\varepsilon, \quad \text{in } \Omega,
\end{align}
\end{subequations}
 where $\varphi_0^\varepsilon$ and $\sigma_0^\varepsilon$ are suitable regularizations
 of the initial data (more details will be given below).
System~\eqref{eq:approx} was addressed in the recent manuscript~\cite{GiulioNew}, 
where it was part of a more complicated model of {Cahn}-{Hilliard}-{Brinkman} 
type also accounting for macroscopic velocity effects. Neglecting here the 
velocity and adapting the statement to our notation, the result proved 
in \cite[Thm.~2.3]{GiulioNew} in the case $p=2$ (i.e., taking a bounded
sensitivity as in \eqref{eq:2approx}, \eqref{eq:3approx})
can be formulated as follows:
\begin{theorem}[Weak solutions to the regularized system]\label{thm:giulio}
 Let $F$ be given by \eqref{Flog} and let $\epsi\in(0,1)$.
 Let $\fhi_0^\epsi$, $\sigma_0^\epsi$ satisfy\/ 
 \eqref{hp:fhi0}, \eqref{hp:sigma0}, and,
 in addition, let
 \begin{equation}\label{sigma0l2}
   \sigma_0^\varepsilon \in H, \qquad
    \ln \sigma_0^\varepsilon \in L^1(\Omega).
 \end{equation}
 Then, there exists at least one triple $(\fhi\ee,\mu\ee,\sigma\ee)$ with the regularity
 properties
 \begin{align}\label{rego:fhi1}
  & \fhi\ee \in H^1(0,T;V')\cap L^\infty(0,T;V) 
    \cap L^{4}(0,T;H^2(\Omega)) \cap L^2(0,T,W^{2,6}(\Omega)),\\
 \label{rego:Ffhi1}
  & F(\fhi\ee) \in L^\infty(0,T;L^1(\Omega)),
   \qquad F'(\fhi\ee) \in L^2(0,T;L^{6}(\Omega)), \\
 \label{rego:mu1}
  & \mu\ee\in L^2(0,T;V) ,\\ 
 \label{rego:sigma1}
  & \sigma\ee \in H^1(0,T;V') \cap L^\infty(0,T;H) \cap L^2(0,T;V),\\
 \label{pos:sigma1}
  & \sigma\ee>0~~\text{a.e.\ in }\,\Omega\times(0,T), \qquad
   \ln \sigma\ee \in  L^\infty(0,T;L^1(\Omega))  \cap L^2(0,T;V),
 \end{align}
 and such that the following weak relations hold a.e.~in $(0,T)$ 
 for every $\psi,\vartheta\in V$:
 \begin{align}
  & \langle \t  \varphi\ee ,  \psi \rangle\,   
   + \int_\Omega \nabla \mu\ee \cdot \nabla \psi \, \de x \,  
    = 0,\label{weak:phie}\\
  & \langle \t \sigma\ee , \vartheta \rangle 
   + \int_\Omega \Big( \nabla \sigma\ee + \chi \frac{\rs}{1+\varepsilon \rs } \nabla (1- \fhi\ee) \Big) 
           \cdot \nabla \vartheta \,\de x
    = \int_\Omega  \alpha (\varphi\ee,\sigma\ee) \sigma\ee \vartheta \, \de x,
    \label{weak:sige}
 \end{align}
 together with the relations 
 \begin{equation}
    \rmu = - \Delta \rp + F'(\rp) - \chi \rs, \qquad 
     \dn \rp = 0,
 \end{equation}
 intended to hold pointwise, respectively in $\Omega\times(0,T)$ and 
 on $\partial\Omega\times(0,T)$, and the initial conditions \eqref{eq:4approx},
 intended pointwise in $\Omega$.
\end{theorem}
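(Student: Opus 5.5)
This statement is quoted from \cite[Thm.~2.3]{GiulioNew}, so in the paper it is taken as given; if one wants a self-contained argument, the plan is a Faedo--Galerkin scheme combined with regularization of the singular potential. First, replace $F$ by a smooth $\lambda$-convex approximation $F_\delta$, obtained by substituting $\beta$ with its Yosida approximation $\beta_\delta$. Then project \eqref{eq:1approx}--\eqref{eq:2approx} onto finite-dimensional spaces spanned by eigenfunctions of the Neumann Laplacian. The resulting ODE system has locally Lipschitz nonlinearities, since the sensitivity $r\mapsto r/(1+\epsi r)$ is bounded and Lipschitz and $\alpha$ is Lipschitz and bounded, so local solvability follows from Cauchy--Lipschitz.

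The a priori estimates, uniform in the Galerkin dimension and in $\delta$, come from testing in the following way.
\begin{itemize}
\item Test the Cahn--Hilliard equation with $\mu\ee$, and the $\mu$-relation with $\partial_t\fhi\ee$.
\item Test \eqref{weak:sige} with $\sigma\ee$. Because $\epsi>0$, the cross-diffusion flux obeys $|\rs/(1+\epsi\rs)|\le 1/\epsi$, so Young's inequality bounds $\chi\int \frac{\rs}{1+\epsi\rs}\nabla\rp\cdot\nabla\rs$ by $\frac12\|\nabla\rs\|^2+C_\epsi\|\nabla\rp\|^2$.
\item Handle the coupling term $\chi\int\sigma\ee\partial_t\fhi\ee$ in the energy by testing the $\mu$-relation with $-\Delta\fhi\ee$.
\end{itemize}
Combining these with Gronwall's lemma yields $\sigma\ee\in L^\infty(H)\cap L^2(V)$, $\fhi\ee\in L^\infty(V)$ and $\nabla\mu\ee\in L^2(H)$. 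The mean of $\mu\ee$ is controlled by the standard Miranville--Zelik inequality $\io|\beta_\delta(\fhi)|\le C\io\beta_\delta(\fhi)(\fhi-\ov\fhi)+C$, which uses $\ov{\fhi_0}\in(-1,1)$. Elliptic regularity then gives $\fhi\ee\in L^2(W^{2,6})$, $F'(\fhi\ee)\in L^2(L^6)$ and $L^4(H^2)$. Letting the Galerkin dimension and then $\delta\to0$ by Aubin--Lions compactness recovers the singular potential and the bound $|\fhi\ee|<1$ a.e.

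The positivity statement \eqref{pos:sigma1} is the step I expect to be the main obstacle. Nonnegativity follows by testing with $-(\sigma\ee)_-$, since the flux vanishes where $\sigma\ee=0$. For strict positivity and $\ln\sigma\ee\in L^\infty(L^1)\cap L^2(V)$, the plan is to test with $-1/(\sigma\ee+\eta)$ on the limit problem, which is licit there by $V$-regularity. This gives
\[
-\ddt\io\ln(\sigma\ee+\eta)+\io\frac{|\nabla\sigma\ee|^2}{(\sigma\ee+\eta)^2}
=\chi\io\frac{\sigma\ee}{(1+\epsi\sigma\ee)(\sigma\ee+\eta)^2}\nabla\fhi\ee\cdot\nabla\sigma\ee-\io\alpha\frac{\sigma\ee}{\sigma\ee+\eta}.
\]
The cross term is bounded by $\frac12\io|\nabla\ln(\sigma\ee+\eta)|^2+C\|\nabla\fhi\ee\|^2$, and the last term is bounded by $\|\alpha\|_\infty|\Omega|$. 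Integrating in time, using $\ln\sigma_0^\epsi\in L^1$ together with the upper bound $\ln(\sigma+\eta)\le\sigma+\eta$, and letting $\eta\to0$ by monotone convergence gives the uniform $L^\infty(L^1)$ bound on $(\ln\sigma\ee)_-$ and the $L^2(V)$ gradient bound. This in turn forces $\sigma\ee>0$ a.e. Finally, pass to the limit in the weak formulations \eqref{weak:phie}--\eqref{weak:sige}, using strong convergence of $\sigma\ee$ in $L^2(L^2)$ and of $\nabla\fhi\ee$ in $L^2(L^2)$.
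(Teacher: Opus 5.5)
Your outline is correct. Note, though, that the paper contains no proof of this statement: it is imported from \cite[Thm.~2.3]{GiulioNew} (case $p=2$, velocity switched off). Your Galerkin--Yosida construction therefore replaces a citation rather than competing with an argument in the text.

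The only part the paper carries out itself, in Subsection~\ref{subsec:apriori}, is your final step. It tests \eqref{eq:2approx} with $(\sigma\ee+\delta)^{-1}$ (see \eqref{giu:a5}), bounds the cross term exactly as you do, obtains \eqref{st:18}, and removes $\delta$ through \eqref{st:e1}--\eqref{st:e3}. The one difference is that the paper bounds $\|\nabla\fhi\ee\|$ by the $\epsi$-uniform estimate \eqref{st:11}, which comes from testing with $\ln\sigma\ee+\epsi(\sigma\ee-1)+\chi(1-\fhi\ee)$. Your $L^2$-type estimate with $\epsi$-dependent constants is instead the one available at the Galerkin level, where a logarithmic test is not admissible, and it is enough for fixed $\epsi$. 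Your remark that the test with $(\sigma\ee+\eta)^{-1}$ is already justified for $\sigma\ee\in L^2(0,T;V)\cap H^1(0,T;V')$ is also correct. It avoids the smoothing of $\sigma_0^\epsi$ that the paper invokes to make this chain rule rigorous.

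Two details in your construction need repair.
\begin{enumerate}
\item Galerkin approximants have no sign, and $r\mapsto r/(1+\epsi r)$ is singular at $r=-1/\epsi$. It is therefore not Lipschitz on $\RR$, and you must replace it by a bounded Lipschitz extension such as $r_+/(1+\epsi r_+)$. This extension is also what makes the flux vanish on $\{\sigma\ee<0\}$, the support of $\nabla(\sigma\ee)_-$, in your nonnegativity argument; vanishing at $\sigma\ee=0$ alone does not give that.
\item The test of the $\mu$-relation with $-\Delta\fhi\ee$ does not control the coupling term. That term is handled by writing $\chi\langle\t\fhi\ee,\sigma\ee\rangle=-\chi(\nabla\mu\ee,\nabla\sigma\ee)$ and absorbing it into $\frac12\|\nabla\mu\ee\|^2$ plus a sufficiently large multiple (larger than $\chi^2$) of your $\sigma$-estimate. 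The test with $-\Delta\fhi\ee$ is what gives $\|\Delta\fhi\ee\|^2\le c(1+\|\nabla\mu\ee\|+\|\sigma\ee\|^2)$, cf.\ \eqref{giu:b4}, and hence the $L^4(0,T;H^2(\Omega))$ bound.
\end{enumerate}
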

\noindent%
In the sequel, starting from the above existence theorem, we will let 
$\epsi\searrow 0$ so to get in the limit, up to taking a suitable subsequence,
a solution in the sense of Def.~\ref{def:weak}.
In order for this argument to work, we need however to 
reinforce a bit our assumptions on the regularized initial data 
and to specify how they behave with respect to the regularization
parameter $\epsi$. Concerning $\fhi_0^\varepsilon$, as 
the same condition \eqref{hp:fhi0} is required both in the statement
of Theorem~\ref{thm:global} and in that of Theorem~\ref{thm:giulio},
there is nothing to modify and we can simply assume $\fhi_0^\varepsilon\equiv \fhi_0$,
independently of $\epsi$.   Conversely,
in order to construct $\sigma_0^\varepsilon$,
we preliminarly notice that, comparing \eqref{sigma0l2} with 
Hypothesis~\ref{hypo:data}, two differences
arise: the additional 
summability required in \eqref{sigma0l2} and the lack of the second 
of \eqref{hp:sigma00} in the statement of Theorem~\ref{thm:giulio}.
As already observed, the latter condition is required to prove the additional summability of $\sigma$ close to $0$, a property
which was not observed in \cite{GiulioNew}, while the first of \eqref{sigma0l2} 
is necessary to obtain the parabolic
regularity \eqref{rego:sigma1} at the regularized level. Then,  
starting from $\sigma_0$ as in Hypothesis~\ref{hypo:data}
and taking $\sigma_0^\varepsilon = \min(\sigma_0,\epsi^{-1/2})$, it is readily seen 
that the resulting family $\{\sigma_0^\epsi\}$ complies with \eqref{sigma0l2}.
More precisely, by the dominated convergence theorem, one has
\begin{align}
  \label{sigmaepsi0}
   & \sigma_0^\varepsilon \to \sigma_0 \quad\text{strongly in }\,L^1(\Omega),\\  
  \label{sigmaepsi01}
   & (\ln \sigma_0^\epsi)_- =(\ln\sigma_0)_-
    \quad\text{for all }\, \epsi\in(0,1), \\  
  \label{sigmaepsi1}
   & \epsi^{1/2} \| \sigma_0^\varepsilon \| \le c,
\end{align}
with $c$ independent of $\epsi$. 
It is also worth observing that, due to the choice of the singular potential \eqref{Flog}, 
\eqref{rego:Ffhi1} implies in particular
\begin{equation}\label{trepunti}
  | \rp(x,t) | < 1 \quad\text{for a.e.~}\,(x,t)\in\Omega\times (0,T)
\end{equation}
and every $\epsi\in(0,1)$.
\begin{remark}\label{rem:rego}
 The regularity proved in the above theorem appears to be optimal for what regards $\fhi\ee$; 
 actually, for the Cahn-Hilliard equation with logarithmic potential \eqref{Flog}, 
 in three space dimensions, even for $C^\infty$ initial data, it is not known whether 
 \eqref{rego:fhi1}, \eqref{rego:Ffhi1} could be improved, at least as far as {\sl global
 in time}\/ estimates are looked for. On the other hand, as observed in \cite[Rem.~2.6]{GiulioNew}, 
 assuming better conditions (compared to \eqref{sigma0l2}) on the initial datum,
 it is rather simple to improve the regularity of $\sigma\ee$, provided that
 the regularity of $\fhi\ee$ (on which depends the cross-diffusion term)
 allows for that. Indeed, in the case of bounded sensitivity, 
 \eqref{eq:2approx} basically behaves as a standard parabolic equation
 of quasi-linear type. 
 We also remark that the regularity properties 
 \eqref{rego:fhi1}-\eqref{pos:sigma1} refer to the three-dimensional case; for $d=2$, 
 a slightly improved statement holds; indeed, as already noted, the 
 ``Sobolev'' exponents $6$ in \eqref{rego:fhi1},
 \eqref{rego:Ffhi1} in 2D may be replaced by
 any exponent $P\in[1,\infty)$.
\end{remark}


\subsection{Proof of Theorem~\ref{thm:global}: a priori estimates}
\label{subsec:apriori}

In this part we consider an $\epsi$-dependent family $(\fhi\ee,\mu\ee,\sigma\ee)$
of weak solutions to system \eqref{eq:approx}, as provided by Theorem~\ref{thm:giulio},
and derive multiple \textit{a priori} estimates, independent of $\epsi$,
in order to subsequently let $\epsi\searrow 0$ and attain a solution to the 
original system in the sense of Definition~\ref{def:weak}. 
We report the procedure in some detail since the argument provides
a good insight about the basic variational and structure properties of the system.

First of all, integrating~\eqref{eq:2approx} in space and time, using 
Hypothesis~\ref{hypo:weak} and the comparison principle for ODE's,
we immediately  observe an estimate of the mass of $\rs$, \textit{i.e.,}
\[ 
   e^{\underline{\alpha} t} \int_{\Omega}\sigma_0 \, \de x
   \le \int_{\Omega}\rs(t) \, \de x 
    \leq e^{\overline{\alpha} t} \int_{\Omega}\sigma_0 \, \de x 
      \quad \text{ for all }\,t\in[0,T].
\]
Analogously, integration of the first \eqref{eq:1approx} provides the mass
conservation property
\begin{equation}\label{massco}
  \int_{\Omega}\fhi\ee(t) \, \de x 
    =  \int_{\Omega}\fhi_0 \, \de x \quad \text{ for all }\,t\in[0,T].
\end{equation}
Additionally, we may observe that condition \eqref{hp:sigma0} and 
a simple argument based on Stampacchia's truncations method ensure that 
\begin{equation}\label{posi}
   \sigma\ee(x,t)\ge 0 \quad\text{for a.e.~}\,(x,t)\in \Omega\times(0,T).
\end{equation}
Next, in order to derive the energy-dissipation law, we multiply~\eqref{eq:2approx} by 
$\ln \rs + \varepsilon (\rs -1) + \chi (1-\rp) $, add~\eqref{eq:1approx}$_1$ 
multiplied by $\rmu$ as well as \eqref{eq:1approx}$_2$ multiplied by $\t \fhi\ee$,
and integrate over $\Omega$. Then, we note that \eqref{eq:2approx} can be
equivalently rewritten as 
\begin{equation}\label{sig:weak}
 \t \rs - \di \Big[ \frac{\sigma\ee}{1+\epsi\sigma\ee} \nabla \big( 
   \ln \sigma\ee + \epsi (\sigma\ee - 1) + \chi ( 1-\rp) \big) \Big]
  = \alpha(\rp,\rs)\rs,
\end{equation}
from where, by simple calculations, we arrive at the estimate
\begin{multline}\label{enregest}
   \ddt \int_{\Omega}\rs(\ln\rs-1) 
    + \frac{1}{2} |\nabla \rp|^2 + F(\rp)+\chi \rs (1-\rp)+ \frac{\varepsilon}{2}(\rs-1)^2 \,\de x \\
    + \io\frac{\rs}{1+\varepsilon\rs} \big| \nabla ( \ln \rs + \varepsilon(\rs -1)+ \chi (1-\rp)) \big|^2
     + | \nabla \rmu|^2 \,\de x \\
    \leq \int_\Omega \alpha(\rp,\rs)\rs \big( \ln \rs + \varepsilon (\rs-1)
       +\chi (1-\rp)\big)\,\de x.
\end{multline}
%
%
Accordingly with the above relation, we introduce the approximate energy as
\begin{equation}\label{defi:Eee}
  \mathcal{E}_\varepsilon( \varphi\ee, \sigma\ee):=  \int_{\Omega}\sigma\ee (\ln\sigma\ee -1) 
    + \frac{1}{2}|\nabla \varphi\ee  |^2 + F(\varphi\ee  )+\chi \sigma\ee  (1-\varphi\ee  )
    + \frac{\varepsilon}{2}(\sigma\ee -1)^2 \,\de x.
\end{equation}
Then, it is worth observing that there exist constants $\kappa>0$ and $C_0>0$, independent
of $\epsi$, such that the $\calE\ee$ enjoys the following uniform
coercivity property:
\begin{equation}\label{coerc:Eee}
  \mathcal{E}_\varepsilon( \varphi\ee, \sigma\ee) 
    \ge \kappa \Big( \| \sigma\ee \ln (1 + \sigma\ee) \|_{L^1(\Omega)}
       +  \| \fhi\ee \|_{V}^2
       + \| F(\fhi\ee) \|_{L^1(\Omega)}
       + \epsi \| \sigma\ee - 1 \|^2 \Big) - C_0.
\end{equation}
Note in particular that $C_0$ depends on the assigned parameters $\lambda$ and $\chi$.
With similar arguments one can also deduce the boundedness condition
\begin{equation}\label{bound:Eee}
  \mathcal{E}_\varepsilon( \varphi\ee, \sigma\ee) 
    \le c \Big( \| \sigma\ee \ln (1 + \sigma\ee) \|_{L^1(\Omega)}
       +  \| \fhi\ee \|_{V}^2
       + \| F(\fhi\ee) \|_{L^1(\Omega)}
       + \epsi \| \sigma\ee - 1 \|^2 + 1 \Big).
\end{equation}
Here and below, $c>0$ denotes a generic positive constant depending only on the 
assigned problem data and independent of the approximation parameter $\epsi$.
Note that \eqref{coerc:Eee} and \eqref{bound:Eee} are based on Hypothesis~\ref{hypo:weak}
and on relation \eqref{trepunti}, which, in turn, depends in an essential way 
on the fact that the logarithmic potential \eqref{Flog} is {\sl not}\/ regularized. 

Next, it is not difficult to prove the following control
of the right-hand side of \eqref{enregest}:
\begin{equation}\label{rhs:11}
  \int_\Omega \alpha(\rp,\rs)\rs \big( \ln \rs + \varepsilon (\rs-1)
       +\chi (1-\rp)\big) \,\de x
    \le c \big( 1 + \mathcal{E}\ee(\fhi\ee,\sigma\ee) \big).
\end{equation}
Hence, thanks to Gr\"onwall's lemma, 
\eqref{enregest} entails the following a priori bounds:
\begin{align}\label{st:11}
  & \| \fhi\ee \|_{L^\infty(0,T;V)} 
   + \| F(\fhi\ee) \|_{L^\infty(0,T;L^1(\Omega))} \le c,\\
 \label{st:12}
  & \| \sigma\ee \ln (1 + \sigma\ee) \|_{L^\infty(0,T;L^1(\Omega))} \le c,\\
 \label{st:13} 
  & \Big\| \frac{\rs^{1/2}}{(1+\varepsilon\rs)^{1/2}} 
        \nabla \big( \ln \rs + \varepsilon(\rs -1)+ \chi (1-\rp)\big ) \Big\|_{L^2(0,T;H)} \le c,\\
 \label{st:14} 
  & \| \nabla \mu\ee \|_{L^2(0,T;H)} \le c,\\
 \label{st:11new} 
  & \epsi^{1/2} \| \sigma\ee \|_{L^\infty(0,T;H)} \le c.
\end{align}
Note in particular that, to control the energy at $t=0$
uniformly in $\epsi$, we have used that the approximating initial 
data are designed so to satisfy \eqref{sigmaepsi0}-\eqref{sigmaepsi1}.

To proceed, we test the second \eqref{eq:1approx} by $\fhi\ee - m_0$,
where we have set $m_0:= |\Omega|^{-1}\io\fhi\ee\,\de x = |\Omega|^{-1}\io\fhi_0\,\de x$,
where the second inequality follows from  the mass conservation \eqref{massco} and the 
fact that $\fhi_0$ is taken independent of $\epsi$ (namely, we did not need
to regularize it). Then, we obtain
\begin{equation}\label{MZ:01}
  \| \nabla \fhi\ee \|^2  
   + \int_\Omega F'(\fhi\ee) ( \fhi\ee - m_0 )\, \de x
   = \int_\Omega ( \mu\ee + \chi\sigma\ee ) ( \fhi\ee - m_0 )\, \de x.
\end{equation}
Next, we observe that the following relation:
\begin{equation}\label{MZ:11}
  \int_\Omega F'(\fhi\ee) ( \fhi\ee - m_0 )\, \de x
   \ge \kappa \| \beta(\fhi\ee) \|_{L^1(\Omega)} - c,
\end{equation}
holds for suitable constants $\kappa>0$ and $c\ge 0$ independent of $\epsi$
 (but depending on the datum $m_0$),
where we recall that $\beta$ denotes the ``monotone part'' of $F'$.
Actually, to to prove the above, one can proceed as in \cite{MiranvilleM2AN2004}
to which we refer the reader for details. Concerning the last integral on the 
right-hand side of \eqref{MZ:01}, we first observe that, by the Poincar\'e-Wirtinger inequality, 
there follows 
\begin{equation}\label{MZ:02}
  \int_\Omega \mu\ee ( \fhi\ee - m_0 )\, \de x
 = \int_\Omega \Big( \mu\ee - |\Omega|^{-1}\io\mu\ee \,\de x \Big)
   ( \fhi\ee - m_0 )\, \de x
   \le c \| \nabla\mu\ee \| \| \nabla\fhi \ee \| 
   \le c \| \nabla\mu\ee \|,
\end{equation}
the last inequality following from \eqref{st:11}. Next, by \eqref{trepunti}
and \eqref{st:12},
\begin{equation}\label{MZ:03}
  \int_\Omega \chi\sigma\ee ( \fhi\ee - m_0 )\, \de x
   \le c \| \sigma\ee \|_{L^1(\Omega)} \le c.
\end{equation}
Replacing \eqref{MZ:11}-\eqref{MZ:03} into \eqref{MZ:01}
and squaring the resulting relation,  
it is then not difficult to deduce the additional bounds
(see also \cite[Sec.~3]{GiulioNew} for further details)
\begin{align}\label{st:15}
  & \| \beta(\fhi\ee) \|_{L^2(0,T;L^1(\Omega))} + \| \mu\ee \|_{L^2(0,T;V)} \le c,\\
 \label{st:16}
  & \| \t \fhi\ee \|_{L^2(0,T;V')} \le c.
\end{align} 
The bounds derived so far depend directly on the fundamental physical balance 
laws and basically coincide with those proved in \cite{GiulioNew}. The subsequent
part of the proof instead is totally new as we now look for estimates
in a weaker regularity setting, which corresponds, roughly speaking, to the 
Orlicz space usually denoted as $L\log L$ (see, e.g., \cite{Orlicz}). 

To this aim, we may test the second of \eqref{eq:1approx} by 
$\rho(\fhi\ee):=\ln(1 + |\beta(\fhi\ee)|) \sign(\fhi\ee)$,
which is an admissible test function because,
thanks to the second of \eqref{rego:Ffhi1}, $\rho(\fhi\ee)\in L^P(\Omega\times (0,T))$ for
every $P\in[1,\infty)$. Note also that
the second of \eqref{eq:1approx} holds (at least) 
as a relation in $L^2(0,T;H)$ and that, thanks to the fact $\beta(0)=0$, 
$\rho$ is a monotone function. Now, a well-known integration by parts formula 
for (possibly singular) monotone operators gives
\begin{equation}\label{giu:a1}
  - \io \rho(\fhi\ee) \Delta \fhi\ee \,\de x \ge 0.
\end{equation}
The above can be proved for instance replacing $\rho$ with its Yosida 
approximation (see, \cite{barbu,brezis}), proving the formula at the regularized
level, and then taking the limit (see also \cite[Lemma~2.4]{SP} for a similar
argument). 
Hence, based on the above considerations, the procedure implies
\begin{equation}\label{giu:a1a}
  \io \ln(1 + |\beta (\fhi\ee)|) | \beta(\fhi\ee) | \, \de x
   \le \io ( \mu\ee + \lambda \fhi\ee) \rho(\fhi\ee) \, \de x
   + \chi \io \sigma\ee \rho(\fhi\ee)\, \de x. 
\end{equation}
Then, in order to control the right-hand side, we first notice that 
\begin{equation}\label{giu:a2}
  \io ( \mu\ee + \lambda \fhi\ee) \rho(\fhi\ee) \, \de x
   \le \frac12 \| \mu\ee \|^2 + \frac{\lambda^2}2 \| \fhi\ee \|^2
   + \| \rho(\fhi\ee) \|^2.
\end{equation}
Moreover, it is clear that 
\begin{equation}\label{giu:a3}
  \| \rho(\fhi\ee) \|^2 
  = \io \ln^2(1 + |\rho(\fhi\ee)|) \, \de x
  \le \frac14   \io \ln(1 + |\beta (\fhi\ee)|) | \beta(\fhi\ee) | \, \de x
   + c,
\end{equation}
with $c$ independent of $\epsi$. The control of the last term in \eqref{giu:a1a} 
is a bit more delicate. To get it, we need to observe that $\psi(r)=e^r$, $r\in\RR$, and 
$\psi^*(s)= s(\ln s - 1)$, $s\ge 0$ (in fact $\psi^*$ may be intended to be 
identically $+\infty$ for $s<0$) are convex conjugate functions. Hence, we may apply the 
Fenchel-Young inequality, which gives (recall that $\sigma\ee\ge 0$ a.e.)
\begin{align}\nonumber
  \chi \io \sigma\ee \rho(\fhi\ee)\, \de x
   & \le \chi \io \psi^*(\sigma\ee)\, \de x + \chi \io \psi(|\rho(\fhi\ee)|) \, \de x \\
 \nonumber                                                                                                                                                                              
   & = \chi \io \sigma\ee (\ln \sigma\ee - 1) \, \de x 
    + \chi \io ( 1 + |\beta(\fhi\ee) | ) \, \de x\\
 \label{giu:a4}
  & \le \chi \io \sigma\ee \ln ( 1 + \sigma\ee) \, \de x 
    + \frac14   \io \ln(1 + |\beta (\fhi\ee)|) | \beta(\fhi\ee) | \, \de x 
    + c,
\end{align}
where the last inequality follows from elementary considerations,
similarly with \eqref{giu:a3}. 
Replacing \eqref{giu:a2}-\eqref{giu:a4} into \eqref{giu:a1a}, we then obtain 
\begin{equation}\label{giu:a1b}
  \frac12 \io \ln(1 + |\beta (\fhi\ee)|) | \beta(\fhi\ee) | \, \de x
   \le \frac12 \| \mu\ee \|^2 + \frac{\lambda^2}2 \| \fhi\ee \|^2
   + \chi \io \sigma\ee \ln ( 1 + \sigma\ee) \, \de x + c,
\end{equation}
whence, integrating in time and recalling \eqref{st:11}-\eqref{st:12} and \eqref{st:15},
we deduce the additional bound 
\begin{equation}\label{st:17}
  \| \beta(\fhi\ee) \ln(1 + |\beta (\fhi\ee)|) \|_{L^1(0,T;L^1(\Omega))} \le c.
\end{equation}
Next, in order to derive an entropy estimate (which, in turn, helps us to write
a weak formulation of \eqref{eq:2approx} 
suitable for taking the limit $\epsi\searrow 0$),
we multiply~\eqref{eq:2approx} by $ \frac{1}{\rs +\delta}$ for $\delta>0$. 
Actually, as $\delta>0$ is fixed, the function $(\cdot+\delta)^{-1}$ is bounded and 
globally Lipschitz over $[0,+\infty)$; hence, recalling that $\sigma\ee\ge 0$
almost everywhere, this procedure is admissible. It is then easy to infer
\begin{align}\nonumber
  & \ddt \ln (\sigma\ee + \delta) 
   - \Delta \ln (\sigma\ee + \delta)
   - | \nabla \ln (\sigma\ee + \delta) |^2
   - \chi \dive\bigg(\frac{\sigma\ee}{(\sigma\ee + \delta)(1+\epsi\sigma\ee)}\nabla(1-\fhi\ee) \bigg)\\
 \label{giu:a5}
  & \mbox{}~~~~~ - \chi \frac{\sigma\ee}{(\sigma\ee + \delta)^2(1+\epsi\sigma\ee)}
   \nabla\sigma\ee \cdot \nabla (1 - \fhi\ee)
  = \frac{\alpha(\fhi\ee,\sigma\ee)\sigma\ee }{\sigma\ee+\delta}.
\end{align}
Note that the chain rule formulas 
used to derive the above relation are apparently
formal, but could easily be made rigorous. Actually, as observed in Remark~\ref{rem:rego}, 
the regularity \eqref{rego:sigma1} is not optimal, and, possibly smoothing out the 
initial datum, better conditions could be proved at the 
level $\epsi\in(0,1)$. For instance, the chain rule formula used to deduce
$(\sigma\ee + \delta)^{-1} \partial_t \sigma\ee = \partial_t \ln (\sigma\ee + \delta)$
is rigorous as far as $\partial_t\sigma\ee$ lies, say, in $L^2$. This is not part of 
\eqref{rego:sigma1}, but, as said, it can be shown to hold if $\sigma_0^\epsi$ is 
smoother (we omit details for brevity).

To derive an additional estimate, we multiply \eqref{giu:a5} by $-1$
and integrate over $\Omega$. Using the no-flux boundary conditions 
\eqref{eq:3approx} and noticing that 
\begin{equation}\label{giu:a6}
  \chi \bigg| \io \frac{\sigma\ee \nabla\sigma\ee \cdot \nabla (1 - \fhi\ee)}%
  {(\sigma\ee + \delta)^2(1+\epsi\sigma\ee)} \bigg|
   \le \frac14 \| \nabla \ln (\sigma\ee + \delta) \|^2 
    + c \| \nabla \fhi\ee \|^2,
\end{equation}
with $c$ being independent both of $\epsi$ and of $\delta$, noting also that
the control of the right-hand side of \eqref{giu:a5} is straightforward
thanks to the uniform boundedness of $\alpha$, we 
readily obtain the bounds
\begin{equation}\label{st:18}
  \| \ln ( \sigma\ee + \delta ) \|_{L^\infty(0,T;L^1(\Omega))} 
   + \| \ln ( \sigma\ee + \delta ) \|_{L^2(0,T;V)} \le c.
\end{equation}
Next, we consider a test function $\theta\in W^{1,p}(\Omega)$, where $p>d$ so that 
we can take advantage of the continuous embedding 
$W^{1,p}(\Omega) \subset C^0({\overline \Omega})$,
and multiply \eqref{giu:a5} by $\theta$. Then, simple considerations
permit us to deduce that
\begin{equation}\label{st:19}
  \| \partial_t \ln ( \sigma\ee + \delta ) \|_{L^1(0,T;W^{1,p}(\Omega)')} \le c,
\end{equation}
where we point out once more that the constants $c$ occurring, in particular, in
\eqref{st:18}-\eqref{st:19} are independent of $\epsi$ and $\delta$. 
Finally, in order to derive the last \textit{a priori} estimate,
we may multiply~\eqref{eq:2approx} by $\varepsilon^2 \rs$ and 
integrate over $\Omega$ in order to observe 
\begin{align}\nonumber
    \frac{\de}{\de t}\frac{\varepsilon^2}{2}\| \rs\|^2 
      + \varepsilon^2\|\nabla \rs\|^2 =
      &  \int_{\Omega} \chi \frac{\varepsilon\rs}{1+\varepsilon\rs}\nabla \rp  \cdot \varepsilon\nabla \rs 
            + \alpha(\rp,\rs) \varepsilon^2|\rs|^2 \, \de x \\
 \label{st:19b}           
    \leq & \frac{\chi^2}{2}\| \rp\|_{V}^2 
     + \frac{\varepsilon^2}{2}\|\nabla \rs\|^2 
     + \ov{\alpha} \varepsilon^2 \|\rs\|^2.
\end{align}
Hence, using also~\eqref{sigmaepsi1}, 
Gr\"onwall's lemma allows us to deduce the bound 
\begin{equation}\label{apri:eps}
    \varepsilon \| \rs \|_{L^2(0,T;V)}\leq c \,. 
\end{equation}
Now, we aim to show that we can let $\delta\searrow 0$ in \eqref{giu:a5}.
In particular, we claim that 
\begin{equation}\label{st:e1}
  \ln (\sigma\ee + \delta) \to \ln \sigma\ee 
   \quad\text{strongly in }\,L^2(0,T;V).
\end{equation}
To prove this fact, we notice that, for a.e.~$t\in(0,T)$, a.e.\
on the set $\Omega_n=\Omega_n(t):=\{\sigma\ee(t) >1/n\}$, one has 
\begin{equation}\label{st:e2}
  | \nabla \ln (\rs+\delta)|^2 
  = \frac{|\nabla \rs|^2}{(\rs+\delta)^2} 
  \le \frac{|\nabla \rs|^2}{\rs^2}
  = |\nabla \ln \rs |^2.
\end{equation}
Indeed, as far as its argument stays larger than $n^{-1}$, the function 
$\ln$ is Lipschitz; hence the above follows from
standard chain rule formulas in Sobolev spaces.
On the other hand, as $\sigma\ee (\cdot,t) > 0$ almost everywhere in $\Omega$
(and for all $t\in[0,T]$ with a possible exception of a set of zero
measure), \eqref{st:e2} holds on the union, for $n\in\NN$, of the sets $\Omega_n$,
which coincides with $\Omega$ up to a set of zero measure. Hence, 
integrating \eqref{st:e2} in space and time, one readily deduces
\begin{equation}\label{st:e3}
\begin{aligned}
    \|\nabla \ln \rs \|_{L^2(0,T;V)}^2 &\le \liminf_{\delta\searrow 0} \| \nabla \ln (\rs+\delta)\|_{L^2(0,T;V)}^2\\
    & \leq  \limsup_{\delta\searrow 0} \| \nabla \ln (\rs+\delta)\|_{L^2(0,T;V)}^2
   \le \|\nabla \ln \rs \|_{L^2(0,T;V)}^2,
\end{aligned}
\end{equation}
where the first inequality follows from the point-wise a.e.~convergence and Fatou's lemma. This 
gives the desired \eqref{st:e1}.
As a consequence, it is easy to realize that
one can take the limit $\delta\searrow 0$ in \eqref{giu:a5} so to 
deduce 
\begin{align}\nonumber
  & \ddt \ln \sigma\ee 
   - \Delta \ln \sigma\ee
   - | \nabla \ln \sigma\ee |^2
   - \chi \dive\Big(\frac{1}{1+\epsi\sigma\ee}\nabla(1-\fhi\ee) \Big)\\
 \label{giu:a5b}
  & \mbox{}~~~~~ - \chi \frac{1}{1+\epsi\sigma\ee}
   \nabla \ln\sigma\ee \cdot \nabla (1 - \fhi\ee)
  = \alpha(\fhi\ee,\sigma\ee).
\end{align}
Note that, as a byproduct of the argument, we have improved the information 
\eqref{posi} on the sign properties of $\sigma\ee$, which turns out to be 
{\sl strictly}\/ positive almost everywhere. 


\subsection{Proof of Theorem~\ref{thm:global}: passage to the limit}
\label{subsec:lim}

Our next task consists in taking the limit $\epsi\searrow 0$, and, to
this purpose, we start considering relation \eqref{eq:2approx}. First of all, we observe
that from \eqref{st:11}, \eqref{st:15}-\eqref{st:16}, and the Aubin--Lions lemma, there follows that 
\begin{align}\label{co:11}
  & \fhi\ee \to \fhi \quad\text{weakly star in }\, H^1(0,T;V') \cap L^\infty(0,T;V)
   ~~\text{and strongly in }\,L^\infty(0,T;H),\\
 \label{co:12}
  & \mu\ee \to \mu \quad\text{weakly in } L^2(0,T;V).
\end{align}
Moreover, noting that the analogue of \eqref{st:18}-\eqref{st:19} holds
for $\delta=0$ thanks to semicontinuity of norms with respect to weak or
weak star convergence, one also has
\begin{align}\label{co:13}
  & \ln\sigma\ee \to \ell \quad\text{weakly star in }\, BV(0,T;W^{1,p}(\Omega)') 
    \cap L^2(0,T;V),~~\text{strongly in }\,L^2(0,T;H),
\end{align}
for a suitable limit function $\ell$. Here and below, it is intended that all 
convergence relations hold for a suitable (nonrelabelled) subsequence
of $\epsi\searrow 0$. Note also that we applied above a generalized version of the 
Aubin--Lions lemma (see, e.g., \cite[Corollary~7.9]{RoubicekBook} or \cite{SimonComp}). 

As in particular a.e.~convergence holds
in $Q$, the equintegrability condition \eqref{st:12} readily implies
\begin{align}\label{co:14}
  & \sigma\ee = e^{\ln \sigma\ee} \to e^\ell =: \sigma
   \quad 
   \text{strongly in }\,L^P(0,T;L^1(\Omega)).
\end{align}
Here and below, $P$ will be a generic exponent such that $P\in[1,\infty)$. 
In the sequel, we will mostly privilege the notation $\sigma$ and
write $\ln \sigma$ in place of $\ell$. From \eqref{co:13}
we obtain in particular that $\sigma>0$ a.e.~in~$Q$. 
Now, \eqref{co:14} also implies that 
\begin{align}\label{co:14b}
  \sigma\ee^{1/2} \to \sigma^{1/2} \quad\text{weakly star in }\,L^\infty(0,T;H)
  ~~\text{and strongly in }\,L^P(0,T;H).
\end{align}
Similarly, we claim that 
\begin{equation}\label{co:14c}
  \frac{ \sigma\ee^{1/2} }{ (1+\epsi\rs)^{1/2} } 
     \to \sigma^{1/2} \quad\text{weakly star in }\,L^\infty(0,T;H)
  ~~\text{and strongly in }\,L^P(0,T;H).
\end{equation}
To prove the above, one may notice that 
\begin{align*}
  \left| \frac{ \sigma\ee^{1/2} }{ (1+\epsi\rs)^{1/2} } - \sigma^{1/2} \right|
  & \le \frac{ 1 }{ (1+\epsi\rs)^{1/2} } \big| \rs^{1/2} - \sigma^{1/2} \big|
   + | \sigma^{1/2} | \frac { | (1+\epsi\rs)^{1/2} - 1 |} {(1+\epsi\rs)^{1/2}}\\
  & \le \big| \rs^{1/2} - \sigma^{1/2} \big|
   + | \sigma^{1/2} | \frac { | (1+\epsi\rs)^{1/2} - 1 |} {(1+\epsi\rs)^{1/2}}
\end{align*}
and observe that, since $\sigma$ is finite almost everywhere, the last
factor tends to $0$ almost everywhere; hence, \eqref{co:14c} follows
from Lebesgue's dominated convergence theorem.
 
Let us now take the limit of the cross-diffusion term in \eqref{eq:2approx}.
First of all, we observe that, from \eqref{st:13}, there follows
\begin{equation}\label{co:15}
  \frac{\rs^{1/2}}{(1+\varepsilon\rs)^{1/2}} 
     \nabla ( \ln \rs + \varepsilon(\rs -1)+ \chi (1-\rp) )
  \to \eta \quad\text{weakly in }\,L^2(0,T;H).
\end{equation}
In order to identify the function $\eta$, we may notice that 
the bound~\eqref{apri:eps} and with the second of \eqref{st:18} (which still holds
for $\delta=0$) imply that there exists $\zeta \in L^2(0,T;H)$ such that
\begin{equation}
    \nabla( \ln  \rs + \varepsilon (\rs-1)) \to  \zeta 
   \quad\text{weakly in }\,L^2(0,T;H),
\end{equation}
which can be identified to be $\nabla \ln \sigma$ due to~\eqref{co:13} and~\eqref{st:11new}. 
Relation~\eqref{co:14c} now guarantees the convergence
\begin{equation}\label{co:16}
  \frac{\rs^{1/2}}{(1+\varepsilon\rs)^{1/2}} 
     \nabla ( \ln \rs + \varepsilon(\rs -1) )
  \to \sigma^{1/2} \nabla \ln \sigma
   \quad\text{weakly in }\,L^1(0,T;L^1(\Omega)).
\end{equation}
On the other hand, \eqref{co:11} and \eqref{co:14c} imply 
\begin{equation}\label{co:15x}
  \frac{\rs^{1/2}}{(1+\varepsilon\rs)^{1/2}} 
     \nabla ( \chi (1-\rp) )
  \to \sigma^{1/2} \nabla ( \chi (1-\fhi) )
  \quad\text{weakly in }\,L^P(0,T;L^1(\Omega)).
\end{equation}
Combining \eqref{co:16} and \eqref{co:15x}, we then conclude that 
\begin{equation}\label{id:eta}
  \eta = \sigma^{1/2} \nabla ( \ln \sigma + \chi (1-\fhi) ).
\end{equation}
Let us now consider relation \eqref{sig:weak} and notice that,
combining \eqref{co:14c} and \eqref{co:15}, using also 
\eqref{id:eta}, there follows
\begin{align}\nonumber
  & \Big( \frac{\sigma\ee}{1+\epsi\sigma\ee} \Big)
  \nabla ( \ln \sigma\ee + \epsi (\sigma\ee - 1) + \chi ( 1-\rp) ) \\
 \nonumber
  & \qquad = \Big( \frac{\sigma\ee}{1+\epsi\sigma\ee} \Big)^{1/2}
   \Big( \frac{\sigma\ee}{1+\epsi\sigma\ee} \Big)^{1/2}
      \nabla ( \ln \sigma\ee + \epsi (\sigma\ee - 1) + \chi ( 1-\rp) ) \\
 \label{co:17}
  & \qquad \to \sigma^{1/2} \eta = \sigma \nabla ( \ln \sigma + \chi (1-\fhi) )
   \quad\text{weakly in }\,L^2(0,T;L^1(\Omega)).
\end{align}
Next, using relations \eqref{co:11}, \eqref{co:14} and Hypothesis~\ref{hypo:weak}, 
it is easy to deduce
\begin{equation}\label{co:alpha}
  \alpha(\fhi\ee,\sigma\ee) \to \alpha(\fhi,\sigma)
   \quad\text{weakly star in }\,L^\infty(Q)~~
   \text{and strongly in }\,L^P(Q).
\end{equation}
This fact, combined with \eqref{co:14}, allows for taking the limit of the right-hand 
side of \eqref{sig:weak}.
Moreover, testing \eqref{sig:weak} by $\vartheta\in W^{2,p}(\Omega)$ 
for $p>d$
and using the continuous embedding $W^{2,p}(\Omega)\subset C^1({\ov \Omega})$,
we may deduce 
\begin{equation}\label{co:18}
  \partial_t \rs \to \partial_t \sigma
   \quad\text{weakly in }\,L^2(0,T;W^{2,p}(\Omega)').
\end{equation}
The above considerations suffice in order to take the limit $\epsi\searrow 0$
in \eqref{sig:weak}, interpreted as a relation in $L^2(0,T;W^{2,p}(\Omega)')$
(or, more precisely, in its weak formulation obtained by means of the function
$\vartheta$) and to get back \eqref{weak:sig}.

\smallskip

Next, we take the limit $\epsi\searrow 0$ in the Cahn-Hilliard system
\eqref{eq:1approx}. Actually, the procedure is mostly standard, with the 
only remarkable point being related with the nonlinear term depending
on $F'$. To deal with it, it is sufficient to observe that, thanks to
the pointwise convergence resulting from \eqref{co:11} and to the 
equiintegrability condition \eqref{st:17}, one has
\begin{equation}\label{co:21}
  \beta(\fhi\ee) \to \beta(\fhi) 
   \quad\text{strongly in }\,L^1(0,T;L^1(\Omega)),
\end{equation}
and an analogous property holds also for $F'(\fhi\ee)$, which differs from
$\beta(\fhi\ee)$ just by a linear function of $\fhi\ee$. 
Then, it is also worth observing that the conditions in \eqref{reg:Fprime} 
are a consequence of \eqref{st:15}, \eqref{st:17},~\eqref{co:11} and Fatou's lemma. 

Next, let us take $\psi\in V \cap L^\infty(\Omega)$: testing the second of
\eqref{eq:1approx} by $\psi$ and performing standard integrations 
by parts, we deduce the weak formulation
\begin{equation}\label{eq:1psi}
  \io \mu\ee \psi \,\de x
   = \io \nabla\fhi\ee\cdot \nabla \psi 
   + ( \beta(\fhi\ee) - \lambda \fhi\ee - \chi \sigma\ee) \psi \,\de x.
\end{equation}
Then, writing \eqref{eq:1psi} for $\epsi_1,\epsi_2 \in (0,1)$, 
taking the difference, and choosing $\psi = \fhi_{\epsi_1} - \fhi_{\epsi_2}$,
integrating in time, using \eqref{co:14} and the property
\begin{equation}\label{co:infty}
  \fhi\ee \to \fhi 
   \quad\text{weakly star in }\,L^\infty(0,T;L^\infty(\Omega)),
\end{equation}
which is a consequence of \eqref{trepunti},
it is not difficult to arrive at the Cauchy estimate leading to
\begin{equation}\label{co:22}
  \fhi\ee \to \fhi
   \quad\text{strongly in }\,L^2(0,T;V),
\end{equation}
which complements \eqref{co:11}. Hence, we can take the limit 
in \eqref{eq:1psi} and get the weak formulation
of \eqref{eq:1approx}. Moreover, the previous estimates and a
comparison of terms in the second \eqref{eq:1approx} 
(or, more precisely, the properties of the modular in the Orlicz
space $L\log L$) imply that
\begin{equation}\label{co:23}
  \int_0^T \int_\Omega | \Delta\fhi\ee | \ln (1 + | \Delta\fhi\ee | )
   \, \de x \, \de t \le c.
\end{equation}
Then, using Dunford-Pettis' theorem, we deduce \eqref{reg:Dphi} and the 
validity of \eqref{weak:mu} as a point-wise relation as well. 
In turn, the trace theorem~\cite[Prop.~3.80]{Demengel}
permits us to interpret also~\eqref{bou:fhi} 
as a point-wise a.e.-relation. 

Finally, we consider the entropy relation \eqref{giu:a5b}, which holds
as an equality at the level $\epsi>0$, but will turn to an inequality in the 
limit due to the quadratic term on the left-hand side. First of all,
we prove a reinforced estimate on $\ell = \ln \sigma$, which was not
observed in \cite{GiulioNew} and relies in an essential way 
on the second condition in \eqref{hp:sigma00}. 

To this aim, we set 
\begin{equation}\label{defi:gamma}
   \gamma(r):= - \ln ( 1 + r_- ),
\end{equation}
where $r_-$ denotes 
the negative part of $r$, and notice that the function
$\gamma(r)$ is identically $0$ for $r\ge 0$ and is strictly negative
for $r<0$; moreover, it is globally Lipschitz continuous and monotone 
in the whole of $\RR$. We also define 
\begin{equation}\label{gammaciapo}
  \gammaciapo(r):= - \int_r^0 \gamma(s)\, \de s
   = (1 + r_-) \ln (1 + r_-) - r_-
\end{equation}
and observe that, using $\gammaciapo$, condition \eqref{hp:sigma00} can
be equivalently rewritten as $\gammaciapo(\ln\sigma_0) \in L^1(\Omega)$. Then, 
we test \eqref{giu:a5b} by $\gamma(\ell\ee)=\gamma(\ln\sigma\ee)$: we just remark
that this formal procedure could by made rigorous by truncating $\gamma$ and/or
working at the level $\delta>0$ (i.e., considering \eqref{giu:a5}); we omit
details for brevity. That said, the resulting relation
(where we have neglected a nonnegative term from the left-hand side)
reads
\begin{align}\nonumber
  & \ddt \io \gammaciapo (\ell\ee) \, \de x
   + \io | \gamma(\ell\ee) | | \nabla \ell\ee |^2 \, \de x
  \le \chi \io \frac{1}{1+\epsi\sigma\ee} 
      \gamma'(\ell\ee)\nabla\fhi\ee \cdot \nabla\ell\ee\, \de x\\
 \label{co:24}
  & \qquad\quad
     - \chi \io \frac{1}{1+\epsi\sigma\ee} 
       \gamma(\ell\ee) \nabla\fhi\ee \cdot \nabla\ell\ee \, \de x
     + \io \alpha(\fhi\ee,\sigma\ee) \gamma(\ell\ee) \, \de x
   =: J_1 - J_2 + J_3.
\end{align}
Now, by the boundedness of $\alpha$, the global Lipschitz continuity of $\gamma$, 
and the property $\gamma(0)=0$, we have
\begin{equation}\label{co:24b}
  J_1 + J_3 \le c \| \nabla \fhi\ee \| \| \nabla \ell\ee \|
   + c \| \ell\ee \|_{L^1(\Omega)}.
\end{equation}
The control of $J_2$ is a bit more delicate. 
First of all, by Young's inequality, we have
\begin{equation}\label{co:25}
  |J_2| \le c \io | \gamma(\ell\ee) | | \nabla \fhi\ee | | \nabla \ell\ee |\, \de x
   \le \frac12 \io | \gamma(\ell\ee) | | \nabla \ell\ee |^2 \, \de x
   - c \io \gamma(\ell\ee) | \nabla \fhi\ee |^2 \, \de x
\end{equation}
(recall that $\gamma$ is nonpositive), and the last term can be
integrated by parts and managed via the Fenchel-Young inequality
(where $\psi$ and $\psi^*$ are as in \eqref{giu:a4})
as follows:
\begin{align}\nonumber
  & - c \io \gamma(\ell\ee) | \nabla \fhi\ee |^2 \, \de x
  = c \io \gamma'(\ell\ee) \fhi\ee \nabla\ell\ee \cdot \nabla\fhi\ee\, \de x
   + c \io \gamma(\ell\ee) \fhi\ee \Delta\fhi\ee\, \de x\\
 \nonumber
  & \qquad\quad \le c \| \nabla\ell\ee \| \| \nabla\fhi\ee \|
   + c \io \psi(|\gamma(\ell\ee)|)\, \de x + c \io \psi^*(|\Delta\fhi\ee|) \, \de x\\
 \label{co:26}  
  & \qquad\quad \le c \| \nabla\ell\ee \| \| \nabla\fhi\ee \|
    + c + c \| (\ell\ee)_- \|_{L^1(\Omega)}
    + c \int_\Omega | \Delta\fhi\ee | \ln (1 + | \Delta\fhi\ee | ) \, \de x.
\end{align}
In the above we have also used the uniform boundedness of $\fhi\ee$ 
and of $\gamma'$ (or, more precisely, the global Lipschitz
continuity of $\gamma$). Then, using \eqref{co:24b}-\eqref{co:26},
\eqref{co:24} gives
\begin{align}\nonumber
  & \ddt \io \gammaciapo (\ell\ee) \, \de x
   + \frac12 \io | \gamma(\ell\ee) | | \nabla \ell\ee |^2 \, \de x\\
 \label{co:24y}   
  & \qquad\quad \le c \Big( 1 + \| \ell\ee \|_{L^1(\Omega)}
   + \| \nabla \fhi\ee \| \| \nabla \ell\ee \| 
   + \int_\Omega | \Delta\fhi\ee | \ln (1 + | \Delta\fhi\ee | ) \, \de x \Big).
\end{align}
Moreover, by means of \eqref{co:11}, \eqref{co:13}, and \eqref{co:23},
integrating in time, and using \eqref{sigmaepsi01}   together
with assumption \eqref{hp:sigma00},  we deduce
\begin{equation}\label{st:31}
  \| \gammaciapo(\ell\ee) \|_{L^\infty(0,T;L^1(\Omega))} \le c.
\end{equation}
Now, by the trivial inequality 
\begin{equation}\label{st:31b}
  |r| \ln ( 1 + |r|) \le r_+^2 + r_- \ln ( 1 + r_- ), 
\end{equation}
the above implies in particular
\begin{equation}\label{st:31c}
  \big\| |\ell\ee| \ln (1 + |\ell\ee|) \big\|_{L^\infty(0,T;L^1(\Omega))} \le c,
\end{equation}
whence, using the pointwise convergence, we deduce 
\begin{align}\label{co:14d}
  & \ln \sigma\ee \to \ln\sigma 
   \quad\text{weakly star in }\, L^\infty(0,T;L^1(\Omega))
  ~~\text{and strongly in }\,L^P(0,T;L^1(\Omega)),
\end{align}
for every $P\in [1,\infty)$, which completes condition \eqref{red:ln}.

\bigskip


\subsection{Proof of Theorem~\ref{thm:global}: entropy and energy inequalities}
\label{subsec:enen}

To conclude the proof of Theorem~\ref{thm:global}, we prove the entropy 
and energy inequalities complementing the concept of weak solution.
To this aim, we first observe that, from \eqref{st:11new}, \eqref{co:13} and elementary 
considerations, there follows that 
\begin{align}\label{co:14e}
  & \frac1{1+\epsi\sigma\ee} \nabla \ln \sigma\ee \to \nabla \ln\sigma 
    \quad\text{weakly in }\,L^2(0,T;H).
\end{align}
Hence, recalling \eqref{co:22}, one has, at least,
\begin{equation}\label{co:14f}
  \frac{1}{1+\epsi\sigma\ee}
   \nabla \ln\sigma\ee \cdot \nabla (1 - \fhi\ee)
    \to \nabla \ln\sigma \cdot \nabla (1 - \fhi)
    \quad\text{weakly in }\,L^1(0,T;L^1(\Omega)).
\end{equation}
Let us now go back to the regularized version \eqref{giu:a5b} of the entropy inequality.
Then, in order to convert it into an integral relation, we multiply it by (minus) a test 
function $\theta\in C^1([0,T];W^{1,p}(\Omega))$, $p>d$. 
Integrating by parts some terms and performing standard manipulations, setting also 
$\ell\ee = \ln \sigma\ee$ for simplicity, we deduce
\begin{align}\nonumber
  & \int_0^T \io \ell\ee \partial_t \theta \, \de x \, \de t
   - \io \ell\ee(T) \theta(T) \, \de x
   - \int_0^T \io \nabla \ell\ee \cdot \nabla \theta \, \de x \, \de t
   + \int_0^T \io | \nabla \ell\ee |^2 \theta_+ \, \de x \, \de t\\
 \nonumber  
  & \mbox{} ~~~~~
    - \chi \int_0^T \io \frac{1}{1+\epsi \sigma\ee} 
       \big( \nabla (1 - \fhi\ee) \cdot \nabla \theta
       - \nabla \ell\ee \cdot \nabla (1 - \fhi\ee) \theta \big) \, \de x \, \de t\\
 \label{entr:11}
  & \mbox{} ~~~~~
   = - \int_0^T \io \alpha(\fhi\ee,\sigma\ee) \theta \, \de x \, \de t
    - \io \ell\ee(0) \theta(0) \, \de x
    + \int_0^T \io | \nabla \ell\ee |^2 \theta_- \, \de x \, \de t.
\end{align}
In principle, we would like to take the $\limsup$, as $\epsi\searrow0$, 
in the above relation. However, in order to manage the last term, we first 
need to add and simultaneously subtract from the right-hand side the quantity
\begin{equation}\label{sem:10} 
  \| \theta_- \|_{L^\infty(\Omega\times (0,T))} \int_0^T \io | \nabla \ell\ee|^2 \, \de x \, \de t.
\end{equation}
The presence of such a correction term is what actually permits us to take the 
$\limsup$. In particular, we may observe that, thanks to \eqref{co:13}, \eqref{co:14d},
\eqref{co:14e}, \eqref{co:22} and \eqref{co:alpha}, for all the terms on the left-hand side
of \eqref{entr:11}, there exists the $\lim_{\epsi\searrow 0}$ (and not just 
the $\limsup$), and it takes the expected value. The only exception is represented 
by the fourth term, for which, however, we may observe that 
\begin{equation}\label{sem:11} 
  \int_0^T \io | \nabla \ell |^2 \theta_+ \, \de x \, \de t
   \le \liminf_{\epsi\searrow0} \int_0^T \io | \nabla \ell\ee |^2 \theta_+ \, \de x \, \de t
   \le \limsup_{\epsi\searrow0} \int_0^T \io | \nabla \ell\ee |^2 \theta_+ \, \de x \, \de t,
\end{equation}
by weakly lower semicontinuity. Some more words need to be spent on 
the relation
\begin{equation}\label{sem:12}
  \lim_{\epsi\searrow0}  - \io \ell\ee(T) \theta(T) \, \de x
   = - \langle \ell(T), \theta(T) \rangle.
\end{equation}
To obtain it, we first observe that, as $W^{1,p}(\Omega)$ for $p\in(d,\infty)$
is reflexive and separable, with separable dual,
by a suitable form of Helly's selection principle
(see, e.g., \cite[Thm.~3.1]{MaMi2009}),
we may complement \eqref{st:31b} with
\begin{equation}\label{hel}
  \ell\ee(t) \to \ell(t) \quad\text{weakly star in }\,W^{1,p}(\Omega)'
   ~~\text{for {\bf every} }\, t\in[0,T],
\end{equation}
which in particular entails \eqref{sem:12}. 

\smallskip

Moving to the terms on the right-hand side of \eqref{entr:11}, 
it is apparent that the first two integrals converge to the expected limits
(in particular, for the integral depending on the initial datum, the dominated convergence
theorem is used). Hence, we only need to take care of the contributions quadratically 
depending on $\nabla\ell\ee$. Namely, recalling that we added and subtracted the 
term in \eqref{sem:10}, we need to manage the quantities
\begin{equation*}
  \int_0^T \io | \nabla \ell\ee |^2 \big( \theta_- - \| \theta_- \|_{L^\infty(\Omega\times (0,T))} \big) \, \de x \, \de t
   + \| \theta_- \|_{L^\infty(\Omega\times (0,T))} \int_0^T \io | \nabla \ell\ee|^2 \, \de x \, \de t.
\end{equation*}
As for the first integral, we clearly have 
\begin{align}\nonumber
  & \limsup_{\epsi\searrow 0} \int_0^T \io | \nabla \ell\ee |^2 
      \big( \theta_- - \| \theta_- \|_{L^\infty(\Omega\times (0,T))} \big) \, \de x \, \de t\\
 \nonumber
  & \mbox{}~~~~~
    = - \liminf_{\epsi\searrow 0} \int_0^T \io | \nabla \ell\ee |^2 
       \big( \| \theta_- \|_{L^\infty(\Omega\times (0,T))} - \theta_- \big) \, \de x \, \de t\\
 \label{sem:13}
  & \mbox{}~~~~~\le \int_0^T \io | \nabla \ell |^2 
     \big( \theta_- - \| \theta_- \|_{L^\infty(\Omega\times (0,T))} \big) \, \de x \, \de t.
\end{align}
Concerning the latter quantity in \eqref{sem:13}, we simply notice that, by
estimate \eqref{st:18} (or, more precisely, its analogue for $\delta=0$), 
we have
\begin{equation}\label{sem:14}
 \| \theta_- \|_{L^\infty(\Omega\times (0,T))} \int_0^T \io | \nabla \ell\ee|^2 \, \de x \, \de t
  \le \| \theta_- \|_{L^\infty(\Omega\times (0,T))} \zeta_\varepsilon,
\end{equation}
with $\zeta_\varepsilon :=\int_0^T \io | \nabla \ell\ee|^2 \, \de x \, \de t$ 
being a bounded sequence of real numbers such that 
$\zeta_\varepsilon \to \zeta \geq 0 $ for a subsequence. In particular, the limit 
$\zeta$  is bounded by a computable quantity depending only on the initial datum 
and on the fixed physical parameters. 

Collecting all the above considerations, we then end up with
\begin{align}\nonumber
  & \int_0^T \io \ell \partial_t \theta \, \de x \, \de t
   - \langle \ell(T) , \theta(T) \rangle
  + \int_0^T \io \theta | \nabla \ell |^2 \, \de x \, \de t
   - \int_0^T \io \nabla \ell \cdot \nabla \theta \, \de x \, \de t\\
 \nonumber  
  & \mbox{} ~~~~~
    - \chi \int_0^T \io \big( \nabla (1 - \fhi) \cdot \nabla \theta
       - \nabla \ell \cdot \nabla (1 - \fhi) \theta \big) \, \de x \, \de t\\
 \label{entr:13}
  & \mbox{}
   \le - \int_0^T \io \alpha(\fhi,\sigma) \theta \, \de x \, \de t
    - \io \ell_0 \theta(0) \, \de x
    + \| \theta_- \|_{L^\infty(\Omega\times (0,T))}
       \Big( \zeta - \int_0^T \io | \nabla \ell |^2  \, \de x \, \de s \Big).
\end{align}
which, after rearranging, reduces exactly to \eqref{weakentropy}. 
We finally point out that, as observed in Remark~\ref{rem:weakstrong},
using in particular \eqref{hel}, one can adapt the above argument 
in order to obtain the analogue of \eqref{weakentropy} on any subinterval $(0,t)$.

\bigskip

Next, we move to the energy inequality, which, rewritten in the integral form, 
will also account for correction terms in the limit $\epsi\searrow 0$, as 
expressed in the statement of Theorem~\ref{thm:global}. To obtain it, 
it is convenient to start by rewriting relation \eqref{enregest}
in the more ``compact'' form
\begin{equation}\label{enre1}
  \ddt \calF\ee + \calD\ee \le 
   \int_\Omega \alpha(\rp,\rs)\rs \big( \ln \rs + \varepsilon (\rs-1)
       +\chi (1-\rp)\big)\,\de x =: \calR\ee.
\end{equation}
Here, we have set $\calF\ee : = \calE\ee + K$ (recall that the approximate energy $\calE\ee$
was defined in \eqref{defi:Eee}), where the constant $K>0$ is chosen such that 
$\calF\ee \ge 0$ (note also that $K$ may be taken independently of $\epsi$); 
moreover, the ``dissipation term'' is defined as
\begin{equation}\label{enre2}
  \calD\ee 
   = \io\frac{\rs}{1+\varepsilon\rs} \big| \nabla ( \ln \rs + \varepsilon(\rs -1)+ \chi (1-\rp)) \big|^2
     + | \nabla \rmu|^2 \,\de x.
\end{equation}
Then, recalling \eqref{rhs:11}, we see that 
\begin{equation}\label{enre2.5}  
  |\calR\ee| \le c (1 + \calF\ee),
\end{equation}
whence, applying Gr\"onwall's lemma, we may deduce
\begin{equation}\label{enre3}
  \calF\ee(t) \le M_0(T) \quad\text{for every }\,t\in[0,T],
\end{equation}
where the computable constant $M_0>0$ depends on the initial energy and (exponentially)
on $T$, but it is independent of $\epsi$.

As before, we will consider the {\sl integral version}\/ of inequality \eqref{enre1}
and will get a a correction term in the limit. This is due to the fact that, on the one hand, for the
term $\sigma\ee \ln \sigma\ee$ appearing in $\calR\ee$ no strong $L^p$- convergence is available,
and, on the other hand, the function $\alpha$ has no sign properties; hence, the corresponding
contribution cannot be managed by means of semicontinuity methods, at least in a direct way.

To detail our argument, we first refine the control of $\calR\ee$: noting 
as $r\ee$ the integrand in $\calR\ee$, namely $\calR\ee = \io r\ee \, \de x$, we can
decompose it as $r\ee = r_{\epsi,1} + r_{\epsi,2}$, where
\begin{align}\label{ree12}  
  r_{\epsi,1} &: = \alpha(\fhi\ee,\sigma\ee) \big( \sigma\ee (\ln\sigma\ee - 1) + \epsi (\sigma\ee - 1)^2 + 1 \big),\\
  \label{ree22}  
  r_{\epsi,2} &: = \alpha(\fhi\ee,\sigma\ee) \big( \sigma\ee + \epsi (\sigma\ee - 1) 
    + \chi \sigma\ee ( 1 - \fhi\ee) - 1 \big).
\end{align}
Integrating \eqref{enre1} over a generic time interval $(s,t)$ with $0\le s < t \le T$,
using the above decomposition, we then have
\begin{equation}\label{enre4}
  \calF\ee(t) \le \calF\ee(s) 
   - \int_s^t \calD\ee(\tau) \, \de \tau
    + \int_s^t \int_\Omega r_{\epsi,1}(x,\tau) \, \de x \, \de \tau
     + \int_s^t \int_\Omega r_{\epsi,2}(x,\tau) \, \de x \, \de \tau.
\end{equation}
Now, we would like to take the $\liminf$, as $\epsi\searrow 0$, of the above relation. 
Then, setting 
\begin{equation}\label{defiD}
  \calD := \io \sigma \big| \nabla ( \ln \sigma  + \chi (1-\fhi)) \big|^2
     + | \nabla \mu|^2 \,\de x,
\end{equation}
we claim that the following relations hold:
\begin{align}\label{semi11}  
  \calF(t) := \calE(t) + K & \le \liminf_{\epsi\searrow 0} \calF\ee(t),\\
 \label{semi12}
  \limsup_{\epsi\searrow 0} \Big( - \int_s^t \calD\ee(\tau) \, \de \tau \Big)
   & = - \liminf_{\epsi\searrow 0} \int_s^t \calD\ee(\tau) \, \de \tau \
   \le - \int_s^t \calD(\tau) \, \de \tau,
\end{align}
Indeed, using \eqref{co:14}, \eqref{co:infty} and \eqref{co:22}, \eqref{semi11} follows 
directly from Fatou's lemma, whereas \eqref{semi12} is a consequence of \eqref{co:12}, \eqref{co:15}
(where the limit quantity $\eta$ is identified thanks to~\eqref{id:eta}),
and lower semicontinuity of norms with respect to weak convergence.

It remains to manage the last two integral terms in \eqref{enre4}, which are 
more delicate to deal with. First of all, we notice that, from 
\eqref{enre1}, \eqref{enre2.5}-\eqref{enre3} and the fact $\calD\ee \ge 0$, 
there follows
\begin{equation}\label{RBV}
  \calF\ee(t+h) - \calF\ee(t) \le \int_t^{t+h} | \calR\ee(\tau) |\,\de \tau
   \le {\tilde c} h, 
\end{equation}
for every $t\in[0,T)$ and every (sufficiently small) $h>0$, where ${\tilde c}>0$ is, 
as usual, a computable constant depending only on the problem data.

From the above, we readily obtain $\calF\ee\in \Bv([0,T])$: to see
this it is sufficient to observe that \eqref{RBV} implies that the function
$g\ee := \calF\ee - {\tilde c} \Id$ is non-increasing. More precisely, 
noting that, for any finite partition 
$0=t_0 < t_1 < \dots < t_n =T$ of $[0,T]$, there holds
\begin{equation}\label{bound:bv}
  \sum_{k=1}^n | \calF\ee(t_k) - \calF\ee(t_{k-1}) | 
   \le | \calF\ee(T) - \calF\ee(0) | + 2 {\tilde c} T,
\end{equation}
with ${\tilde c}$ as in \eqref{RBV}, recalling also that $\calF\ee$ is 
bounded in $L^\infty(0,T)$ independently of $\epsi$ (cf.\ \eqref{enre3}),
we immediately deduce that the total variation of $\calF\ee$ is uniformly bounded 
with respect to $\epsi$. 

As a consequence, applying once more Helly's selection principle, we may assume that, 
for {\sl every}\ $s\in[0,T]$, there exists the $\lim_{\epsi\searrow 0} \calF\ee(s)$.
Next, using \eqref{co:alpha} together with \eqref{co:14} and \eqref{co:infty}, we
readily obtain
\begin{equation}\label{co:re2}
  r_{\epsi,2} \to \alpha(\fhi,\sigma) \big( \sigma + \chi\sigma( 1 - \fhi) - 1\big)=: r_2
   \quad\text{say, weakly in }\,L^1(Q).
\end{equation}
In order to deal with the term $r_{\epsi,1}$, we first observe that, if
${\ov \alpha}\le 0$ (and consequently $\alpha$ is nonpositive), then
we may directly conclude that
\begin{equation}\label{alphaneg}
   \limsup_{\epsi\searrow 0} \int_s^t\io r_{\epsi,1} \,\de x \, \de \tau
    \le \int_s^t\io \alpha(\fhi,\sigma) \big( \sigma(\ln \sigma - 1) + 1 \big) \,\de x \, \de \tau,
\end{equation}
by means of Fatou's lemma. Hence, in this case, we obtain the 
energy inequality with no need for correction terms.

  Conversely, let us 
assume that ${\ov \alpha}>0$. In this case, we may decompose 
$r_{\epsi,1}$ as follows:
\begin{align}\nonumber
  r_{\epsi,1} & = (\alpha(\fhi\ee,\sigma\ee) - {\ov\alpha} ) \big( \sigma\ee (\ln\sigma\ee - 1)
    + \epsi (\sigma\ee - 1)^2 + 1 \big)
    + {\ov \alpha} \big( \sigma\ee (\ln\sigma\ee - 1) + \epsi (\sigma\ee - 1)^2 + 1 \big)\\
 \label{dec:re1} 
  & =: r_{\epsi,11} + r_{\epsi,12}.
\end{align}
Noting that $r_{\epsi,11} \le 0$ almost everywhere and applying Fatou's lemma, we then deduce
\begin{align}\nonumber
  \limsup_{\epsi\searrow 0} \int_s^t\io r_{\epsi,11} \,\de x \, \de \tau
   & = - \liminf_{\epsi\searrow 0} \int_s^t\io (- r_{\epsi,11} ) \,\de x \, \de \tau\\
  \label{enre6} 
   & \le \int_s^t\io (\alpha(\fhi,\sigma) - {\ov\alpha} ) \big( \sigma (\ln\sigma - 1) + 1 \big) \,\de x \, \de \tau.
\end{align}
Now, defining $Z_\varepsilon:= \int_0^T \io r_{\epsi,12} \,\de x \, \de t$, 
we deduce that $ \{ Z_\varepsilon\}$ is a bounded sequence of real-valued numbers such that $ Z_\varepsilon \to Z\geq 0$ at least for a subsequence.
From the a-priori estimates \eqref{st:12} and \eqref{st:11new} it can be observed 
that these numbers are bounded by the assigned problem parameters and initial values. We then conclude that 
\begin{equation}\label{Zeta}
  \int_s^t \io r_{\epsi,12} \,\de x \, \de \tau 
   = {\ov \alpha} \int_s^t \io \big( \sigma\ee (\ln\sigma\ee - 1) 
   + \epsi (\sigma\ee - 1)^2 + 1 \big) \,\de x \, \de \tau
   \le {\ov \alpha} Z_\varepsilon, 
\end{equation}
for every $0 \le s < t \le T$. Of course, by Fatou's lemma, the above also implies
\begin{equation}\label{Zeta2}
  {\ov \alpha} \int_s^t \io \big( \sigma (\ln\sigma - 1) + 1 \big) \,\de x \, \de \tau
    \le {\ov \alpha} Z.
\end{equation}
Hence, collecting the above considerations and taking the $\liminf$ of \eqref{enre4},
we deduce
\begin{align}\nonumber
  & \calF(t) \le \lim_{\epsi\searrow 0} \calF\ee(s)
   - \int_s^t \calD(\tau) \, \de \tau  
   + \int_s^t \io \alpha(\fhi,\sigma) \big( \sigma \ln\sigma + \chi\sigma( 1 - \fhi) \big) \, \de x \, \de \tau\\
 \label{enre7}
  & \mbox{}~~~~~ + {\ov \alpha} \Big( Z - \int_s^t \io \big( \sigma (\ln\sigma - 1) + 1 \big) \,\de x \, \de \tau \Big),
\end{align}
where we point out that the last integral is nonnegative in view of \eqref{Zeta2}. 
 In general, we are not able to prove that, for every (or even for a.e.) $s\in (0,T)$,
$\lim_{\epsi\searrow 0} \calF\ee(s) = \calF(s)$. However, on account of 
\eqref{sigmaepsi0}-\eqref{sigmaepsi1}, using also the dominated convergence theorem,
one can easily prove that this holds at least for $s=0$. 
Hence, replacing $\calF$ with $\calE$ and rearranging terms, it is immediate
to get \eqref{enin}, which concludes the proof of the theorem.
\begin{remark}\label{rem:EvscalE}
  We point out that, if we denote as $E(t)$ the limit as $\epsi\searrow 0$ 
  of $\calF\ee(t)$, which exists for every $t\in [0,T]$ thanks to Helly's theorem,
  then the ``corrected'' energy functional $E(t)$ satisfies (by semicontinuity)
  $\calF(t) \le E(t)$ for every $t\in[0,T]$; moreover, $E$ satisfies the following
  alternative version of the energy inequality \eqref{enre7}:
 \begin{align}\nonumber
   & E(t) + \int_s^t \calD(\tau) \, \de \tau  
    \le E(s)
    + \int_s^t \io \alpha(\fhi,\sigma) \big( \sigma \ln\sigma + \chi\sigma( 1 - \fhi) \big) \, \de x \, \de \tau\\
  \label{enre7E}
   & \mbox{}~~~~~ + {\ov \alpha} \Big(  \int_s^t E (\tau) - \mathcal{F}(\tau) \, \de \tau \Big),
   \quad\text{for {\bf every} }\,0\le s < t \le T.
 \end{align}
\end{remark}


\subsection{Weak sequential stability of the solution set}
\label{subsec:stab}

As observed in Remark~\ref{rem:login}, we designed the solution concept 
(including, in particular, the entropy \eqref{weakentropy} and energy \eqref{enin} 
inequalities in the formulation), so to guarantee the weak sequential stability of 
the solution set. This is an important property, 
for instance for selection principles or further coupling of the model and, roughly speaking,
corresponds to the fact that any cluster point of any ``bounded''
family of weak solutions (in the sense of Def.~\ref{def:weak},
hence including the entropy and energy relations) is a weak solution again. 
This fact is made precise by the following result: 
\smallskip
\begin{proposition}\label{prop:stab}
  Let $\{(\varphi_n, \mu_n, \sigma_n)\}$ be a sequence of solutions 
  in the sense of Definition~\ref{def:weak}, all
  emanating from the same initial datum $ (\varphi_0,\sigma_0)$ under\/
  {\rm Hypothesis~\ref{hypo:data}}. Let $\{(\varphi_n, \mu_n, \sigma_n)\}$ satisfy
  the assumptions of Theorem~\ref{thm:global}, including 
  the entropy and energy inequalities\/ \eqref{weakentropy} and\/ \eqref{enin},
  with the (bounded) functions $\{\zeta_n\}$ and $\{Z_n\}$ therein being bounded by the 
  assigned problem parameters, hence independently of $n$. Moreover, 
  let us assume that, for some $c>0$ independent of $n$, there holds 
  the additional estimate
  \begin{equation}\label{add:ln}
     \big\| | \ln \sigma_n | \ln ( 1 + | \ln \sigma_n | ) \big\|_{L^\infty(0,T;L^1(\Omega))} \leq c.      
  \end{equation}
  Then, every cluster point of $\{(\varphi_n, \mu_n, \sigma_n)\}$ 
  is still a weak solution in the sense of Definition~\ref{def:weak}
  fulfilling in particular the regularity conditions
  \eqref{reg:mu}-\eqref{reg:nonl}.
\end{proposition}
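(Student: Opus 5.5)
The plan is to repeat the compactness argument of Subsections~\ref{subsec:lim}--\ref{subsec:enen}. This time the a priori bounds come from the inequalities \eqref{weakentropy}, \eqref{enin} satisfied by each $(\varphi_n,\mu_n,\sigma_n)$, rather than from the regularized system.

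\textbf{Step 1: uniform bounds.} From \eqref{enin}, with $Z_n$ uniformly bounded, the boundedness of $\alpha$, the coercivity \eqref{coerc:Eee} (with $\epsi=0$) and Gr\"onwall's lemma, I get the analogues of \eqref{st:11}--\eqref{st:14}. The argument is the one leading to \eqref{enre3}, since the integrand $\alpha\,\sigma\ln\sigma$ is controlled by $c(1+\calE)$. Testing \eqref{weak:mu} by $\fhi_n-m_0$ and by $\rho(\fhi_n)$, as in \eqref{MZ:01}--\eqref{giu:a1b}, gives \eqref{st:15}--\eqref{st:17} and then \eqref{co:23}. The use of \eqref{giu:a1} needs care because $\Delta\fhi_n$ is only in $L^1$; I would justify it via the Yosida approximation together with \eqref{bou:fhi}.

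For $\ell_n=\ln\sigma_n$, the bound $\int_0^T\!\io|\nabla\ell_n|^2\le\zeta_n\le c$ follows from \eqref{weakentropy} with $\theta\equiv-1$. Hypothesis \eqref{add:ln} supplies the $L^\infty(0,T;L^1)$ equi-integrability. Through Lemma~\ref{lem:meas}, the pointwise form \eqref{entropw} gives $\t\ell_n$ bounded in $L^1(0,T;W^{1,p}(\Omega)')$ up to the measure $\xi_n$. Choosing $\theta=\pm1$ in \eqref{entromeas}/\eqref{measineq} shows that $\xi_n(\ov\Omega\times[0,T])$ is bounded. Hence $\ell_n$ is bounded in $\Bv(0,T;W^{1,p}(\Omega)')$.

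\textbf{Step 2: compactness and identification.} I apply Aubin--Lions, and its $\Bv$ version as in \eqref{co:13}. This gives $\fhi_n\to\fhi$ strongly in $L^\infty(0,T;H)$ and $\ell_n\to\ell$ strongly in $L^2(0,T;H)$, hence a.e. Together with \eqref{st:12} this yields $\sigma_n\to\sigma=e^\ell$ strongly in $L^P(0,T;L^1)$, and with \eqref{add:ln} it yields $\ell_n\to\ell$ strongly in $L^P(0,T;L^1)$. The bound on $\sqrt{\sigma_n}\nabla(\ell_n+\chi(1-\fhi_n))$ in $L^2(0,T;H)$ allows the flux to be identified exactly as in \eqref{co:15}--\eqref{co:17}. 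This step is simpler than before because there is no $\epsi$-term. The Cauchy argument \eqref{eq:1psi}--\eqref{co:22} gives $\fhi_n\to\fhi$ in $L^2(0,T;V)$. The convergence of $\beta(\fhi_n)$ in $L^1$ follows from \eqref{st:17}, and \eqref{reg:Dphi}, \eqref{weak:mu}, \eqref{bou:fhi} follow by Dunford--Pettis. Passing to the limit in \eqref{weak:phi}, \eqref{weak:sig} is then routine.

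\textbf{Step 3: the inequalities, which I expect to be the main obstacle.} For \eqref{weakentropy}, the only nonlinear problematic term is $\int\theta|\nabla\ell_n|^2$. The plan is to write it as
\[
\int\theta_+|\nabla\ell_n|^2-\int\theta_-|\nabla\ell_n|^2+\|\theta_-\|\int|\nabla\ell_n|^2-\|\theta_-\|\zeta_n ,
\]
and to group the middle two terms as $\int(\|\theta_-\|-\theta_-)|\nabla\ell_n|^2\ge0$. Weak lower semicontinuity applies to the first term and to this group, exactly as in \eqref{sem:11}, \eqref{sem:13}. I then take $\zeta=\lim\zeta_n$ along a subsequence. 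The boundary terms $\langle\ell_n(T),\theta(T)\rangle$ are handled by Helly's principle, as in \eqref{hel}. At $t=0$ all the $\ell_n(0)$ equal $\ln\sigma_0$, since the initial datum is the same, so no difficulty arises there.

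For \eqref{enin}, the left side is handled by Fatou's lemma and lower semicontinuity, as in \eqref{semi11}--\eqref{semi12}, and $\calE(0)$ is common to all $n$. The term $\alpha\,\sigma_n\ln\sigma_n$ is split as in \eqref{dec:re1} into a part with the sign of $\alpha-\ov\alpha\le0$, treated by Fatou's lemma, and the part $\ov\alpha\int\big(\sigma_n(\ln\sigma_n-1)+1\big)$. This last part cancels against the $-\ov\alpha\int(\cdots)$ inside the bracket of \eqref{enin} for $\sigma_n$. The remaining constant is $\ov\alpha Z_n$, and $Z_n\to Z$ along a subsequence.
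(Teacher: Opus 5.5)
Your proposal is correct and follows essentially the same route as the paper. You recover the uniform bounds from \eqref{enin} and \eqref{weakentropy}, obtain the $\Bv$ bound on $\ln\sigma_n$ through the measures $\xi_n$ of Lemma~\ref{lem:meas}, and then run the compactness and semicontinuity arguments of Subsections~\ref{subsec:lim}--\ref{subsec:enen} with $\zeta_n\to\zeta$ and $Z_n\to Z$. You are in fact more explicit than the paper about bounding the total mass of $\xi_n$ and about the cancellation of the $\ov\alpha$-terms in \eqref{enin}.

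There is one slip to fix in Step~1. With $\theta\equiv-1$ alone, the terms $\int_0^T\!\io\theta|\nabla\ell_n|^2$ and $\|\theta_-\|\int_0^T\!\io|\nabla\ell_n|^2$ cancel, so that choice yields no bound on the gradient. You also need the inequality obtained with $\theta\equiv1$. Since the left-hand side of \eqref{weakentropy} is linear in $\theta$ apart from the $\|\theta_-\|$-term, adding the two inequalities gives exactly $\int_0^T\!\io|\nabla\ell_n|^2\le\zeta_n$. This is also the hypothesis you need in order to apply Lemma~\ref{lem:meas}.
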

\noindent%

\begin{remark}\label{rem:clust}
 It is worth clarifying a bit the meaning of the statement, which is somehow
 delicate as uniqueness is lacking and many weak solutions in the regularity
 class specified by Def.~\ref{def:weak} may emanate from a single initial
 datum. What we can show is that, if a family of such solutions also satisfies the entropy and energy 
 relations uniformly with respect to $n$ and with the functions $\zeta_n$ and $Z_n$
being bounded by the assigned system parameters,
  then such a family is weakly sequentially 
 stable, and in particular it also fulfills, uniformly in $n$, all the a-priori
 estimates corresponding to the regularity properties \eqref{reg:mu}-\eqref{reg:nonl}.
 This does not exclude that from the same initial datum there may also emanate
 other ``bad'' or ``unphysical'' weak solutions that do not satisfy the 
 entropy and energy inequalities in the quantitative way prescribed 
 by the statement.
 Moreover, the additional assumed estimate~\eqref{add:ln} is not required for 
 inferring the formulations of Definition~\ref{def:weak} but merely the
 first regularity statement of~\eqref{red:ln}, \textit{i.e.,} 
 $ \ln \sigma \in L^\infty(0,T;L^1(\Omega))$. 
\end{remark}
\begin{proof}
Let us assume that a triple $(\fhi,\mu,\sigma)$ arises as a cluster point, i.e.,
is a limit, in a suitable sense, of a (nonrelabelled) subsequence of 
$\{(\fhi_n,\mu_n,\sigma_n)\}$. We shall then prove that this ``cluster''
triplet still verifies all the conditions in Def.~\ref{def:weak}.
To this aim, we first observe that the mass balance properties at the 
level $n$ can be deduced by choosing $\psi=1$ in \eqref{weak:phi}
and, respectively, $\vartheta \equiv 1 $ in~\eqref{weak:sig}; it is 
also apparent that these conditions are maintained as we let 
(the subsequence of) $n$ go to $\infty$.

Next, as the entropy and energy inequalities \eqref{weakentropy} and \eqref{enin}
hold uniformly with respect to~$n$, we can recover basically
all the a priori estimates corresponding to the regularity properties 
\eqref{reg:mu}-\eqref{reg:nonl},  with the notable exception
of the last of \eqref{red:ln}, which will be discussed in a while.

Actually, one first uses the energy estimate \eqref{enin}, which, adapting
some regularity arguments from the previous section, provides most of the 
information. Then, choosing $\theta=1$ in \eqref{weakentropy} one deduces 
a priori estimates corresponding to the first two controls in \eqref{red:ln}.
To be precise, in order to get that 
\begin{equation}\label{ln:11}
  \| \ln \sigma_n \|_{L^\infty(0,T;L^1(\Omega))} \le c,  
\end{equation}
the entropy inequality should be stated on a generic subinterval $(0,t)$ 
(which is possible, as is observed in Remark~\ref{rem:weakstrong}).
Note, however, that \eqref{ln:11} alone does not exclude that 
the limit $\ln \sigma$ could exhibit concentration phenomena 
(i.e., be a non-absolutely continuous measure) with respect to space 
variables. 

As mentioned, obtaining the last estimate in \eqref{red:ln} is more
delicate as the BV-regularity of $\ln \sigma_n$ does not allow, at 
least directly, to integrate by parts back the corresponding terms in
\eqref{weakentropy} so to get an estimate of the time derivative.
To overcome this issue, we take advantage of the results proven in
Section~\ref{sec:pre}. In particular, using Lemma~\ref{lem:meas}, we infer
a measure-valued formulation which is equivalent to~\eqref{weakentropy}. Indeed, 
from the lemma, we deduce the existence of a family of  
measures~$\{\xi_n\}_{n\in \N}\subset \mathcal{M}^+(\ov\Omega \times [0,T])$ 
such that
 \begin{multline}\label{meastime}
    -\int_\Omega \ln \sigma_n \theta \, \de x \Big |_0^T 
    + \int_0^T \int_\Omega \theta | \nabla \ln \sigma_n|^2 
      - \nabla \ln \sigma_n \cdot \nabla \theta - \chi \theta \nabla\ln \sigma _n\cdot \nabla \varphi_n
        + \chi \nabla \varphi_n \cdot \nabla \theta \, \de x \, \de t\\
    + \int_0^T\int_{\ov\Omega} \theta \, \de \xi_n  (t,x)   
    =  \int_0^T\int_\Omega \alpha(\varphi_n,\sigma_n) \theta -\ln \sigma_n \t \theta \, \de x \, \de t 
\end{multline}
for all $ \theta \in \C^1( \ov\Omega \times [0,T];\R)$ with 
\[
  -\int_0^T \int_\Omega \theta \, \de \xi_n  (t,x)
    \leq \| \theta_-\|_{L^\infty(\Omega\times (0,T))}\left[  \zeta 
    - \int_0^T \int_\Omega| \nabla \ln \sigma_n |^2 \, \de x \, \de t  \right],
\]
where $\zeta$ is independent of $n$ as the initial data
are fixed. 
Now, a comparison in~\eqref{meastime} allows us to find an estimate for the 
time derivatives of~$\{ \ln \sigma_n\}$ via
\begin{align*}
   & - \int_0^T \langle \t \ln \sigma_n , \theta \rangle \, \de t 
    := -\langle \ln \sigma_n ,\theta \rangle  \Big |_0^T
      + \int_0^T\int_\Omega \ln \sigma_n \t \theta \, \de x \, \de t \\
   & \quad =  - \int_0^T \int_\Omega \theta | \nabla \ln \sigma_n|^2 
   - \nabla \ln \sigma_n \cdot \nabla \theta - \chi \theta \nabla\ln \sigma_n \cdot \nabla \varphi _n
   + \chi \nabla \varphi _n \cdot \nabla \theta \, \de x \, \de t \\
    &\qquad\quad- \int_0^T\int_{\ov\Omega} \theta \, \de \xi_n  (t,x)
     +  \int_0^T\int_\Omega \alpha(\varphi_n,\sigma_n) \theta \,  \de x \, \de t  \\
    & \quad \leq \left[\int_0^T \int_\Omega (1+\chi)|\nabla \ln \sigma_n|^2
     + \chi|\nabla \varphi_n|^2 \, \de x\, \de t+ \| \xi_n\|_{\mathcal{M}(\ov\Omega\times [0,T])}
      + \bar \alpha T|\Omega|\right]\| \theta \|_{L^\infty(\Omega\times (0,T))}  \\
    &\quad \quad \quad 
    +  \left[ \| \nabla \ln \sigma_n\|_{L^2(0,T;H)}
    + \chi T \| \nabla \varphi_n\|_{L^\infty(0,T;H)}\right]
    \| \nabla \theta\|_{L^2(0,T;H)}.
\end{align*}
The above relation implies that 
\begin{align*}
    \| \t \ln \sigma _n \|_{\mathcal{M}([0,T]; (W^{1,p}(\Omega))^*)} \leq C 
     \quad\text{for }\,p>d. 
\end{align*}
with $c>0$ independent of $n$. 
\smallskip

Having now all the a-priori estimates corresponding to \eqref{reg:mu}-\eqref{reg:nonl}
at our disposal, we now see that it is possible to pass to the limit 
as (a subsequence of) $n$ goes to $\infty$ and obtain 
that the ``limit'' triplet $(\fhi,\mu,\sigma)$ is still a weak solution 
in the sense of Def.~\ref{def:weak}.
Actually, most of the procedure closely follow the arguments
performed in the previous section for taking the limit $\epsi\searrow 0$
in the approximation scheme, and some parts of the argument 
are, in fact, even simpler as here we do not need to take care of 
the technical complications involving the nonlinear sensitivity. 

For this reason, we will just focus on the parts which involve some novelty.
First of all, we observe that the uniform integrability of (the negative 
part of) $\ln\sigma_n$, i.e., condition \eqref{add:ln},
is {\sl not}\/ a consequence of the energy and entropy relations.
Moreover, it cannot be reproduced (at least in a rigorous way) without referring
to a suitable regularization. Actually, the test function 
$\gamma(\ln \sigma_n)$ used before (where 
$\gamma$ is defined in \eqref{defi:gamma}), cannot be inserted in 
relation \eqref{weakentropy} due to its poor regularity properties. 
This is why \eqref{add:ln} is taken as an additional assumption
in the statement of Prop.~\ref{prop:stab}. Using \eqref{add:ln},
we can actually deduce the analogue of \eqref{co:14d},
which improves \eqref{ln:11} and excludes that the limit of 
$\ln\sigma_n$ could exhibit concentration phenomena.

With this information at disposal it is apparent that we can take 
the limit in equations~\eqref{weak:phi}-\eqref{weak:mu} and in the 
initial and boundary conditions \eqref{bou:fhi} as well. Hence, 
to conclude the proof,
it is sufficient to show that also the entropy and energy inequalities 
\eqref{weakentropy} and \eqref{enin} are stable with respect to $n\nearrow\infty$,
or, in other words, that they are still satisfied in the limit by the ``cluster''
solution $(\fhi,\mu,\sigma)$. 

Starting with the energy inequality \eqref{enin} (where we recall that the
real-valued numbers $Z_n$ are bounded independent of $n$ as the initial data are fixed), 
it is easy to see that the estimates obtained before and the very same semicontinuity
argument performed in Subsec.~\ref{subsec:enen} imply that it is possible to take 
$n\nearrow \infty$ therein and obtain an analogous inequality in the limit.
Finally, we need to show that the entropy inequality is also stable with
respect to $n$. Therefore, we again observe that the sequence $ \{ \zeta_n\} $
is a sequence of bounded numbers such that $ \zeta_n \to \zeta $ for a 
suitable subsequence. The limit passage in inequality~\eqref{weakentropy}
follows now in the same way as in Subsection~\ref{subsec:lim}. 
\end{proof}




\section{Existence of local-in-time strong solutions}
\label{sec:local}

In this part, we prove Theorem~\ref{thm:local}. Before that, however, we prefer to give a number of 
remarks aimed at clarifying the ``spirit'' of the local existence result.
\begin{remark}\label{rem:reg0}
 In the statement we implicitly assumed $d=3$, 
 and, for simplicity, also the proof will be carried out in 
 the three-dimensional setting. As already noticed, the 
 only notable difference in the case $d=2$ is represented
 by the (better) Sobolev exponents. 
\end{remark}
\begin{remark}\label{rem:reg1}
 It is worth observing that the regularity properties
 \eqref{strong:fhi}-\eqref{strong:mu},
 as well as the underlying assumption \eqref{init:reg1}, correspond, 
 in the Cahn-Hilliard terminology, to the outcome of the so-called ``second energy  estimate'',
 a procedure which is fully compatible with the  singular potential \eqref{Flog} and which, 
 as first observed in \cite{MiranvilleM2AN2004}, provides the maximal regularity available 
 so far for this type of system. Indeed, the main obstacle preventing the validity
 of a {\sl global}\/ regularity estimate comes from the supercritical 
 character of the quadratic cross diffusion term in 
 \eqref{eq:2} and not from the structure of system \eqref{eq:1}. 
\end{remark}
\begin{remark}\label{rem:reg2}
 As far as one looks for a {\sl local in time}\/ result, even in presence of 
 a singular potential of the form \eqref{Flog}, it is still possible to 
 further improve the regularity of $\fhi$, provided that the initial
 datum satisfies the following ``separation property":
 \begin{equation}\label{sep:prop}
   -1+\delta \le \fhi_0(x) \le 1-\delta
    \quad\text{for every }\, x\in \Omega\ \ \text{and some }\,
    \delta\in(0,1/2).
 \end{equation}  
 Actually, from the second of \eqref{strong:fhi}, one may deduce
 \begin{equation}\label{sep:prop2}
   \| \fhi(t) - \fhi_0 \|_V \le t^{1/2} \| \fhi_t \|_{L^2(0,T_0;V)}
    \le C_0 t^{1/2},
 \end{equation}  
 for every $t \in [0,T_0]$, where the constant $C_0$ only depends on the 
 $L^2(0,T_0;V)$-norm of $\fhi_t$. Combining the above with 
 the third of \eqref{strong:fhi} and using interpolation it
 is not difficult to infer
 \begin{equation}\label{sep:prop3}
   \| \fhi(t) - \fhi_0 \|_{L^\infty(\Omega)} \le C_1 t^{\eta},
 \end{equation}  
 where $\eta\in(0,1/2)$ is a computable exponent and $C_1$ 
 only depends on known quantities.  Recalling 
 \eqref{sep:prop}, we then readily deduce 
 \begin{equation}\label{sep:prop2b}
   -1+\delta/2 \le \fhi(x,t) \le 1-\delta/2
    \quad\text{for every }\, (x,t)\in \Omega \times [0,T_1],
 \end{equation}  
 where $T_1\in (0,T_0]$ is a computable time. Once the above 
 ``local separation property'' is achieved, $F'$ basically
 loses its singular character and can be treated as a ``smooth''
 function, which essentially implies that, over $(0,T_1)$, the regularity
 of the local solution might be improved at will, provided of course
 that the initial data are taken smooth enough.
\end{remark}
\begin{proof}
In the following part, we sketch the proof of Theorem~\ref{thm:local}. For the 
sake of brevity, we shall derive a number of additional a-priori estimates
by working directly on the equations of system \eqref{eq}
without referring to any specific regularization or approximation scheme. We actually 
believe that such a procedure, though formal, is more suitable for emphasizing
the real difficulties of the procedure without insisting on technicalities.
We also implicitly assume that the a-priori estimates already proved for 
weak solutions keep holding in the current setting, and we will use them
whenever needed. 

That said, we start with testing \eqref{eq:2} by $\sigma$ so to obtain
\begin{equation}\label{giu:b1}
  \frac12 \ddt \| \sigma \|^2 
   + \| \nabla \sigma \|^2 
   = \chi \io \sigma \nabla \sigma \cdot \nabla \fhi \, \de x
   + \io \alpha(\fhi,\sigma) \sigma^2 \, \de x.
\end{equation}
In order to control the first term on the right-hand side, we 
may observe that
\begin{align}\nonumber
  \chi \io \sigma \nabla \sigma \cdot \nabla \fhi \, \de x
   & \le c \| \sigma \|_{L^4(\Omega)} \| \nabla \sigma \| \| \nabla \fhi \|_{L^4(\Omega)}\\
 \nonumber
 & \le c \| \sigma \|_{L^6(\Omega)}^{3/4} \| \sigma \|^{1/4} \| \nabla \sigma \| 
  \| \fhi \|_{L^\infty(\Omega)}^{1/2} \| \fhi \|_{H^2(\Omega)}^{1/2}\\
 \label{giu:b2}
 & \le c \| \sigma \|_{V}^{7/4} \| \sigma \|^{1/4} 
    \| \fhi \|_{H^2(\Omega)}^{1/2},
\end{align}
where we have used elementary interpolation of $L^p$-spaces and a well-known
Gagliardo-Nirenberg inequality. In order to control the last term, the simplest 
way consists in testing the second of \eqref{eq:1} by $-\Delta\fhi$ so to
obtain
\begin{equation}\label{giu:b3}
  \| \Delta \fhi \|^2 
   \le \lambda \| \nabla \fhi \|^2
   - \chi \io \sigma \Delta\fhi \, \de x
   + \io \nabla \mu \cdot \nabla\fhi \, \de x,
\end{equation}
where we have integrated by parts the semilinear term and used the $\lambda$-monotonicity
of $F'$ (we have already observed that this argument works in spite of the ``nonsmoothness''
of $F'$). Hence, using \eqref{reg:phi} and well-known elliptic regularity results,
it is not difficult to deduce
\begin{equation}\label{giu:b4}
  \| \fhi \|^2_{H^2(\Omega)}
   \le c \big( 1 + \| \Delta \fhi \|^2 \big)
   \le c \big( 1 + \| \sigma \|^2 \big)
   + c \| \nabla \mu \|.
\end{equation}
Replacing the above into \eqref{giu:b2}, we obtain
\begin{equation}\label{giu:b5}
  \chi \io \sigma \nabla \sigma \cdot \nabla \fhi \, \de x 
   \le c \| \sigma \|_{V}^{7/4} \| \sigma \|^{1/4} 
       \big( 1 + \| \sigma \|^{1/2} + \| \nabla \mu \|^{1/4} \big).
\end{equation}
Plugging this relation into \eqref{giu:b1}, employing the boundedness of $\alpha$
(cf.\ Hypothesis~\ref{hypo:weak}), and adding $\| \sigma \|^2$ to both-hand sides
so to recover the full $V$-norm on the left-hand side, we deduce
\begin{equation}\label{giu:b6}
 \frac12 \ddt \| \sigma \|^2 
   + \| \sigma \|^2_V
   \le c \| \sigma \|_{V}^{7/4} \| \sigma \|^{1/4} 
       \big( 1 + \| \sigma \|^{1/2} + \| \nabla \mu \|^{1/4} \big)
  + c \| \sigma \|^2.
\end{equation}
Let us now set $2 Y(t):=\| \sigma(t) \|^2$,
$m(t):=\| \nabla \mu (t) \|^2$ and notice that, rearranging
terms, \eqref{giu:b6} implies the following differential inequality:
\begin{align}\nonumber
  & Y'(t) + \| \sigma(t) \|^2_V \\
 \label{giu:b7}
  & \mbox{}~~~~~ \le c \big( \| \sigma(t) \|_{V}^{7/4} Y(t)^{1/8}
   + \| \sigma(t) \|_{V}^{7/4} Y(t)^{3/8}
   + \| \sigma(t) \|_{V}^{7/4} Y(t)^{1/8} m(t)^{1/8}
   + Y(t) \big).
\end{align}
Then, applying Young's inequality, we infer
\begin{equation}\label{giu:b7b}
 Y'(t) + \frac12 \| \sigma(t) \|^2_V 
   \le c \big( 1  + Y^3(t) \big) + \tilde{c} m(t) Y(t),
\end{equation}
where $\tilde{c}>0$ behaves like the generic $c$. Let us now set 
\begin{equation}\label{giu:b8}
   M(t):= \tilde{c} \int_0^t m(s) \, \de s
\end{equation}
and observe that, by \eqref{reg:mu},
\begin{equation}\label{giu:b9}
  \kappa \le e^{-M(t)} \le 1 \quad\text{for all }\,t\in[0,T]
\end{equation}
and for some $\kappa\in(0,1]$ depending only on the problem data.
Then, setting $Z(t):=e^{-M(t)}Y(t)$, it is not difficult to deduce 
from \eqref{giu:b7b} that
\begin{equation}\label{giu:b10}
 Z'(t)
   \le c ( 1  + Z^3(t) ).
\end{equation}
Applying the comparison principle for ODE's, we then deduce
that for a suitable, and computable, $T_0\in(0,T]$ depending on the problem data,
there holds
\begin{equation}\label{loc:11}
  \| \sigma \|_{L^\infty(0,T_0;H)} 
    + \| \sigma \|_{L^2(0,T_0;V)} \le c,
\end{equation}
where the second estimate also considers the dissipative contribution
on the left-hand side of \eqref{giu:b7b}. The above information 
also has some effect on the regularity of $\fhi$. Indeed, going
back to the second \eqref{eq:1}, we may interpret it as a time-dependent
family of elliptic problems; namely, we have 
\begin{equation}\label{giu:b12}
  - \Delta \fhi(t) + \beta(\fhi(t)) =
   h(t) := \lambda\fhi(t) + \mu(t) + \chi \sigma(t),
\end{equation}
where, by Sobolev's embeddings, it is a standard matter 
to verify that $h\in L^2(0,T_0;L^6(\Omega))$ (with the exponent
$6$ replaced by any $P\in[1,\infty)$ in the two-dimensional case).
Then, a well-known elliptic regularity argument implies
\begin{equation}\label{loc:12}
  \| \fhi \|_{L^2(0,T_0;W^{2,6}(\Omega))} \le c.
\end{equation}
To be more precise, the above estimate is obtained by (formally)
testing the second \eqref{eq:1} by $\beta(\fhi)^5$, 
which is a monotone function of its argument as a direct check shows,
so that we can use in particular the analogue of \eqref{giu:a1}.
Then, the procedure implies
that 
\begin{equation}\label{loc:12a}
  \| \beta(\fhi) \|_{L^2(0,T_0;L^6(\Omega))} \le c.
\end{equation}
Then, with the above information at hand, regularity results of Agmon-Douglis-Nirenberg 
type readily entail \eqref{loc:12}. This relation, in turn, can help improve the
regularity of $\sigma$. Indeed, we may rewrite \eqref{eq:2} in the form
\begin{equation}\label{eq:2:rev}
  \sigma_t - \Delta \sigma = \alpha(\fhi,\sigma) \sigma 
   - \chi \sigma \Delta\fhi - \chi \nabla \sigma \cdot \nabla \fhi,
\end{equation}
and we may apply suitable regularity results to the above equation.
In particular, we perform an additional ``parabolic'' estimate
by testing \eqref{eq:2:rev} by $\sigma_t$. Simple calculations then yield
\begin{align}\nonumber
  \| \sigma_t \|^2
   + \frac12 \ddt \| \nabla\sigma \|^2 
   & = \io \alpha(\fhi,\sigma) \sigma \sigma_t \, \de x
    - \chi \io \sigma_t \sigma \Delta\fhi \, \de x 
    - \chi \io \sigma_t \nabla\sigma\cdot \nabla \fhi \, \de x\\
 \label{giu:c1}  
  & =: I_1 + I_2 + I_3.
\end{align}
Thanks to Sobolev's embeddings, the
terms on the right-hand side can be controlled as follows:
\begin{align}\label{giu:c2}
  | I_1 | & \le c \| \sigma \| \| \sigma_t \|
   \le \frac14 \| \sigma_t \|^2 + c \| \sigma \|^2,\\
 \label{giu:c3}
  | I_2 | & \le c \| \sigma \|_{L^4(\Omega)} \| \sigma_t \| \| \Delta\fhi \|_{L^4(\Omega)}
   \le \frac14 \| \sigma_t \|^2 + c \| \sigma \|_{V}^2 \| \Delta\fhi \|_{L^4(\Omega)}^2,\\
 \label{giu:c4}
  | I_3 | & \le c \| \sigma_t \| \| \nabla\sigma \| \| \nabla \fhi \|_{L^\infty(\Omega)}
   \le \frac14 \| \sigma_t \|^2 + c \| \nabla\sigma \|^2 \| \fhi \|_{W^{2,6}(\Omega)}^2.
\end{align}
Collecting the above computations, \eqref{giu:c1} gives
\begin{equation}\label{giu:c5}
  \frac14 \| \sigma_t \|^2
   + \frac12 \ddt \| \nabla\sigma \|^2 
   \le c \| \sigma \|^2 + c \| \fhi \|_{W^{2,6}(\Omega)}^2 \| \sigma \|_V^2.
\end{equation}
Then, adding to both sides of the above relation the 
inequality 
\begin{equation}\label{giu:c6}
  \frac12 \ddt \| \sigma \|^2
    \le \frac18 \| \sigma_t \|^2
    + c \| \sigma \|^2,
\end{equation}
so to recover the full $V$-norm on the left-hand side (under the time
derivative), and applying
Gr\"onwall's lemma to the resulting relation, it is a standard matter to 
deduce the additional bounds
\begin{equation}\label{giu:c7}
  \| \sigma \|_{H^1(0,T_0;H)}
   + \| \sigma \|_{L^\infty(0,T_0;V)}
   \le c,
\end{equation}
where we have used \eqref{loc:12} and the fact 
$\sigma_0\in V$, which is a consequence of \eqref{init:reg2}.

Thanks to the above estimates, going back to \eqref{eq:2:rev},
we may notice that the last two terms on the right-hand side 
can now be controlled as follows:
\begin{align}\label{giu:c8}
    \| \sigma \Delta \fhi \|_{L^2(0,T_0;H)}
   \le c \| \sigma \|_{L^\infty(0,T_0;L^3(\Omega))}
     \| \Delta\fhi \|_{L^2(0,T_0;L^6(\Omega))}
      \le c,\\
 \label{giu:c9}
  \| \nabla \sigma \cdot \nabla \fhi \|_{L^{2}(0,T_0;H)}
   \le c \| \nabla\sigma \|_{L^\infty(0,T_0;H)}
     \| \nabla\fhi \|_{L^2(0,T_0;L^{\infty}(\Omega))}
      \le c,
\end{align}
where we have also used the continuous embedding 
$W^{2,6}(\Omega)\subset W^{1,\infty}(\Omega)$.
Taking \eqref{giu:c8}-\eqref{giu:c9} into account, comparing
terms and applying elliptic regularity results in \eqref{eq:2:rev},
we deduce
\begin{equation}\label{giu:c7b}
  \| \sigma \|_{L^2(0,T_0;H^2(\Omega))}
   \le c.
\end{equation}
To proceed, we need to observe that properties \eqref{giu:c7}-\eqref{giu:c7b} 
seem not yet sufficient to perform the weak-strong uniqueness argument.
Indeed, the above conditions, at least for $d=3$, do not imply the uniform boundedness 
of $\sigma$, which is used in an essential way in the proof.

In order to further improve the smoothness of solutions, we then need to go back to
the Cahn-Hilliard system and perform the so-called {\sl second energy estimate}.
Namely, we test the first \eqref{eq:1} by $\mu_t$ and sum the result
to the time derivative of the second \eqref{eq:1} tested by $\fhi_t$. 
We omit giving the details since the argument is rather standard
and one can refer to several papers (see, e.g., \cite[Sec.~3]{MiranvilleM2AN2004})
where it is carried out extensively. We just point out that,
in the current regularity setting, the argument may have a formal character,
but, on the other hand, it could be easily justified by approximation. 
In particular, the procedure is fully compatible with the occurrence of the singular 
potential as, in fact, it only relies on the $\lambda$-convexity of $F$. 
Moreover, we observe that the first of \eqref{giu:c7} is sufficient
to manage the coupling term depending on $\sigma$ (indeed, for that aim
what is needed is the $L^2$-regularity of $\sigma_t$ which is part
of \eqref{giu:c7}). That said, using also the condition \eqref{init:reg1}
on the initial data, we deduce
\begin{equation}\label{giu:c10}
  \| \fhi \|_{W^{1,\infty}(0,T_0;V')}
  + \| \fhi \|_{H^1(0,T_0;V)}
   + \| \mu \|_{L^\infty(0,T_0;V)}
   \le c.
\end{equation}
Next, again owing to Sobolev's embeddings, applying elliptic regularity results
to the second \eqref{eq:1}, and using the second of \eqref{giu:c7}, 
we infer
\begin{equation}\label{giu:c11}
  \| \fhi \|_{L^\infty(0,T_0;W^{2,6}(\Omega))}
  + \| \beta(\fhi) \|_{L^\infty(0,T_0;L^6(\Omega))}
   \le c,
\end{equation}
where, as before, the exponents $6$ may be replaced by any $P\in[1,\infty)$ 
in the two-dimensional case. Next, as a consequence of 
\eqref{giu:c10}-\eqref{giu:c11}, we also have
\begin{align}\label{giu:c8b}
  \| \nabla \sigma \cdot \nabla \fhi \|_{L^{4}(0,T_0;L^3(\Omega))}
   \le c \| \nabla\sigma \|_{L^4(0,T_0;L^3(\Omega))}
     \| \nabla\fhi \|_{L^\infty(0,T_0;L^{\infty}(\Omega))}
      \le c,\\
 \label{giu:c9b}
    \| \sigma \Delta \fhi \|_{L^\infty(0,T_0;L^3(\Omega))}
   \le c \| \sigma \|_{L^\infty(0,T_0;L^6(\Omega))}
     \| \Delta\fhi \|_{L^\infty(0,T_0;L^6(\Omega))}
      \le c.
\end{align}
In particular, it may be worth remarking that, when deducing relation \eqref{giu:c8b}, 
we have applied to $\nabla\sigma$ the continuous embedding
\begin{equation}\label{giu:b15}
   L^\infty(0,T_0;H) \cap L^2(0,T_0;V) 
    \subset L^4(0,T_0;L^3(\Omega)).
\end{equation}
Then, by Sobolev's embeddings,
\eqref{init:reg2} implies in particular
\begin{equation}\label{besov:11}
   \sigma_0\in W^{3/2,3}(\Omega)
   \subset B^{3/2}_{3,3}(\Omega)
   \subset B^{3/2}_{3,4}(\Omega)
\end{equation}
(see, e.g., \cite[Thm.~6.2.4 and Thm.~6.4.4]{Bergh}).
The above condition allows us to apply to equation \eqref{eq:2:rev}
inhomogeneous (i.e., of the so-called $L^p$-$L^q$ type, 
here with $p=4$, $q=3$) parabolic regularity results of
Agmon-Douglis-Nirenberg type (see, 
\textit{e.g.,}~\cite[Thm.~2.1]{Denk2} or~\cite{Denk}), 
implying
\begin{equation}\label{giu:c12}
  \| \sigma_t \|_{L^4(0,T_0;L^3(\Omega))}
   + \| \sigma \|_{L^4(0,T_0;W^{2,3}(\Omega))}
   \le c.
\end{equation}
Hence, noting the interpolation property (see, e.g., \cite[Par.~6.4]{Bergh},
for the properties of the real interpolation operator
$(\cdot,\cdot)_{\theta,q}$ in the framework of Sobolev and Besov 
spaces)
\begin{equation}\label{giu:c12x}
  \big( W^{1,4}(0,T_0;L^3(\Omega)), L^4(0,T_0;W^{2,3}(\Omega))\big)_{1/3,q}
    \subset L^\infty(0,T_0;L^\infty(\Omega)),
\end{equation}
which holds for every $q\in(1,\infty)$ thanks to embedding properties of Sobolev
and Besov spaces, from \eqref{giu:c12}

we eventually deduce
\begin{equation}\label{giu:c13}
  \| \sigma \|_{L^\infty(0,T_0;L^\infty(\Omega))}
     \le c.
\end{equation}
This concludes the proof of the theorem as the regularity properties
\eqref{strong:fhi}-\eqref{strong:sigma2} are direct consequences of 
\eqref{giu:c10}, \eqref{giu:c11}, \eqref{giu:c12} and \eqref{giu:c13}.
In particular, it is immediate to check that the summability of the second
space derivatives both of $\fhi$ and of $\sigma$, thanks to suitable 
trace theorems, allows us to interpret both the system equations 
and the boundary conditions in the pointwise sense.

\end{proof}


\section{Relative energy inequality and weak-strong uniqueness}
\label{sec:weakstrong}

In the following argument, using a notation which is rather customary in weak-strong
uniqueness proofs, we shall rename as $(\tp,\tmu,\ts)$ the local strong 
solution provided by Theorem~\ref{thm:local}. Moreover,
we will note as $V_0$ and $V_0'$ the closed subspaces of $V$ and, 
respectively, $V'$, consisting of the function(al)s having zero spatial mean. 
Then, $-\Delta$ (i.e., minus the Neumann Laplacian) 
turns out to be invertible as an operator from $V_0$ to $V_0'$
and we will denote as $(-\Delta)^{-1} : V_0' \to V_0$ its inverse mapping. 
We also point out that, respectively on $V_0$ and $V_0'$, the norms 
\begin{equation}\label{norms0}
  \| v \|_{V_0}^2 := \| \nabla v \|^2, \qquad
  \| \xi \|_{V_0'}^2 := \langle \xi, ( - \Delta)^{-1} \xi \rangle
\end{equation}
are equivalent to the standard norms. We will use the above norms whenever
necessary. 

That said, the {\sl relative energy} $\mathcal R$ is defined as 
\begin{align*}
   \mathcal R( \varphi , \sigma| \ts,\tp ) 
    := \int_\Omega \sigma - \ts-\ts\ln \left ( \frac{ \sigma}{\ts}\right) \de x 
    + \frac{M}{2}\| \varphi - \tp \|_{V'_0}^2,
\end{align*}
and the {\sl relative dissipation}\/ by 
\begin{align*}
   \mathcal{W}( \varphi , \sigma| \ts,\tp ) 
     & := \int_\Omega\ts | \nabla \ln \sigma - \nabla \ln \ts|^2 
       - \chi \ts (\nabla \ln \sigma - \nabla \ln \ts) \cdot (\nabla \varphi - \nabla \tp) \, \de x \\
    & \quad + M\int_\Omega | \nabla \varphi -\nabla \tp |^2 
      + (F'(\varphi)-F'(\tp))(\varphi-\tp) + \lambda | \varphi - \tp|^2 \, \de x,
\end{align*}
where $M>0$ will be chosen later on. 
Then, recalling that $\tmu $ is 
given by $ \tmu := -\Delta \tp + F'(\tp) - \chi \ts$,
we (formally) introduce the operator
\begin{align*}
 \mathcal{A}( \tp , \ts) = \begin{pmatrix}
  \t \tp - \Delta \tmu \\ 
  \t \ts - \Delta \ts - \chi \dive ( \ts \nabla ( 1-\tp)) 
   -\alpha(\tp,\ts) \ts 
\end{pmatrix},
\end{align*}
so that the system can be abstractly reformulated as $\mathcal{A}( \tp , \ts)  = 0$.

With these definitions at hand, we are able to state the relative
energy inequality.
Actually, as $ \varphi(0)= \tp(0)$, the mass conservation
property gives
\begin{equation}\label{stesmed}
  \int_\Omega \varphi (t)\, \de x = \int_\Omega \tp(t)\, \de x
  \quad \text{for all }\,t>0;
\end{equation}
  namely, $ \varphi (t) -\tp(t)\in V_0$ for every $t\in [0,T]$.  
Then, using Ehrling's lemma, we also observe, for later convenience, that
\begin{align}\label{interpolation}
  \| \varphi - \tp\|^2 \leq \eta \| \nabla \varphi - \nabla \tp\|^2 
   + C_{\eta} \| \varphi-\tp\|_{V'_0}^2
\end{align}
for a possibly small constant $\eta>0$ and a correspondingly large constant $C_\eta$. 
\begin{proposition}\label{prop:relen}
 Let $(\varphi, \sigma)$ be a weak solution in the sense of 
 Definition~\ref{def:weak} and $(\tp,\ts)$ a pair of functions 
   satisfying, over the interval $(0,T)$, regularity 
 conditions analogous to~\eqref{weak:reg}, \eqref{strong:fhi}, 
 and \eqref{strong:sigma}-\eqref{strong:sigma2}. 
 %
%
 Then, the {\sl relative energy inequality}
 \begin{multline}
   \frac{\de}{\de t}  \mathcal{R}( \varphi,\sigma | \tp , \ts)
      + \mathcal{W}( \varphi , \sigma  | \tp , \ts  )
  %
   \le  \int_\Omega \left[\alpha(\varphi,\sigma)- \alpha(\tp,\ts)\right] ( \sigma-\ts)
      + \alpha(\tp,\ts) \left[ \sigma-\ts- \ts\ln 
    \Big(\frac{\sigma}{\ts}\Big) \right] \, \de x \\
   + \int_\Omega M \chi (\sigma - \ts) ( \varphi - \tp ) 
    + \lambda M | \varphi - \tp|^2 \, \de x \, 
  \label{relenin}
 \end{multline}
 is fulfilled for a.e. $0\leq  t \leq T$.
 %
\end{proposition}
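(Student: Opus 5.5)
We split the relative energy into its three pieces and compute the time evolution of each separately:
\[
\mathcal R=\int_\Omega \sigma\,\de x-\int_\Omega\ts\,\de x-\int_\Omega \ts\ln\sigma\,\de x+\int_\Omega\ts\ln\ts\,\de x+\frac M2\|\fhi-\tp\|_{V_0'}^2 .
\]
We then combine the results, using the strong solution's equation $\mathcal A(\tp,\ts)=0$ (understood with the regularity \eqref{strong:fhi}, \eqref{strong:sigma}--\eqref{strong:sigma2}) to cancel terms.

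\textbf{Mass terms.} Choosing $\vartheta\equiv1$ in \eqref{weak:sig}, and integrating the strong equation for $\ts$, gives $\ddt\io(\sigma-\ts)\,\de x=\io\alpha(\fhi,\sigma)\sigma-\alpha(\tp,\ts)\ts\,\de x$.

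\textbf{Cross term $-\io\ts\ln\sigma$.} This is the delicate step. We use the pointwise-in-time entropy relation \eqref{entropw} from Lemma~\ref{lem:meas} with $\theta=\ts$, which is admissible since $\ts\in\C([0,T];W^{1,p})$ after a mollification in time. Because of the measure $\xi$, the identity becomes an inequality in the direction we need: $\xi\ge0$ and $\ts\ge0$, so the measure term has the favourable sign. Together with the product rule for $\langle \ln\sigma,\ts\rangle$, which is valid since $\ln\sigma\in\Bv(0,T;(W^{1,p})^*)$ and $\t\ts\in L^4(0,T;L^3)$, this controls $-\ddt\io\ts\ln\sigma$ by
\[
-\io\ts|\nabla\ln\sigma|^2-\nabla\ln\sigma\cdot\nabla\ts-\chi\ts\nabla\ln\sigma\cdot\nabla\fhi+\chi\nabla\fhi\cdot\nabla\ts+\alpha(\fhi,\sigma)\ts\,\de x-\io\ln\sigma\,\t\ts\,\de x,
\]
in an integrated-in-time sense on $(s,t)$. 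Rather than going through Lemma~\ref{lem:invar}, it is simpler to use \eqref{weakentropy} with $\theta=\ts$ directly; then $\theta_-=0$ and the $\zeta$-term disappears.

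\textbf{Strong-solution term $\io\ts\ln\ts$.} Its derivative is $\io\t\ts(\ln\ts+1)\,\de x$. We substitute the strong equation for $\t\ts$ and also test that equation with $\ln\sigma$; this is admissible since $\ln\sigma\in L^2(0,T;V)$ and $\t\ts\in L^3$ in space. Writing $\ts\nabla\ln\ts=\nabla\ts$ and completing squares, all gradient terms should assemble into
\[
-\io\ts|\nabla\ln\sigma-\nabla\ln\ts|^2-\chi\ts(\nabla\ln\sigma-\nabla\ln\ts)\cdot(\nabla\tp-\nabla\fhi)\,\de x
\]
plus a remainder. The remainder is the cross term $\chi\ts(\nabla\ln\sigma-\nabla\ln\ts)\cdot\nabla(\fhi-\tp)$ produced by the mismatch between the weak drift $\nabla\fhi$ and the strong drift $\nabla\tp$. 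Checking that this sign works out so that exactly the combination appearing in $\mathcal W$ is produced is the main bookkeeping obstacle. The source terms collect to $\alpha(\fhi,\sigma)(\sigma-\ts)-\alpha(\tp,\ts)(\sigma-\ts)+\alpha(\tp,\ts)\bigl(\sigma-\ts-\ts\ln(\sigma/\ts)\bigr)$, after adding and subtracting $\alpha(\tp,\ts)\sigma$ and using $\io\alpha(\tp,\ts)\ts\ln\ts$ from the strong equation.

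\textbf{Cahn--Hilliard term.} We test the difference of \eqref{weak:phi} and its strong analogue with $M(-\Delta)^{-1}(\fhi-\tp)$, which is legitimate by \eqref{stesmed}. This gives
\[
\frac M2\ddt\|\fhi-\tp\|_{V_0'}^2=-M\io(\mu-\tmu)(\fhi-\tp)\,\de x .
\]
We then insert \eqref{weak:mu}. The Laplacian part yields $-M\|\nabla(\fhi-\tp)\|^2$; this uses \eqref{reg:Dphi}, \eqref{bou:fhi} and $\fhi\in L^\infty$, so that the integration by parts can be justified via a truncation/Dunford--Pettis argument. The remaining parts yield $-M\io(F'(\fhi)-F'(\tp))(\fhi-\tp)$ and $+M\chi\io(\sigma-\ts)(\fhi-\tp)$. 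We add and subtract $\lambda M|\fhi-\tp|^2$ to match the form of $\mathcal W$ and of the right-hand side of \eqref{relenin}.

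\textbf{Conclusion.} Summing the four contributions yields \eqref{relenin} for a.e.\ $t$, in its integrated form on $(s,t)$; a Lebesgue point argument then gives the differential form. For the cross term, I would first try to work entirely with \eqref{weakentropy} for $\theta=\ts\,\phi(t)$, with $\phi\ge0$ a time cutoff, and pass to pointwise time via Lemma~\ref{lem:invar}.
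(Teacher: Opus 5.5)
Your route is the paper's. You use the entropy inequality \eqref{weakentropy} with $\theta=\ts\phi$, $\phi\ge0$, localized in time by Lemma~\ref{lem:invar}; using $\theta=\ts$ directly only gives the inequality on $(0,T)$, so this is the version you need. You test the strong $\ts$-equation by $\ln\sigma$ and by $\ln\ts+1$. Once the $\io\t\ts$ terms cancel against the mass identity for $\ts$, this is exactly the paper's single test by $\ln\sigma-\ln\ts$. Finally, you use the $V_0'$ estimate for the Cahn--Hilliard part.

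The only correction concerns the step you flag as the main obstacle: there is no remainder. The gradient contributions are $-\ts|\nabla\ln\sigma|^2+\nabla\ln\sigma\cdot\nabla\ts+\chi\ts\nabla\ln\sigma\cdot\nabla\fhi-\chi\nabla\fhi\cdot\nabla\ts$ from the entropy inequality, and $(\nabla\ts-\chi\ts\nabla\tp)\cdot(\nabla\ln\sigma-\nabla\ln\ts)$ from the strong equation. Using $\nabla\ts=\ts\nabla\ln\ts$:
\begin{itemize}
\item the $\chi$-free terms sum to $-\ts|\nabla\ln\sigma-\nabla\ln\ts|^2$;
\item the four $\chi$-terms sum to $+\chi\ts(\nabla\ln\sigma-\nabla\ln\ts)\cdot(\nabla\fhi-\nabla\tp)$.
\end{itemize}
Their total is exactly minus the first line of $\mathcal W$. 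The drift mismatch \emph{is} the cross term of $\mathcal W$, not something extra. Your display, read with the minus sign in front of the whole integral, carries that cross term with the wrong sign. Adding a separate ``remainder'' on top would then either cancel it or double it.
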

\begin{proof}
First of all, we observe that
%
\begin{align}\label{relen}
    \frac{\de}{\de t} \int_\Omega \sigma - \ts-\ts\ln \left ( \frac{ \sigma}{\ts}\right) \, \de x 
    =  \frac{\de}{\de t}\langle  \sigma, 1 \rangle  
        - \langle \t \ln \sigma , \ts\rangle - \int_\Omega \t \ts ( \ln \sigma - \ln \ts ) \, \de x . 
\end{align}
Next, integrating equation~\eqref{weak:sig} over $\Omega $ 
and choosing $\vartheta \equiv 1$, we infer 
\begin{equation}\label{eq:volume}
    \frac{\de}{\de t}  \int_\Omega \sigma \, \de x 
   = \int_\Omega \alpha(\varphi, \sigma) \sigma \, \de x. 
\end{equation}
Now, let us insert~$\theta=\ts \phi$ in~\eqref{weakentropy} 
with $\phi \in \C^1([0,T))$ and $\phi \geq 0$.   To be precise,
it may be worth noting that, under the conditions on $\ts$
\eqref{strong:sigma}-\eqref{strong:sigma}
assumed here, such a function $\theta$ does not properly belong 
to $C^1([0,T];W^{1,p}(\Omega))$, $p>d$, as would required in 
Def.~\ref{def:weak}; on the other hand, one can easily 
prove that the validity of relation \eqref{weakentropy} can be 
extended to test functions $\theta$ with the (weaker)
regularity corresponding to \eqref{strong:sigma}-\eqref{strong:sigma2}.
In addition to that, as noted in Remark~\ref{rem:reg2}, the 
regularity of $\ts$ (hence that of~$\theta=\ts \phi$) could be
further improved at the price of technical complications. 
That said,  we infer via Lemma~\ref{lem:invar} that
\begin{align}\nonumber
   - 
   \langle\t \ln \sigma , \ts\rangle_{W^{1,p}(\Omega)} \, 
   + 
   \int_\Omega \ts | \nabla \ln \sigma|^2 
   - \nabla \ln \sigma \cdot \nabla \ts - \chi \ts \nabla\ln \sigma \cdot \nabla \varphi 
   + \chi \nabla \varphi \cdot \nabla \ts \, \de x &\,
   \\
  \label{weakentropy2}
   \qquad\quad
  + 
  \int_\Omega \alpha(\varphi , \sigma)  \ts 
     \, 
     \de x \, 
    & \leq 0.
\end{align}
Next, we test \eqref{eq:2} (written for $(\ts,\tp)$)  by
$\ln \sigma - \ln \ts$. We note that this test is indeed justified due to 
the additional regularity assumed for $\ts$. Straightforward computations lead to 
\begin{align}\nonumber
   \int_\Omega \t \ts ( \ln \sigma - \ln \ts ) \, \de x 
    + \int_\Omega (\nabla \ts - \chi \ts \nabla \tp)\cdot ( \nabla \ln \sigma - \nabla \ln \ts ) \, \de x &\\
 \label{ws:10}   
   \qquad = \int_\Omega \alpha(\tp,\ts) \ts ( \ln \sigma-\ln \ts) \, \de x.&
\end{align}
Summing~\eqref{eq:volume}  with \eqref{weakentropy2},
subtracting \eqref{ws:10}, and using~\eqref{relen},
we then deduce
\begin{multline}\label{eq:ineqweak1}
  \frac{\de}{\de t}   \int_{\Omega} \sigma -\ts -\ts\ln\frac{\sigma}{\ts}\, \de x 
     + \int_\Omega \ts| \nabla \ln\sigma-\nabla \ln \ts|^2 
         - \chi \ts (\nabla \ln \sigma - \nabla \ln \ts) \cdot (\nabla \varphi -\nabla \tp) \, \de x \\
    \leq \int_\Omega \alpha(\varphi,\sigma) ( \sigma-\ts)- \alpha(\tp,\ts) \ts
      \ln \frac{\sigma}{\ts} \, \de x \\
    = \int_\Omega \left[\alpha(\varphi,\sigma)- \alpha(\tp,\ts)\right] ( \sigma-\ts)
         + \alpha(\tp,\ts) \left[ \sigma-\ts- \ts\ln \frac{\sigma}{\ts} \right] \, \de x.
\end{multline}
Above, we also used the fact that $ \t \ts \ln \ts = \t ( \ts (\ln \ts - 1))$. 
Next, we write \eqref{weak:phi} for $\fhi$ and for $\tp$, take the difference, and choose
$\psi=(-\Delta)^{-1}(\varphi-\tp)$ therein, which is allowed thanks to \eqref{stesmed}.
Then, some manipulations lead to
\begin{equation}\label{ws:11}
  \frac12 \ddt \| \fhi - \tp \|_{V_0'}^2
   + \io (\mu - \tmu) (\varphi-\tp) \, \de x
   = 0.
\end{equation}
Correspondingly, we compute the difference of \eqref{weak:mu} for the weak and the strong 
solution, test it by $\varphi-\tp$, and integrate over $\Omega$ so to obtain
\begin{equation}\label{ws:12}
  \io (\mu - \tmu) (\varphi-\tp) \, \de x 
   = \| \nabla \varphi - \nabla \tp \|^2
   + \io \big( ( F'(\fhi) - F'(\tp) - \chi (\sigma - \ts) \big) (\varphi-\tp) \, \de x .
\end{equation}
Then, replacing \eqref{ws:12} into \eqref{ws:11} we are led to 
\begin{align}\nonumber
   & \frac{1}{2}\frac{\de}{\de t }\| \varphi - \tp\|_{V_0'}^2 
   +
   \int_\Omega | \nabla \varphi - \nabla \tp|^2 
     + ( F'(\varphi)-F'(\tp))(\varphi-\tp) \, \de x \, 
     \\
 \label{ineq:ch}
   & \quad  = \chi 
   \int_\Omega ( \sigma-\ts)(\varphi-\tp) \, \de x \, 
   .
\end{align}
Adding~\eqref{ineq:ch} multiplied by $M$ to~\eqref{eq:ineqweak1} 
and adding   $ M \lambda \| \varphi - \tp\|^2$ 
to both sides, we infer the asserted inequality~\eqref{relenin}. 
\end{proof}
\begin{corollary}\label{cor:weakstrong}
 Let $(\varphi, \sigma)$ be a weak solution in the sense of Definition~\ref{def:weak} 
 and $(\tp,\ts)$ be a strong solution in the sense of Theorem~\ref{thm:local},
 such that $ (\varphi(s),\sigma(s)) = ( \tp(s), \ts(s))$ for some 
 $s \in [0,T_0)$. Then, it holds that 
 $(\varphi(t),\sigma(t)) = ( \tp(t), \ts(t))$ for all $t\in [s,T_0)$. 
\end{corollary}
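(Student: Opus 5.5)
The plan is to integrate the relative energy inequality \eqref{relenin} of Proposition~\ref{prop:relen} over $(s,t)$ and close it by Gr\"onwall's lemma. Since $(\varphi(s),\sigma(s))=(\tp(s),\ts(s))$, we have $\mathcal R(s)=0$, and it suffices to show
\[
  \mathcal R(t)+\tfrac12\int_s^t \mathcal W\,\de\tau \le C\int_s^t \mathcal R\,\de\tau ,
\]
with $C$ depending only on norms of the strong solution on $(0,T_0)$. Then $\mathcal R\equiv 0$ on $[s,T_0)$. Because $r\mapsto r-\tilde r-\tilde r\ln(r/\tilde r)$ is strictly convex with its minimum $0$ at $r=\tilde r$, this gives $\sigma=\ts$, and $\|\varphi-\tp\|_{V_0'}=0$ gives $\varphi=\tp$. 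To start the integration at $s>0$ instead of $0$, we use the version of \eqref{weakentropy} on subintervals (Remark~\ref{rem:weakstrong}) together with Lemma~\ref{lem:invar}, applied with $\theta=\ts\phi$. We interpret $\ln\sigma(s)$ as the right limit (Remark~\ref{rem:pointwise}), so the relative entropy at $s$ is well defined and, by hypothesis, vanishes.

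\textbf{Coercivity of $\mathcal W$.} By Theorem~\ref{thm:local}, $0<\ts\le \|\ts\|_{L^\infty}$. The cross term satisfies
\[
  \chi\ts|\nabla\ln\sigma-\nabla\ln\ts||\nabla\varphi-\nabla\tp|
  \le \tfrac12\ts|\nabla\ln\sigma-\nabla\ln\ts|^2+\tfrac{\chi^2}{2}\|\ts\|_{L^\infty}|\nabla(\varphi-\tp)|^2 .
\]
We take $M$ large, say $M\ge \chi^2\|\ts\|_{L^\infty}+2$. By $\lambda$-convexity, $(F'(\varphi)-F'(\tp))(\varphi-\tp)+\lambda|\varphi-\tp|^2\ge 0$. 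Together these give
\[
  \mathcal W\ge \tfrac12\int_\Omega\ts|\nabla(\ln\sigma-\ln\ts)|^2\,\de x+\tfrac M2\|\nabla(\varphi-\tp)\|^2 .
\]

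\textbf{Right-hand side of \eqref{relenin}.} The term $\alpha(\tp,\ts)[\sigma-\ts-\ts\ln(\sigma/\ts)]$ is bounded by $\ov\alpha$ times the nonnegative relative entropy, hence by $C\mathcal R$. The term $\lambda M|\varphi-\tp|^2$ is handled by \eqref{interpolation}: choosing $\eta$ small, it splits into a piece absorbed by $\tfrac M2\|\nabla(\varphi-\tp)\|^2$ and a piece bounded by $C\|\varphi-\tp\|_{V_0'}^2\le C\mathcal R$.

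\textbf{Main obstacle: the terms linear in $\sigma-\ts$.} These are $M\chi(\sigma-\ts)(\varphi-\tp)$ and $[\alpha(\varphi,\sigma)-\alpha(\tp,\ts)](\sigma-\ts)$. The difference $\sigma-\ts$ is only controlled in $L^1$ by the relative entropy and not quadratically, so I would use Lemma~\ref{lem:fenchel_young} with $u=\sigma$, $\tilde u=\ts$. For the coupling term, take $w=M\chi\varphi$, $\tilde w=M\chi\tp$ and $r=M\chi+1$, using $|\varphi|,|\tp|<1$. This bounds it by
\[
  C\Big(\ts|\varphi-\tp|^2+\sigma-\ts-\ts\ln\tfrac{\sigma}{\ts}\Big)\le C\big(|\varphi-\tp|^2+\text{relative entropy}\big),
\]
and $|\varphi-\tp|^2$ is treated through \eqref{interpolation} as before. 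For the $\alpha$ term, Lemma~\ref{lem:fenchel_young} applies with $w=\alpha(\varphi,\sigma)$, $\tilde w=\alpha(\tp,\ts)$ and $r=\max\{|\underline\alpha|,|\ov\alpha|\}+1$, since $\alpha$ is bounded. It yields $C\ts|\alpha(\varphi,\sigma)-\alpha(\tp,\ts)|^2$ plus relative entropy. By \eqref{ass:alpha},
\[
  \ts|\alpha(\varphi,\sigma)-\alpha(\tp,\ts)|^2\le C\ts|\varphi-\tp|^2+C\ts|\sqrt\sigma-\sqrt{\ts}|^2 .
\]
The first part is handled as before. For the second, I would use the elementary pointwise inequality
\[
  \ts(\sqrt{\sigma}-\sqrt{\ts})^2\le C_{\ts}\,\big(\sigma-\ts-\ts\ln(\sigma/\ts)\big),
\]
which follows by comparing the two sides as functions of $x=\sigma/\ts$. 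Both are $\sim(x-1)^2$ near $x=1$; as $x\to\infty$ the left side grows like $x$ and the right like $x-\ln x$; at $x=0$ both are finite. The resulting constant involves only $\|\ts\|_{L^\infty}$, with no lower bound on $\ts$ needed, because both sides scale linearly in $\ts$.

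This step is where the square-root Lipschitz hypothesis is essential, and it is the delicate point of the proof. Collecting all estimates gives the differential inequality above, and Gr\"onwall's lemma concludes.
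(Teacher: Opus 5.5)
Your proposal is correct and follows essentially the same route as the paper's proof. It integrates \eqref{relenin} over $(s,t)$, obtains coercivity of $\mathcal W$ from Young's inequality with $M$ large plus $\lambda$-convexity, and bounds the coupling and $\alpha$ terms via Lemma~\ref{lem:fenchel_young} and \eqref{ass:alpha}. It then absorbs $\|\varphi-\tp\|^2$ through \eqref{interpolation} and concludes by Gr\"onwall. The only difference is cosmetic: the paper gets $(\sqrt\sigma-\sqrt{\ts})^2\le \sigma-\ts-\ts\ln(\sigma/\ts)$ with constant $1$ directly from the convexity of $u\mapsto e^{u/2}$, whereas your comparison in $x=\sigma/\ts$ reaches the same bound. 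In that comparison, the right-hand side actually blows up as $x\to0^+$ rather than being finite, which only helps.
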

\begin{proof}
First of all, let us choose 
$M := \max\{ 1, \chi^2 \| {\ts}\|_{L^\infty(\Omega\times (0,T))} \}$. 
Then, it is a standard matter to check that
$ \mathcal{W}$ is nonnegative. More precisely, by Young's inequality, we have 
\begin{align}
  & \int_\Omega\ts | \nabla \ln \sigma - \nabla \ln \ts |^2 
     + \chi \ts (\nabla \ln \sigma - \nabla \ln \ts) \cdot (\nabla \varphi - \nabla \tp) 
       + M | \nabla \varphi -\nabla \tp|^2 \, \de x\notag  \\
 \label{nonnegative} 
  & \qquad\quad  \geq \frac 1 2 \int_\Omega\ts | \nabla \ln \sigma - \nabla \ln \ts|^2 
  + M | \nabla \varphi -\nabla \tp|^2 \, \de x.
\end{align}
Then, applying Lemma~\ref{lem:fenchel_young}, we get that 
\begin{equation}\label{wsu:11}
        (\sigma-\ts)(\varphi-\tp)\leq 4 \left( \ts | \varphi - \tp| ^2 
          + \sigma - \ts - \ts\ln \left(\frac{\sigma}{\ts}\right)\right),
\end{equation}
as well as 
\begin{align}\nonumber
  & \left[\alpha(\varphi,\sigma)- \alpha(\tp,\ts)\right] ( \sigma-\ts) \\
 \nonumber
  & \qquad \leq 4\max \left\{ 1 , | \ov\alpha |, | \underline\alpha | \right\} 
         \left( \ts\left[\alpha(\varphi,\sigma)- \alpha(\tp,\ts)\right]^2 
         + \sigma - \ts- \ts \ln \frac{\sigma}{\ts}  \right) \\
 \label{wsu:12}
  & \qquad\quad \leq  \CA \left( | \varphi -\tp|^2 
      + | \sqrt{\sigma }- \sqrt{\ts}|^2
    + \sigma - \ts- \ts \ln \frac{\sigma}{\ts}  \right),
\end{align}
where \eqref{ass:alpha} has been used, and the constant(s) $\CA$ 
(whose value is allow to vary on occurrence) may also depend on 
the $L^\infty$-norm of $\ts$. 

  Now, noting that the estimate of the right-hand side of \eqref{wsu:12}
is straighforward,  we may insert \eqref{wsu:11} and \eqref{wsu:12}
into \eqref{relenin}. Hence, in view of estimate~\eqref{interpolation}, 
it only remains to estimate the difference of the square-roots 
of $\sigma$ and $\ts$. To this end, we first observe that the term 
$ \Lambda (u | \tilde u ) :=  e ^ {\frac{1}{2}u} - e^{\frac{1}{2}\tilde u }
   - \frac{1}{2}e^{\frac{1}{2}\tilde{u}} (u - \tilde u )$ 
is nonegative for all $u,\tilde{u} \in \R$ since $x\to e^{\frac{1}{2}x}$ is a convex 
function and secondly that 
$$
  0\leq  \Lambda(\ln \sigma | \ln \ts) 
    = \sqrt{\sigma} - \sqrt{\ts} - \frac{1}{2} \sqrt{\ts}(\ln \sigma- \ln \ts).
$$
By elementary considerations, we then obtain
\begin{align}\label{wsu:13}
  (\sqrt{\sigma}-\sqrt{\ts})^2 \leq  \sigma - 2 \sqrt{\sigma}\sqrt{\ts} + \ts 
    + 2\sqrt{\ts} \Lambda(\ln \sigma| \ln \ts) = \sigma - \ts - \ts (\ln \sigma - \ln \ts) ,
\end{align}
which can now be used to  estimate the difference of 
the square roots of $\sigma$ and $\ts$ appropriately. 
 
Indeed, under the assumptions of this corollary,
we find by integrating~\eqref{relenin} over $(s,t)$ 
and using~\eqref{wsu:13} that
\begin{multline}\label{releninest2}
  \mathcal{R}( \varphi,\sigma | \tp , \ts) \Big |_s^t 
    + \int_s^t \mathcal{W}( \varphi , \sigma  | \tp , \ts  ) \, \de \tau \\
    \leq \int_s^t \int_\Omega C_{\alpha,M,1}
    \Big( \sigma - \ts - \ts \ln \Big(\frac{\sigma}{\ts} \Big) \Big)
     + C_{\alpha,M,2} | \varphi - \tp|^2 \, \de x \, \de \tau, 
\end{multline}
  for suitable constants $C_{\alpha,M,1},C_{\alpha,M,2}>0$.  
Then, applying once more Ehrling's lemma, we deduce
$$
   C_{\alpha,M,2} \| \fhi - \tp \| ^2
     \leq \frac{M}{4} \int_{\Omega} | \nabla \varphi -\nabla \tp|^2 \, \de x
      + C_{\alpha,M,3} \| \varphi - \tp \|_{V'_0}^2 .
$$
Inserting this and~\eqref{nonnegative} into~\eqref{releninest2} implies 
\begin{equation*}
   \mathcal{R}( \varphi,\sigma | \tp , \ts) \Big |_s^t 
    + \frac{1}{4} \int_s^t \int_\Omega\ts | \nabla \ln \sigma - \nabla \ln \ts|^2  
        + M | \nabla \varphi -\nabla \tp|^2 \, \de x \, \de \tau 
     \leq C \int_s^t \mathcal{R}( \varphi,\sigma | \tp , \ts) \, \de \tau,
\end{equation*}
so that the assert follows directly from Gr\"onwall's lemma. 
\end{proof}



\section*{Acknowledgment}

ER and GS gratefully acknowledge  the support of the GNAMPA (Gruppo Nazionale per l’Analisi Matematica, la Probabilit\`a
e le loro Applicazioni) of INdAM (Istituto Nazionale di Alta Ma\-te\-ma\-ti\-ca).
ER also acknowledges the support of Next Generation EU Project number~P2022Z7ZAJ 
(“A unitary mathematical framework for modelling muscular dystrophies'').
RL acknowledges funding by the Deutsche Forschungsgemeinschaft (DFG, German Research Foundation) 
within SPP 2410 “Hyperbolic Balance Laws in Fluid Mechanics: Complexity, Scales, Randomness (CoScaRa)'',
project number 526018747 and the hospitality of the Erwin--Schrödinger institute in Vienna during the thematic semester on free boundary problems.






\bibliographystyle{abbrv}

\begin{thebibliography}{10}

\bibitem{agg}
H.~Abels, H.~Garcke, and G.~Gr\"un.
\newblock Thermodynamically consistent, frame indifferent diffuse interface models for incompressible two-phase flows with different densities.
\newblock \textit{ Math. Models Methods Appl. Sci.}, 22(3):1150013, 40, 2012.

\bibitem{AS}
A.~Agosti and A.~Signori.
\newblock Analysis of a multi-species {Cahn}-{Hilliard}-{Keller}-{Segel} tumor growth model with chemotaxis and angiogenesis.
\newblock \textit{ J. Differ. Equations}, 403:308--367, 2024.

\bibitem{barbu}
V.~Barbu.
\newblock \textit{ Nonlinear semigroups and differential equations in {B}anach spaces}.
\newblock Editura Academiei Republicii Socialiste Rom\^ania, Bucharest; Noordhoff International Publishing, Leiden, 1976.
\newblock Translated from the Romanian.

\bibitem{Bergh}
J.~Bergh and J.~L{\"o}fstr{\"o}m.
\newblock \textit{ Interpolation spaces. {An} introduction}, volume 223 of \textit{ Grundlehren Math. Wiss.}
\newblock Springer, Cham, 1976.

\bibitem{brezis}
H.~Brezis.
\newblock \textit{ Functional analysis, {Sobolev} spaces and partial differential equations}.
\newblock Universitext. New York, NY: Springer, 2011.

\bibitem{DD}
A.~Debussche and L.~Dettori.
\newblock On the {C}ahn-{H}illiard equation with a logarithmic free energy.
\newblock \textit{ Nonlinear Anal.}, 24(10):1491--1514, 1995.

\bibitem{Demengel}
F.~Demengel and G.~Demengel.
\newblock \textit{ Functional spaces for the theory of elliptic partial differential equations. {Transl}. from the {French} by {Reinie} {Ern{\'e}}}.
\newblock Universitext. Berlin: Springer, 2012.

\bibitem{Denk}
R.~Denk, M.~Hieber, and J.~Pr{\"u}ss.
\newblock \textit{ {$\mathcal{R}$}-boundedness, {Fourier} multipliers and problems of elliptic and parabolic type}, volume 788 of \textit{ Mem. Am. Math. Soc.}
\newblock Providence, RI: American Mathematical Society (AMS), 2003.

\bibitem{Denk2}
R.~Denk, M.~Hieber, and J.~Pr{\"u}ss.
\newblock Optimal {{\(L^{p}\)}}- {{\(L^{q}\)}}-estimates for parabolic boundary value problems with inhomogeneous data.
\newblock \textit{ Math. Z.}, 257(1):193--224, 2007.

\bibitem{Orlicz}
L.~Diening, P.~Harjulehto, P.~H\"ast\"o, and M.~Ru{\v{z}}i{\v{c}}ka.
\newblock \textit{ Lebesgue and {S}obolev spaces with variable exponents}, volume 2017 of \textit{ Lecture Notes in Mathematics}.
\newblock Springer, Heidelberg, 2011.

\bibitem{hyper}
T.~Eiter and R.~Lasarzik.
\newblock Existence of energy-variational solutions to hyperbolic conservation laws.
\newblock \textit{ Calc. Var. Partial Differ. Equ.}, 63(4):40, 2024.
\newblock Id/No 103.

\bibitem{EL18}
E.~Emmrich and R.~Lasarzik.
\newblock Weak-strong uniqueness for the general {Ericksen-Leslie} system in three dimensions.
\newblock \textit{ Discrete and Continuous Dynamical Systems}, 38(9):4617--4635, 2018.

\bibitem{FLR17}
S.~Frigeri, K.~F. Lam, and E.~Rocca.
\newblock On a diffuse interface model for tumour growth with non-local interactions and degenerate mobilities.
\newblock In \textit{ Solvability, regularity, and optimal control of boundary value problems for PDEs. In honour of Prof. Gianni Gilardi}, pages 217--254. Cham: Springer, 2017.

\bibitem{GL17}
H.~Garcke and K.~F. Lam.
\newblock Analysis of a {Cahn-Hilliard} system with non-zero {Dirichlet} conditions modeling tumor growth with chemotaxis.
\newblock \textit{ Discrete and Continuous Dynamical Systems}, 37(8):4277--4308, 2017.

\bibitem{GL17bis}
H.~Garcke and K.~F. Lam.
\newblock Well-posedness of a {C}ahn-{H}illiard system modelling tumour growth with chemotaxis and active transport.
\newblock \textit{ European Journal of Applied Mathematics}, 28(2):284–316, 2017.

\bibitem{GT}
H.~Garcke and D.~Trautwein.
\newblock Numerical analysis for a {C}ahn-{H}illiard system modelling tumour growth with chemotaxis and active transport.
\newblock \textit{ J. Numer. Math.}, 30(4):295--324, 2022.

\bibitem{CahnHilliardOono}
A.~Giorgini, M.~Grasselli, and A.~Miranville.
\newblock The {Cahn}-{Hilliard}-{Oono} equation with singular potential.
\newblock \textit{ Math. Models Methods Appl. Sci.}, 27(13):2485--2510, 2017.

\bibitem{GHW}
A.~Giorgini, J.~He, and H.~Wu.
\newblock Global weak solutions to a {N}avier–{S}tokes–{C}ahn–{H}illiard system with chemotaxis and mass transport: Cross diffusion versus logistic degradation.
\newblock \textit{ Math. Mod. and Meth. Appl. Sci.}, 36(01):57--110, 2026.

\bibitem{Luisa}
D.~H{\"o}mberg, R.~Lasarzik, and L.~Plato.
\newblock On the existence of generalized solutions to a spatio-temporal predator-prey system with prey-taxis.
\newblock \textit{ J. Evol. Equ.}, 23(1):44, 2023.
\newblock Id/No 20.

\bibitem{KS}
E.~Keller and L.~Segel.
\newblock Model for chemotaxis.
\newblock \textit{ J. Theoret. Biol.}, 30:225--234, 1971.

\bibitem{KNP}
N.~Kenmochi, M.~Niezg\'odka, and I.~Paw\l{o}w.
\newblock Subdifferential operator approach to the {C}ahn-{H}illiard equation with constraint.
\newblock \textit{ J. Differential Equations}, 117(2):320--356, 1995.

\bibitem{envar}
R.~Lasarzik.
\newblock On the existence of energy-variational solutions in the context of multidimensional incompressible fluid dynamics.
\newblock \textit{ Math. Methods Appl. Sci.}, 47(6):4319--4344, 2024.

\bibitem{LRR24}
R.~Lasarzik, E.~Rocca, and R.~Rossi.
\newblock Existence and weak-strong uniqueness for damage systems in viscoelasticity.
\newblock \textit{ Nonlinearity}, 38:125016, 2024.

\bibitem{ourpreviousone}
R.~Lasarzik, E.~Rocca, and G.~Schimperna.
\newblock Weak solutions and weak-strong uniqueness for a thermodynamically consistent phase-field model.
\newblock \textit{ Atti Accad. Naz. Lincei, Cl. Sci. Fis. Mat. Nat., IX. Ser., Rend. Lincei, Mat. Appl.}, 33(2):229--269, 2022.

\bibitem{MaMi2009}
A.~Mainik and A.~Mielke.
\newblock Existence results for energetic models for rate-independent systems.
\newblock \textit{ Calc. Var. Partial Differential Equations}, 22(1):73--99, 2005.

\bibitem{MAIMS}
A.~Miranville.
\newblock The {C}ahn–{H}illiard equation and some of its variants.
\newblock \textit{ AIMS Mathematics}, 2(3):479--544, 2017.

\bibitem{MiranvilleM2AN2004}
A.~Miranville and S.~Zelik.
\newblock Robust exponential attractors for {Cahn}-{Hilliard} type equations with singular potentials.
\newblock \textit{ Math. Methods Appl. Sci.}, 27(5):545--582, 2004.

\bibitem{RSS}
E.~Rocca, G.~Schimperna, and A.~Signori.
\newblock On a {C}ahn-{H}illiard-{K}eller-{S}egel model with generalized logistic source describing tumor growth.
\newblock \textit{ J. Differential Equations}, 343:530--578, 2023.

\bibitem{RoubicekBook}
T.~Roub{\'{\i}}{\v{c}}ek.
\newblock \textit{ Nonlinear partial differential equations with applications}, volume 153 of \textit{ ISNM, Int. Ser. Numer. Math.}
\newblock Basel: Birkh{\"a}user, 2nd ed. edition, 2013.

\bibitem{GiulioNew}
G.~Schimperna.
\newblock On a modified {Cahn}-{Hilliard}-{Brinkman} model with chemotaxis and nonlinear sensitivity.
\newblock Preprint, {arXiv}:2411.12505 [math.{AP}] (2024), 2024.

\bibitem{SP}
G.~Schimperna and I.~Paw\l~ow.
\newblock On a class of {C}ahn-{H}illiard models with nonlinear diffusion.
\newblock \textit{ SIAM J. Math. Anal.}, 45(1):31--63, 2013.

\bibitem{SchimpSeg}
G.~Schimperna and A.~Segatti.
\newblock Global attractor for a cahn-hilliard-chemotaxis model with logistic degradation.
\newblock Preprint, {arXiv}:2511.11363 [math.{AP}] (2025), 2025.

\bibitem{SimonComp}
J.~Simon.
\newblock Compact sets in the space {{\({L}^ p(0,{T};{B})\)}}.
\newblock \textit{ Ann. Mat. Pura Appl. (4)}, 146:65--96, 1987.

\end{thebibliography}

\end{document}